\newfont{\msbm}{msbm10 at 11pt}
\newcommand {\R} {\mbox{\msbm R}}
\def\eps{\varepsilon}
\def\MR#1{\href{http://www.ams.org/mathscinet-getitem?mr=#1}{MR#1}}
\newtheorem{Theo}{Theorem}
\newtheorem{Lemma}[Theo]{Lemma}
\newtheorem{Cor}[Theo]{Corollary}
\newtheorem{Prop}[Theo]{Proposition}
\newtheorem{Rmk}[Theo]{Remark}
\newcommand{\qedwhite}{\hfill \ensuremath{\Box}}
\newcounter{cnstcnt}
\newcommand{\newC}{%
\refstepcounter{cnstcnt}%
\ensuremath{C_{\thecnstcnt}}}
\newcommand{\oldC}[1]{\ensuremath{C_{\ref{#1}}}}
\newcounter{cnstcnta}
\newcommand{\newA}{%
\refstepcounter{cnstcnta}%
\ensuremath{A_\thecnstcnta}}
\newcommand{\oldA}[1]{\ensuremath{A_{\ref{#1}}}}
\newcounter{cnstcntx}
\newcommand{\newx}{%
\refstepcounter{cnstcntx}%
\ensuremath{\xi_\thecnstcntx}}
\newcommand{\oldx}[1]{\ensuremath{\xi_{\ref{#1}}}}
\newcommand{\footremember}[2]{%
    \footnote{#2}
    \newcounter{#1}
    \setcounter{#1}{\value{footnote}}%
}
\begin{document}
\title{Phase transition of the consistent maximal displacement of branching Brownian motion}
\author{Julien Berestycki \footremember{JB}{University of Oxford, E-mail: julien.berestycki@stats.ox.ac.uk}
\and Jiaqi Liu \footremember{JL}{University of Pennsylvania, E-mail: liujiaqi@sas.upenn.edu}
\and Bastien Mallein, \footremember{BM}{Institut de Mathématiques de Toulouse, UMR 5219, Université de Toulouse, UPS, F-31062 Toulouse Cedex 9, France. Email: {bastien.mallein@math.univ-toulouse.fr}}
\and Jason Schweinsberg, \footremember{JS}{University of California San Diego, Email: jschweinsberg@ucsd.edu}
}

\maketitle

\begin{abstract}
Consider branching Brownian motion in which we begin with one particle at the origin, particles independently move according to Brownian motion, and particles split into two at rate one.  It is well-known that the right-most particle at time $t$ will be near $\sqrt{2} t$.  Roberts considered the so-called consistent maximal displacement and showed that with high probability, there will be a particle at time $t$ whose ancestors stayed within a distance $ct^{1/3}$ of the curve $s \mapsto \sqrt{2} s$ for all $s \in [0, t]$, where $c = (3 \pi^2)^{1/3}/\sqrt{2}$.  We consider the question of how close the trajectory of a particle can stay to the curve $s \mapsto (\sqrt{2} + \eps) s$ for all $s \in [0, t]$, where $\eps > 0$ is small.  We find that there is a phase transition, with the behavior changing when $t$ is of the order $\eps^{-3/2}$.  This result allows us to determine, for branching Brownian motion in which particles have a drift to the left of $\sqrt{2} + \eps$ and are killed at the origin, the position at which a particle needs to begin at time zero for there to be a high probability that the process avoids extinction until time $t$.

\end{abstract}

{\small MSC: Primary 60J80; Secondary: 60J65, 60J25

Keywords: Branching Brownian motion, consistent maximal displacement}

\section{Introduction}
The ordinary branching Brownian motion (BBM) starts with one particle at the origin. This particle has an exponentially distributed lifetime with rate 1. During its lifetime, it moves according to standard Brownian motion. The lifetime is independent of its position. At the end of its lifetime, it undergoes dyadic branching. Each offspring independently repeats the above process and the system goes on.  Let $\mathcal{N}_t$ be the set of particles at time $t$ and $\{Y_u(t), u\in\mathcal{N}_t\}$ be the set of positions of particles at time $t$.  For $u \in \mathcal{N}_t$ and $s \in [0, t]$, we denote by $Y_u(s)$ the location of the particle at time $s$ that is the ancestor of the particle $u \in \mathcal{N}_t$.

There has been long-standing interest in the extremal position of the particles in BBM.
An asymptotic expression for the median position of the maximal displacement was first given by Bramson \cite{Bramson} in 1978.
Let $M_t = \max_{u \in \mathcal{N}_t} Y_u(t)$ be the position of the maximal particle at time $t$, and
denote by $m(t)$ the median position of $M_t$. Using a connection between the maximal displacement and the F-KPP equation, Bramson showed that as $t\to\infty$,
\begin{equation*}
m(t)=\sqrt{2}t-\frac{3}{2\sqrt{2}}\log t+O(1).
\end{equation*}
Bramson's proof was simplified by Roberts \cite{R13}, who also proved that almost surely
\[
  \liminf_{t\to\infty}\frac{M_t-\sqrt{2}t}{\log t} = -\frac{3}{2\sqrt{2}}, \qquad \limsup_{t\to\infty}\frac{M_t-\sqrt{2}t}{\log t}=-\frac{1}{2\sqrt{2}}.
\]
This second order almost sure limit result for the extremal position was first established by Hu and Shi \cite{HS09} in the discrete setting for branching random walks.

A natural follow-up question is how close can particles stay to the leading order of the maximal displacement $\sqrt{2}t$. This is called the consistent maximal displacement and has been studied in different contexts by Fang and Zeitouni \cite{Fang}, Faraud, Hu and Shi \cite{FHS12}, Jaffuel \cite{Jaffuel}, Mallein \cite{Mallein15, Mallein} and Roberts \cite{Roberts}. One way to characterize the consistent maximal displacement is to estimate the smallest deviation of particle trajectories from the critical line $s\mapsto \sqrt{2}s$ up to time $t$.  Equivalently, we can give the particles a drift to the left of $-\sqrt{2}$ and consider the smallest deviation of particle trajectories from the origin.  For $u \in \mathcal{N}_t$ and $\rho \in \R$, let $Y_u^{\rho}(t) = Y_u(t) - \rho t$, which gives the particles a drift of $-\rho$.  Define
\[
\mathcal{L}_{0}(t)=\inf_{u\in\mathcal{N}_t} \sup_{s\in [0, t]} - Y^{\sqrt{2}}_u(s).
\]
While the maximal displacement differs from $\sqrt{2}t$ on the scale of $\log t$, the consistent maximal displacement differs from $\sqrt{2} t$ on the scale of $t^{1/3}$. Let $c=(3\pi^2)^{1/3}/\sqrt{2}$. Fang and Zeitouni~\cite{Fang} and Faraud, Hu and Shi \cite{FHS12} proved in the discrete setting of branching random walks, and Roberts \cite{Roberts} stated in the continuous setting for ordinary BBM, that
\begin{equation}\label{consis0}
\lim_{t\to\infty} \frac{\mathcal{L}_0(t)}{c t^{1/3}}=1\quad \textup{a.s.}
\end{equation}
Furthermore, Theorem 2 in \cite{Roberts} implies the stronger result that $\{\mathcal{L}_{0}(t) - ct^{1/3}\}_{t>0}$ is tight.

For $\eps>0$, we can also define, see Figure \ref{figCmd} for illustration,
\[
\mathcal{L}_{\eps}(t) = \inf_{u\in\mathcal{N}_t} \sup_{s\in [0, t]} - Y_u^{\sqrt{2} + \eps}(s).
\]
It is easy to see that for $\eps>0$,
\begin{equation}\label{consisrho}
\lim_{t\to\infty} \frac{\mathcal{L}_\eps(t)}{\eps t}=1\qquad \textup{a.s.}
\end{equation}
Comparing equations (\ref{consis0}) and (\ref{consisrho}), we are interested in how the consistent maximal displacement transitions from $\eps t$ to $c t^{1/3}$ as $\eps$ approaches 0 from above.

\begin{figure}[ht]
\centering
\input{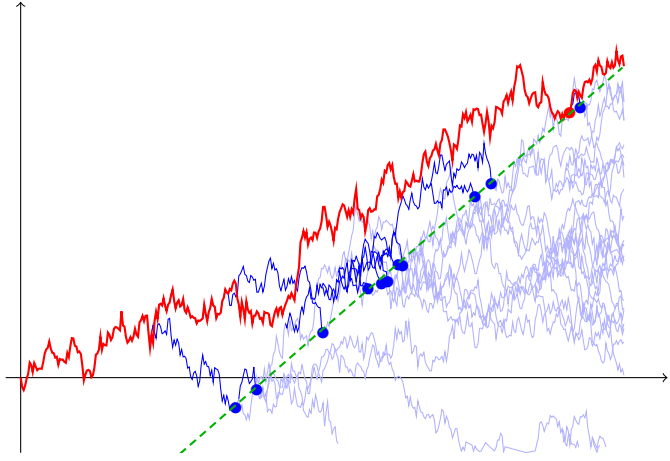}
\caption{Illustration of the definition of $\mathcal{L}_\eps(t)$ with $\eps = 0.1$. The path of a branching Brownian motion is drawn in blue, with the path of the particle realizing the minimum in the formula for $\mathcal{L}_\eps(t)$ in red. The line $s \mapsto (\sqrt{2} + \eps)s - \mathcal{L}_\eps(t)$ is drawn in green, so $-\mathcal{L}_\eps(t)$ corresponds to the intercept of this line with the origin. Note that all trajectories in the branching Brownian motion at time $t$ cross this line, with the red line being tangent.}
\label{figCmd}
\end{figure}

Let $$\omega=2^{-3/4}\pi.$$  Define the function $F: [0, \infty) \rightarrow [0, \infty)$ by
\[
F(u) = u-\omega\arctan\left(\frac{u}{\omega}\right).
\]
Note that $F'(u) = u^2/(u^2 + \omega^2) \in (0, 1)$ for all $u > 0$.  In particular, $F$ is strictly increasing, so the inverse $F^{-1}(u)$ is well defined for all $u\geq 0$.
The function $F$, up to scaling, appeared previously in the closely related work of Simon \cite{simon}; see equations (3.32) and (3.33) in \cite{simon}.  Define
\begin{equation}\label{L*def}
L^*_\eps(t)=\eps^{-1/2}F^{-1}(\eps^{3/2}t),
\end{equation}
and define
\begin{equation}\label{Lbardef}
\bar{L}_\eps(t) = L_\eps^*(t) + \frac{3}{2 \sqrt{2}} \log^+(\eps^{3/2} t),
\end{equation}
where $\log^+(x) = \max\{0, \log x\}$.
The following theorem shows that $\bar{L}_{\eps}(t)$ is a good approximation of $\mathcal{L}_{\eps}(t)$ until time $O(\eps^{-2})$.

\begin{Theo}\label{thmtight}
Let $\{t_\eps\}_{0 < \eps < 1}$ be a collection of times indexed by $\eps$.
Suppose there exists a positive constant $\newC\label{C_mainub}$ such that for all $\eps \in (0, 1)$, we have
\begin{equation}\label{tles}
0\leq t_\eps\leq \oldC{C_mainub} \eps^{-2}.
\end{equation}
Then the collection of random variables $\{\mathcal{L}_{\eps}(t_\eps)-\bar{L}_{\eps}(t_\eps)\}_{0<\eps<1}$ is tight.
\end{Theo}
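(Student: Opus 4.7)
The plan is to prove tightness through matching one-sided estimates: $\mathcal L_\eps(t_\eps)\geq \bar L_\eps(t_\eps)-K$ with probability tending to $1$ as $K\to\infty$, uniformly in $\eps$, and $\mathcal L_\eps(t_\eps)\leq \bar L_\eps(t_\eps)+K$ with probability bounded below (say by $1/2$) uniformly in $\eps$; a standard branching-property/coupling argument then upgrades the latter to probability tending to $1$. The workhorse is the many-to-one identity: for any measurable curve $\gamma:[0,t]\to\R$,
\[
\mathbb E\bigl[\#\{u\in\mathcal N_t:Y_u^{\sqrt{2}+\eps}(s)\geq -\gamma(s)\ \forall s\leq t\}\bigr]
= e^t\,\mathbb P\bigl(B_s\geq (\sqrt{2}+\eps)s-\gamma(s)\ \forall s\leq t\bigr),
\]
with $B$ a standard Brownian motion starting from $0$. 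A Girsanov tilt removing the drift $\sqrt{2}+\eps$ rewrites the right-hand side as $e^{-\sqrt{2}\eps t-\eps^2 t/2}\,\mathbb E\bigl[e^{-(\sqrt{2}+\eps)B_t}\mathds{1}_{\{B_s\geq -\gamma(s),\,s\leq t\}}\bigr]$, reducing everything to a Brownian barrier-crossing problem with an exponential tilt at the endpoint.

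Differentiating the definition of $L^*_\eps$ yields the ODE
\[
\frac{d}{ds}L^*_\eps(s)=\eps+\frac{\omega^{2}}{L^*_\eps(s)^{2}},\qquad L^*_\eps(0)=0,
\]
which, in the spirit of equations (3.32)--(3.33) in \cite{simon}, is precisely calibrated so that the choice $\gamma=L^*_\eps$ makes the expected count above of polynomial order in $t$. The additive term $\frac{3}{2\sqrt{2}}\log^+(\eps^{3/2}t)$ in $\bar L_\eps$ is then the Bramson-type logarithmic correction to the barrier that absorbs this polynomial prefactor once $t\gg\eps^{-3/2}$; it is dormant for $t\ll\eps^{-3/2}$ because in that regime $L^*_\eps(t)\sim ct^{1/3}$ already dominates the would-be $\log t$. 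For the lower bound I would apply Markov's inequality with the constant barrier $\gamma\equiv \bar L_\eps(t_\eps)-K$. Under the Girsanov reduction, the endpoint tilt $e^{-(\sqrt{2}+\eps)B_t}$ contributes an extra $e^{-(\sqrt{2}+\eps)K}$ factor once $B_t$ is localized near the barrier, so the resulting expected count is bounded by $C e^{-\alpha K}$ for some $\alpha>0$ uniform in $\eps$, giving the desired tail estimate.

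The upper bound proceeds via a truncated second-moment argument. I would restrict attention to ``good'' particles whose trajectories stay in a tube of width $O(K)$ about $-L^*_\eps(s)-\frac{3}{2\sqrt{2}}\log^+(\eps^{3/2}s)$ throughout the interior of $[0,t_\eps]$, with prescribed endpoint behaviour ensuring that they reach the critical level only near time $t_\eps$. The first moment of this count is at least a positive constant depending on $K$, by the Girsanov reduction above. The second moment is handled by the classical decomposition over the time of the most recent common ancestor of a pair, each pair contribution being controlled by a Brownian bridge estimate in the time-inhomogeneous tube; Paley--Zygmund then yields a particle with the desired trajectory with positive probability. The main technical obstacle is obtaining all these estimates uniformly across the transition $t\sim\eps^{-3/2}$: a single barrier-crossing argument must work both in the Bramson regime ($L^*_\eps(t)\sim ct^{1/3}$, with the log correction inactive) and in the linear regime ($L^*_\eps(t)\sim \eps t$, with the log correction active). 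A harmonic change of variables based on the function $F$ from \cite{simon} should straighten the barrier and reduce the computation to a Brownian bridge estimate; uniform control of this reduction, together with precise asymptotics of $F$ and $F^{-1}$ at $0$ and at infinity and careful tracking of the Bramson correction, is the delicate step.
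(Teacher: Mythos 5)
Your architecture — Markov's inequality for the lower tail, a truncated second moment for the upper tail, both fed by the many-to-one lemma and a Girsanov tilt reducing to a Brownian barrier problem — matches the paper at the structural level, and the ODE $(L^*_\eps)'=\eps+\omega^2/(L^*_\eps)^2$ you identify is indeed the correct heuristic for the curve. The fatal gap is the lower-tail step, where you propose Markov's inequality with the \emph{constant} barrier $\gamma\equiv\bar L_\eps(t_\eps)-K$. The Girsanov-tilted expected count carries the exponent
\[
(\sqrt{2}+\eps)\gamma-\sqrt{2}\eps t-\tfrac{\eps^2 t}{2}-\tfrac{\gamma^2}{2t}.
\]
Using $F^{-1}(u)=u+\tfrac{\omega\pi}{2}+O(u^{-1})$, so that $\gamma=\eps t+\tfrac{\omega\pi}{2}\eps^{-1/2}+\tfrac{3}{2\sqrt 2}\log^+(\eps^{3/2}t)-K+o(\eps^{-1/2})$, and expanding, this exponent equals
\[
\frac{\pi^2}{2^{5/4}}\,\eps^{-1/2}+\frac{3}{2}\log^+(\eps^{3/2}t)-\sqrt{2}\,K+o(\eps^{-1/2}),
\]
which diverges like $e^{c\eps^{-1/2}}$ as $\eps\downarrow 0$ for any fixed $K$; in the regime $t\ll\eps^{-3/2}$ the residual is instead $\asymp t^{1/3}$, still unbounded across the range $t\lesssim\eps^{-2}$. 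So the ``$C$'' in your claimed bound $Ce^{-\alpha K}$ is not uniform in $\eps$. This is the same obstruction that makes a constant-barrier first moment useless already at criticality: the expectation is dominated by particles that overshoot far to the right of $-\gamma$ at time $t$. The paper neutralizes it by absorbing particles at the moving right boundary $H_\eps(s)=L_\eps(t_\eps-s)$ (with $K_\eps$ in the terminal window), decomposing survival into the events $D_1,\dots,D_5$ indexed by where (if ever) the ancestral line first crosses the boundary, and controlling each piece with the density estimates of Lemma~\ref{lem5.3}; the boundary killing and the sine weighting of the strip kernel are what cancel the $e^{c\eps^{-1/2}}$ excess you are seeing.

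For the upper tail, a tube second-moment argument is a conceivable alternative to the paper's route (Propositions~\ref{propLzt} and~\ref{prop-2}: an absorbing barrier $K_\eps$, a Galton--Watson coupling, and, when $\eps^{-3/2}\ll t_\eps\lesssim\eps^{-2}$, a reduction via the $H_\eps$ barrier to the shorter time scale). But the tube you describe, centered on $-L_\eps^*(s)-\tfrac{3}{2\sqrt 2}\log^+(\eps^{3/2}s)$, is not the curve the barrier analysis selects: translated into your coordinates the paper's boundary is $L^*_\eps(t_\eps-s)-L^*_\eps(t_\eps)$, a concave curve that stays near the starting level for a long initial stretch and drops only near $s=t_\eps$ (slope $-(L^*_\eps)'(t_\eps)\approx-\eps$ at $s=0$, slope $-\infty$ at $s=t_\eps$), whereas yours has infinite slope already at $s=0$. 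The two agree at the endpoints but not in the interior, and carrying out two-sided moment estimates uniformly across the $t\asymp\eps^{-3/2}$ transition with the wrong interior shape is, as you yourself note, the delicate step — and it is left unverified.
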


Roughly speaking, for small $\eps$, we have $\bar{L}_\eps(t) \approx ct^{1/3}$ when $t \ll \eps^{-3/2}$ and $\bar{L}_\eps(t) \approx \eps t$ when $t\gg \eps^{-3/2}$.  We will give more precise asymptotics for $\bar{L}_{\eps}(t)$ in Section \ref{Lbarasym} below.  Theorem \ref{thmtight} thus implies that for $\eps$ sufficiently small, the consistent maximal displacement behaves like $\mathcal{L}_0(t)$, as in the critical case when $t \ll \eps^{-3/2}$, and grows linearly as in the supercritical case when $t \gg \eps^{-3/2}$, see Figure~\ref{figL}.  That is, we see a phase transition in the consistent maximal displacement on the time scale of $\eps^{-3/2}$.  We can also see from \eqref{Lbardef} that $\bar{L}_{\eps}(t_{\eps})$ could be replaced in the conclusion of Theorem \ref{thmtight} by the simpler expression $L_{\eps}^*(t_{\eps})$ if $\eps^{-2}$ were replaced by $\eps^{-3/2}$ in (\ref{tles}).

\begin{figure}[ht]
\centering
\begin{tikzpicture}[scale=1]
\draw[->] (0,0)--(8.5,0) node[right]{\tiny $t$};
\draw[->] (0,0)--(0,4)  node[left]{\tiny $L_\varepsilon(t)$};
\draw[scale=1, domain=0:2, smooth, thick, samples=200, variable=\x] plot ({\x}, {1.3*\x^(1/3)});
\draw[densely dotted] (2, 0) -- (2, 1.63);
\draw[scale=1, domain=5:8, smooth, variable=\x, thick] plot ({\x}, {\x/2});
\draw[densely dotted] (5, 0) -- (5, 2.5);
\draw[densely dotted] (8, 0) -- (8, 4);
\node at (1,1.6) {\tiny $\approx ct^{1/3}$};
\node at (1,-0.2) {\tiny $t \gg \varepsilon^{-3/2}$};
\node at (3.5,-0.2) {\tiny $t \asymp\varepsilon^{-3/2}$};
\node at (6.3,3.4) {\tiny $\approx \varepsilon t $};
\node at (6.8,-0.2) {\tiny $\varepsilon^{-3/2}\ll t \lesssim \varepsilon^{-2}$};

\coordinate (A) at (2, 1.63);
\coordinate (B) at (5, 2.5);
\draw[dotted] (A) to (B);
\node at (3.5,2.6) {\tiny $\approx \varepsilon^{-1/2}F^{-1}(\varepsilon^{3/2}t)$};
\end{tikzpicture}
\caption{Illustration of the graph of $t \mapsto L_\varepsilon(t)$ at the different timescales.}
\label{figL}
\end{figure}
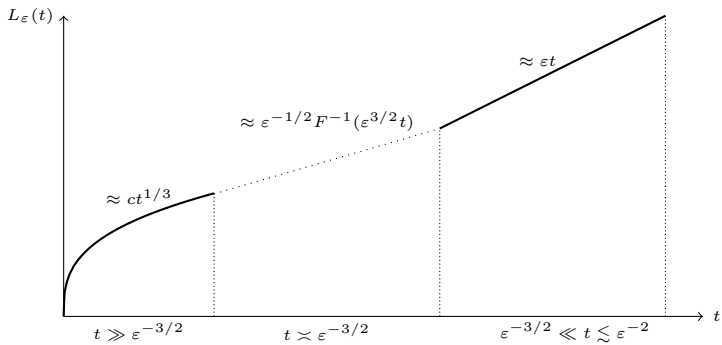

The consistent maximal displacement problem can also be formulated in terms of BBM with an absorbing barrier. Indeed, this is the setup considered by Jaffuel \cite{Jaffuel}. Consider BBM with absorption in which each particle moves as Brownian motion with drift $-\rho$, splits into two particles at rate $1$ and is killed upon hitting the origin. Kesten \cite{kesten} first studied this model in 1978 and delineated the regions where the process is subcritical, critical or supercritical.  Kesten proved that $\rho=\sqrt{2}$ is the critical value separating the supercritical case $\rho<\sqrt{2}$ and the subcritical case $\rho>\sqrt{2}$. We focus on the slightly subcritical case where $\rho$ is slightly above the critical value $\sqrt{2}$.
Write $\rho=\sqrt{2}+\eps$ for $0<\eps<1$. We use $P_x$ to represent the probability measure of BBM with absorption which starts from a single particle at $x$ at time $0$ and has drift $-\rho$. Let $\zeta$ be the extinction time.

To see the connection with consistent maximal displacement, note that $\mathcal{L}_{\eps}(t) < x$ means there is a particle $u \in \mathcal{N}_t$ such that $- Y_u^{\sqrt{2} + \eps}(s) < x$ for all $s \in [0,t]$, which means that $x + Y^{\sqrt{2} + \eps}_u(s) > 0$ for all $s \in [0, t]$.  Therefore, if we consider a process starting from an initial particle at $x$ such that all particles have a drift of $-(\sqrt{2} + \eps)$, the survival of the process until time $t$ is equivalent to the condition that $\mathcal{L}_{\eps}(t) < x$.  Let $\zeta$ be the extinction time. We have
\begin{equation}\label{Lepszeta}
P(\mathcal{L}_{\eps}(t) < x) = P_x(\zeta > t).
\end{equation}
The following theorem states that $\bar{L}_{\eps}(t)$ is also the position where a particle needs to be at time zero in order for it to have a good chance of surviving until time $t$.

\begin{Theo} \label{thmLzt}
Suppose (\ref{tles}) holds. For every ~$\delta>0$, there exist positive constants $\newC\label{C_main1}$ and $\newC\label{C_main2}$ such that for all $\eps \in (0, 1)$,
\begin{equation}\label{Lztlb}
P_{\bar{L}_\eps(t_\eps)-\oldC{C_main1}}(\zeta>t_\eps)<\delta
\end{equation}
and
\begin{equation}\label{newLztub}
P_{\bar{L}_\eps(t_\eps)+\oldC{C_main2}}(\zeta<t_\eps)<\delta.
\end{equation}
\end{Theo}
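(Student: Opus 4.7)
The plan is to deduce Theorem \ref{thmLzt} directly from Theorem \ref{thmtight} by using the duality \eqref{Lepszeta} between consistent maximal displacement and survival of BBM with absorption. Tightness of $\{\mathcal{L}_\eps(t_\eps) - \bar L_\eps(t_\eps)\}_{0<\eps<1}$ means that for every $\delta > 0$ we may choose constants $\oldC{C_main1}, \oldC{C_main2} > 0$ such that uniformly in $\eps \in (0,1)$,
\[
P\bigl(\mathcal{L}_\eps(t_\eps) - \bar L_\eps(t_\eps) < -\oldC{C_main1}\bigr) < \delta
\quad\text{and}\quad
P\bigl(\mathcal{L}_\eps(t_\eps) - \bar L_\eps(t_\eps) > \oldC{C_main2}\bigr) < \delta.
\]
These two bounds come straight out of the definition of tightness.

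For the first inequality \eqref{Lztlb}, I would apply \eqref{Lepszeta} with $x = \bar L_\eps(t_\eps) - \oldC{C_main1}$ to obtain
\[
P_{\bar L_\eps(t_\eps) - \oldC{C_main1}}(\zeta > t_\eps) = P\bigl(\mathcal{L}_\eps(t_\eps) < \bar L_\eps(t_\eps) - \oldC{C_main1}\bigr),
\]
which is bounded by $\delta$ by the first tightness estimate above. For the second inequality \eqref{newLztub}, I would apply \eqref{Lepszeta} with $x = \bar L_\eps(t_\eps) + \oldC{C_main2}$ and use the complementary event:
\[
P_{\bar L_\eps(t_\eps) + \oldC{C_main2}}(\zeta < t_\eps) \leq P_{\bar L_\eps(t_\eps) + \oldC{C_main2}}(\zeta \leq t_\eps) = P\bigl(\mathcal{L}_\eps(t_\eps) \geq \bar L_\eps(t_\eps) + \oldC{C_main2}\bigr),
\]
which is again bounded by $\delta$ by the second tightness estimate.

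Since Theorem \ref{thmtight} is assumed to be already proved, there is no genuine obstacle here — the theorem is essentially a restatement of the tightness result through the duality \eqref{Lepszeta}. The only minor point to check is that the boundary event $\{\zeta = t_\eps\}$ is handled correctly; the inequality $P_x(\zeta < t_\eps) \leq P_x(\zeta \leq t_\eps)$ is trivial and suffices for our purposes, so no continuity argument is needed.
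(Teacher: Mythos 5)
Your proposal is logically circular within the structure of this paper. After stating Theorem~\ref{thmtight} and Theorem~\ref{thmLzt}, the paper explicitly observes (in the paragraph following Theorem~\ref{thmLzt}) that the two results are equivalent via the identity \eqref{Lepszeta}, and then states: ``Thus, Theorem~\ref{thmLzt} immediately implies the tightness in Theorem~\ref{thmtight}, and we will focus for the rest of the paper on proving Theorem~\ref{thmLzt}.'' In other words, the paper does \emph{not} prove Theorem~\ref{thmtight} independently; it is \emph{deduced} from Theorem~\ref{thmLzt}. Your argument runs the implication in the opposite direction, so you are implicitly assuming exactly what needs to be proved. What you have written is a correct verification that the two theorems are equivalent (a fact the authors already record), but it is not a proof of either one.

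The actual proof of Theorem~\ref{thmLzt} is the technical heart of the paper and none of it appears in your proposal. For \eqref{Lztlb} the paper kills particles at an upper barrier (the curve $H_\eps(\cdot)$, and also $K_\eps(\cdot)$ in a later time window), decomposes the survival event into several pieces $D_1,\dots,D_5$, and bounds each by a first-moment estimate on the number of particles hitting the barrier, using the density estimates of Section~\ref{densitywithboundary} and the moment bounds of Section~\ref{numberhitting}. For \eqref{newLztub} the paper first establishes Proposition~\ref{propLzt} (via a coupling with a Galton--Watson process built from particles that hit $K_\eps(\cdot)$, as in Lemma~\ref{LsurvivSub}) to handle $t_\eps\lesssim\eps^{-3/2}$, and then Proposition~\ref{prop-2} (second-moment/Paley--Zygmund argument on particles hitting $H_\eps(\cdot)$) together with Neveu's result \eqref{neveu} to bootstrap to $t_\eps\lesssim\eps^{-2}$. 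To repair your approach you would need to supply an independent proof of Theorem~\ref{thmtight} that does not rely on Theorem~\ref{thmLzt}; absent that, you must carry out the barrier/moment/coupling analysis directly, as the paper does.
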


Theorem \ref{thmLzt} is a reformulation of Theorem \ref{thmtight} in the setting of BBM with absorption.  Indeed, equation \eqref{Lepszeta} implies that \eqref{Lztlb} is equivalent to the statement that $P(\mathcal{L}_{\eps}(t_{\eps}) < \bar{L}_{\eps}(t_{\eps}) -\oldC{C_main1}) < \delta$, and \eqref{newLztub} is equivalent to the statement that $P(\mathcal{L}_{\eps}(t_{\eps}) \geq \bar{L}_{\eps}(t_{\eps}) + \oldC{C_main2}) < \delta$.  Thus, Theorem \ref{thmLzt} immediately implies the tightness in Theorem \ref{thmtight}, and we will focus for the rest of the paper on proving Theorem~\ref{thmLzt}.

In the critical case, the relevance of the curve $ct^{1/3}$ was already apparent from the work of Kesten \cite{kesten}.  Berestcyki, Berestycki and Schweinsberg \cite{bbs14} proved that in the case of critical drift, $ct^{1/3}$ is roughly the position where a particle needs to be located at time zero in order for it to have a good chance of surviving until time~$t$. Maillard and Schweinsberg later obtained a stronger result; see Theorem 1.3 of \cite{ms20}. In the subcritical case $\rho>\sqrt{2}$, it is obvious that particles need to start from at least $\eps t$ in order to have descendants alive until time $t$. In the slightly subcritical case, Theorem~\ref{thmLzt} shows that to have a good chance of survival until time $t$, the position of the initial particle should be around $ct^{1/3}$ if $t\ll\eps^{-3/2}$ and around $\eps t$ when $t \gg \eps^{-3/2}$. Therefore, $\eps^{-3/2}$ is the time scale when the slightly subcritical process transitions from critical behavior to subcritical behavior. For $t \gg \eps^{-2}$, the problem of obtaining tight estimates for the position where a particle must start to have a descendant alive at time $t$ remains open.

Observe that one can also frame these results in terms of PDEs. More precisely, if one writes  $u(x,t)  = P_x(\zeta\le t)$ then $u$ satisfies the following boundary value problem:
\[
\left\{
\begin{array}{l}
\partial_t u =\frac12 \partial^2_{xx} u  -\rho \partial_x u +u(1-u) \\
u(0,t)=1 \, \forall t\ge 0 \text { and } u(0,x) = 0 \, \forall x >0.
\end{array}
\right.
\]
Then Theorem \ref{thmLzt} above suggests that $u(x,t)$ develops a front whose position will initially be at $ct^{1/3}$ before times of order $\eps^{-3/2}$ while after times $t\gg \eps^{-2}$ the position will be near $\eps t$. The shape of the front can be expected to be that of the usual critical travelling wave of the Fisher-KPP equation. In the critial case $\rho =\sqrt 2$ the convergence $u(ct^{1/3}+x,t )$ to the travelling wave was proven in Theorem 1.3 of \cite{ms20}.

\subsection{Main ingredients of the proof}

In this subsection, we give an overview of the strategy of the proof.  We will first introduce some notation that will be used throughout the paper. For two collections of positive numbers $\{x_{\eps}\}_{0<\eps<1}$ and $\{y_\eps\}_{0<\eps<1}$ indexed by $\eps$, if $x_{\eps}/y_{\eps}$ is bounded above by a positive constant, we write $x_{\eps}\lesssim y_{\eps}$. If $\lim_{\eps\downarrow0}x_{\eps}/y_{\eps}=0$, we write $x_{\eps}\ll y_{\eps}$. We define $x_{\eps}\gtrsim y_{\eps}$ and $x_{\eps}\gg y_{\eps}$ accordingly. Moreover, the notation $x_{\eps}\asymp y_{\eps}$ means that $x_{\eps}/y_{\eps}$ is bounded both above and below by positive constants.

We now define a function which is a perturbation of the function $F$ defined above.
We define $F_{\eps}: [0, \infty) \rightarrow \R$ by
\[
F_\eps(u) = u-\omega\arctan\left(\frac{u}{\omega}\right)-\frac{3}{4\sqrt{2}}\eps^{1/2}\log\left(\frac{u^2}{\omega^2}+1\right).
\]
Note that $F_{\eps}(0) = 0$ and $\lim_{u \rightarrow \infty} F_{\eps}(u) = \infty$.  Also, $$F_{\eps}'(u) = \frac{1}{\omega^2 + u^2} \bigg(u^2 - \frac{3}{2 \sqrt{2}} \eps^{1/2} u \bigg).$$
Because $F'_{\eps}(u) < 0$ for $u < \frac{3}{2 \sqrt{2}} \eps^{1/2}$ and $F'_{\eps}(u) > 0$ for $u > \frac{3}{2 \sqrt{2}} \eps^{1/2}$, we see that for each $t > 0$, there is a unique $u > 0$ such that $F_{\eps}(u) = t$.  Therefore, the inverse function $F_{\eps}^{-1}(t)$ is well-defined for $t > 0$, and we can define $F_{\eps}^{-1}(0) = \lim_{t \downarrow 0} F_{\eps}^{-1}(t)$.
For $0 < \eps < 1$ and $t > 0$, we define
\begin{equation}\label{defL}
L_{\eps}(t) = \eps^{-1/2}F_{\eps}^{-1}(\eps^{3/2}t).
\end{equation}
Lemma \ref{LepsLbar} below shows that $L_{\eps}(t)$ and $\bar{L}_{\eps}(t)$ differ only by a constant, so it is enough to prove Theorem \ref{thmLzt} with $L_{\eps}(t_{\eps})$ in place of $\bar{L}_{\eps}(t_{\eps})$.

We will need to consider a modified process in which particles are killed not only when they reach the origin but also when they reach an upper boundary.
For $0\leq s < t_{\eps}$, define
\[
H_\eps(s)=L_\eps(t_{\eps}-s),
\]
\[
K_\eps(s)=L_\eps^*(t_{\eps}-s).
\]
To prove equation (\ref{Lztlb}), we consider a process in which we kill particles both at the origin and at $H_\eps(\cdot)$.  For the original process to survive until time $t_{\eps}$, either the modified process must have particles surviving until $t_{\eps}$, or at least one particle must hit $H_\eps(\cdot)$ in the modified process.  By Markov's inequality, the probability that at least one particle hits $H_{\eps}(\cdot)$ is bounded above by the expected number of particles hitting $H_{\eps}(\cdot)$.  Therefore, the key to proving (\ref{Lztlb}) is to estimate the first moment of the number of particles hitting $H_\eps(\cdot)$ in the modified process.

To prove equation (\ref{newLztub}), we consider the cases $t_\eps\lesssim\eps^{-3/2}$ and $\eps^{-3/2}\ll t_\eps\lesssim\eps^{-2}$ separately. When $t_\eps\lesssim\eps^{-3/2}$, the function $L_\eps(t_\eps)$ is within $O(1)$ distance of $L_\eps^*(t_\eps)$. We first show that equation~(\ref{newLztub}) holds with $L_\eps(t_\eps)$ replaced by $L_\eps^*(t_\eps)$, which is Proposition \ref{propLzt} below.  For this proposition and the following one, we will fix a positive constant $\newC\label{C_shorttime}>2.$

\begin{Prop}\label{propLzt}
Let $\{t_\eps\}_{0 < \eps < 1}$ be a collection of times indexed by $\eps$. Suppose for all $\eps \in (0, 1)$,
\begin{equation}\label{limtz}
0 \leq t_{\eps} \leq \oldC{C_shorttime} \eps^{-3/2}.
\end{equation}
For every ~$\delta>0$, there exists a positive constant $\newC\label{C_th2}$ that only depends on $\oldC{C_shorttime}$ and $\delta$ such that for all $\eps$ sufficiently small,
 \begin{equation}\label{Lztub}
P_{L_\eps^*(t_\eps)+\oldC{C_th2}}(\zeta<t_\eps)<\delta.
\end{equation}
\end{Prop}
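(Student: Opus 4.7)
The plan is to adapt to the slightly subcritical setting the second-moment method used for the critical case in \cite{bbs14, ms20}, exploiting that on the time scale $t_\eps \lesssim \eps^{-3/2}$ the function $L^*_\eps(\cdot)$ plays the same role as $c\, t^{1/3}$ in the critical case. Concretely, I would introduce a \emph{modified} process in which each particle is killed not only at the origin but also upon hitting the descending curve $s \mapsto K_\eps(s) + A = L^*_\eps(t_\eps - s) + A$, where $A \geq 1$ is a constant to be chosen. Writing $N_\eps(A)$ for the number of particles alive at time $t_\eps$ in this modified process started from a single particle at $L^*_\eps(t_\eps) + A$, the target splits into: (a) showing that for some $A_0$ and some $c > 0$, one has $P(N_\eps(A_0) \geq 1) \geq c$ uniformly in small $\eps$; and (b) a boosting argument that raises this survival probability above $1 - \delta$ by enlarging the starting point.

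For (a) I would use Paley--Zygmund. The first moment $E[N_\eps(A_0)]$ is computed via the many-to-one formula; after a Girsanov change of measure that removes the drift $-\rho$, it reduces to a standard Brownian functional in which a Brownian motion started at $L^*_\eps(t_\eps) + A_0$ must remain in the slowly-varying strip $[0, K_\eps(s) + A_0]$, weighted by the factor $e^{(1 - \rho^2/2) t_\eps - \rho \tilde X_{t_\eps}}$. The definition of $L^*_\eps$ via $F^{-1}$ is tailored so that these pieces balance: using $F(u) = u - \omega \arctan(u/\omega)$ together with $\int \pi^2/(2(u^2 + \omega^2))\,du = \sqrt 2\,\omega \arctan(u/\omega)$ (since $\pi^2/2 = \sqrt 2\, \omega^2$), one checks that the boundary term $\rho L^*_\eps(t_\eps)$ absorbs both the exponential tilt $e^{-\sqrt 2 \eps t_\eps}$ coming from the subcritical drift and the leading Dirichlet eigenvalue cost from confinement. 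This yields $E[N_\eps(A_0)] \asymp 1$ for $A_0$ chosen large enough. The second moment is handled by the many-to-two decomposition, conditioning on the time and height of the most recent common ancestor of a pair of surviving particles; after the same Girsanov transform, each factor is bounded as in the first-moment step, and integration over the branching coordinates gives $E[N_\eps(A_0)^2] \lesssim E[N_\eps(A_0)]^2$, so Paley--Zygmund delivers $P(N_\eps(A_0) \geq 1) \geq c > 0$.

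For (b), I would take $\oldC{C_th2} = A_0 + B$ with $B$ large. On the initial interval $[0,1]$ the BBM started from $L^*_\eps(t_\eps) + A_0 + B$ behaves like an essentially unconstrained BBM (all the relevant barriers are far away for $\eps$ small), so a routine estimate shows that at time $1$ it produces at least $M$ descendants above $K_\eps(1) + A_0$ with probability $\geq 1-\delta/2$, provided $B = B(M)$ is large enough. By the branching property applied at time $1$ together with (a), each of these $M$ descendants independently produces a line that survives until $t_\eps$ with probability $\geq c$, so the total extinction probability is at most $\delta/2 + (1-c)^M$, which is below $\delta$ for $M$ large.

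The main obstacle will be the second-moment bound in (a). The strategy is standard, but the killing curve $K_\eps$ depends on $\eps$ and is not straight, so one must produce small-ball estimates for Brownian motion in the strip $[0, K_\eps(s) + A_0]$ that are uniform in $\eps$ over the entire range $t_\eps \leq \oldC{C_shorttime} \eps^{-3/2}$. Under the natural rescaling of space by $\eps^{1/2}$ and time by $\eps^{3/2}$, the curve $K_\eps$ maps to an $\eps$-independent object governed by the ODE attached to $F$, and the second-moment integral should reduce to a parameter-free calculation.
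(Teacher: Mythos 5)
Your proposal takes a genuinely different route from the paper. The paper never computes the density at, or immediately before, time $t_\eps$: instead, Lemma~\ref{LsurvivSub} reduces survival to the positive-probability event of a descendant hitting the curve $K_\eps(\cdot)$ during a window $[t_\eps-A_1,t_\eps-A_1/3]$, and that event is established via a time-inhomogeneous Galton--Watson coupling built from Corollary~\ref{cor17}, which itself uses first- and second-moment estimates for the number of boundary \emph{hits} $R_t^{K_\eps}(\alpha t,\beta t)$ on $[\alpha t,\beta t]\subset[0,t_\eps-\oldC{C_a}]$. Your plan, by contrast, applies Paley--Zygmund directly to the number $N_\eps(A)$ of particles alive at time $t_\eps$ in a widened strip, avoiding the iterated generation structure. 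This is an attractive simplification if it works, but as written it has a concrete gap.

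The problem is the density estimate for the shifted curve $f=K_\eps+A$ on the whole interval $[0,t_\eps]$. Lemma~\ref{lem5.3} is exact, but the paper's usable approximation (Lemma~\ref{density}) comes from bounding $\phi^f_{r,s}$, and the error there involves the integral $\int_r^s |f''(u)|\,f(u)\,du$. For $f=K_\eps$ this is exactly $\omega^2(1/K_\eps(s)-1/K_\eps(r))$, which is bounded as long as $s\le t_\eps-\oldC{C_a}$, and that is precisely why the paper only ever needs the density up to $t_\eps-\oldC{C_a}$. For $f=K_\eps+A$ you instead get $\int|K_\eps''|K_\eps + A\int|K_\eps''|$, and the second term equals $A\,\omega^2(1/K_\eps(s)^2-1/K_\eps(r)^2)$, which blows up as $s\uparrow t_\eps$ because $K_\eps(t_\eps)=L^*_\eps(0)=0$. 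The boundary terms $|f'(s)|f(s)/2$ suffer similarly, since $K_\eps'(s)\to-\infty$ (the curve $L^*_\eps$ has a vertical tangent at $0$, inherited from the $t^{1/3}$ scaling). So the $e^{O(\cdot)}$ factor in the analogue of~\eqref{densityBrownian} is not uniformly bounded near the terminal time, and your first- and second-moment computations for $N_\eps(A)$ cannot be carried out via the machinery you appeal to. A fix is to count particles at time $t_\eps-C$ in the unshifted strip $[0,K_\eps(\cdot)]$ for a fixed constant $C$ (where $K_\eps(t_\eps-C)\approx cC^{1/3}$ is bounded) and then run a separate finite-time survival estimate over $[t_\eps-C,t_\eps]$; but at that point you have largely reproduced the architecture of the paper's Galton--Watson reduction, which is designed exactly to avoid the singular terminal window.

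Two smaller points. First, the second-moment bound $E[N_\eps(A)^2]\lesssim E[N_\eps(A)]^2$ is plausible but far from automatic; the analogue in the paper (Lemma~\ref{lem16}) is a delicate integral estimate split according to $s\lessgtr \oldC{C_b}K_\eps(0)^2$, and you would need the corresponding analysis (with the additional $A$-dependence tracked). Second, the boosting step as stated does not work at a \emph{fixed} time $1$: a BBM with drift $-\rho$ produces on average only $e$ descendants by time $1$, so you cannot get an arbitrary number $M$ of particles above a fixed level with probability $1-\delta/2$ by merely enlarging the starting height $B$. The horizon must grow with $M$, which is exactly why the paper invokes Neveu's limit theorem~\eqref{neveu} for $N_\xi(y)$ with both $\xi$ and $y$ large. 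With those repairs your overall strategy would be sound and still distinct from the paper's, but as written the argument does not close.
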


To prove Proposition \ref{propLzt}, we stop particles when they reach an upper boundary $K_{\eps}(\cdot)$ and use an argument similar to one in \cite{bbs14}.  We use first and second moment estimates of the number of particles hitting $K_\eps(\cdot)$ to show that if a particle starts near $L^*_{\eps}(t_{\eps})$, then descendants of this particle are likely to reach the upper boundary at a later time.  Then descendants of those particles are likely to hit the upper boundary again at a still later time, and so on.  This leads to a natural coupling with a supercritical branching process and establishes that the process is likely to survive until time $t_{\eps}$.

When $\eps^{-3/2}\ll t_\eps\lesssim \eps^{-2}$, we use the curve $H_{\eps}(\cdot)$ as the upper boundary.  We show that if the process starts with one particle near $L_\eps(t_\eps)$, then the probability that at least one descendant of this particle hits the curve $H_\eps(\cdot)$ during a time interval that is $O(\eps^{-3/2})$ before time $t_{\eps}$ is bounded from below by a positive constant.  This result is stated precisely in Proposition \ref{prop-2} below.

\begin{Prop}\label{prop-2}
Let $\{t_\eps\}_{0 < \eps < 1}$ be a collection of times indexed by $\eps$.
Suppose for all $\eps \in (0, 1)$,
\begin{equation}\label{limtzinf}
\oldC{C_shorttime} \eps^{-3/2} \leq t_{\eps} \leq \oldC{C_mainub} \eps^{-2}.
\end{equation}
Fix $\oldC{C_shorttime}/2<\mu<\lambda<\oldC{C_shorttime}-1$.  Consider the process which starts from a single particle at $L_\eps(t_\eps)-1$. There exists a positive constant $\newC\label{C_longhit}$ that only depends on $\oldC{C_mainub}$ such that for $\eps$ sufficiently small, the probability that at least one particle hits $H_\eps(\cdot)$ during the time interval $(t_\eps-\lambda\eps^{-3/2}, t_\eps-\mu\eps^{-3/2})$ is bounded below by $\oldC{C_longhit}$.
\end{Prop}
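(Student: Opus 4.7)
The plan is to use the classical first- and second-moment method. Let $N$ denote the number of particles whose trajectory first reaches the curve $H_\eps(\cdot)$ at some time in the window $I_\eps := (t_\eps - \lambda\eps^{-3/2}, t_\eps - \mu\eps^{-3/2})$, without having been absorbed at the origin beforehand. By the Paley-Zygmund inequality, $P(N \geq 1) \geq (EN)^2 / E[N^2]$, so it suffices to prove $EN \gtrsim 1$ and $E[N^2] \lesssim (EN)^2$ uniformly in $\eps$ as $\eps \to 0$, with constants depending only on $\lambda$, $\mu$, and $\oldC{C_mainub}$.

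For the first moment, the many-to-one lemma applied to the stopping line of first crossings of $H_\eps$ gives
\[
  EN = E_{L_\eps(t_\eps)-1}\bigl[e^{\tau}\,\mathbf{1}_{\tau \in I_\eps}\,\mathbf{1}_{\inf_{s < \tau} X_s > 0}\bigr],
\]
where $X$ is a single Brownian motion with drift $-(\sqrt{2}+\eps)$ and $\tau$ is its first crossing of $H_\eps$. A Girsanov transformation converts this into an expectation for standard Brownian motion in the time-varying strip $\{0 < y < H_\eps(s)\}$, weighted by a boundary factor $e^{-(\sqrt{2}+\eps)H_\eps(\tau)}$ and a time-integrated factor $e^{-(\sqrt{2}\eps + \eps^2/2)\tau}$. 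The function $L_\eps$ is engineered precisely so that, along the window $I_\eps$, the Dirichlet principal eigenvalue $\pi^2/(2H_\eps(\cdot)^2)$ together with its logarithmic correction of order $\eps^{1/2}$ matches the Girsanov penalty and the $e^\tau$ factor. Evaluating the resulting integral via the spectral decomposition of the killed Brownian semigroup (or by an analogue of Simon's tangent-plane analysis) yields a uniform lower bound $EN \geq c_1 > 0$.

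For the second moment, the many-to-two (spine) decomposition yields
\[
  E[N^2] = EN + 2 \int_0^{t_\eps}\!dr\, E_{L_\eps(t_\eps)-1}\!\left[e^r\,\mathbf{1}_{r < \tau,\, \inf_{s < r} X_s > 0}\, \Phi_\eps(X_r, r)^2\right]\!,
\]
where $\Phi_\eps(y, r)$ is the expected number of first crossings of $H_\eps$ in $I_\eps$ by descendants of a single particle started at $(r, y)$. One estimates $\Phi_\eps(y, r)$ by repeating the first-moment analysis shifted to initial data $(r, y)$, with a sharpened bound proportional to $\sin(\pi y / H_\eps(r))$ that exploits the vanishing of the principal eigenfunction at the top of the strip. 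Integrating against the branching measure then produces a constant multiple of $(EN)^2$, and Paley-Zygmund closes the argument.

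The hardest step will be the second-moment bound. Because the strip is marginally critical in the relevant asymptotic regime, there is no slack in the cancellation of branching, drift penalty, and eigenvalue contributions, and the logarithmic correction of order $\eps^{1/2}$ encoded in $F_\eps$ must be tracked throughout the analysis. The genuinely delicate regime is when the common ancestor branches close to the curve $H_\eps$ or close to the right endpoint of $I_\eps$: a naive bound $\Phi_\eps(y, r) \lesssim 1$ would yield a divergent pair integral, and one must instead use the $\sin$-weighted bound $\Phi_\eps(y, r) \lesssim H_\eps(r) - y$ (or an analogue) to obtain uniform integrability as $\eps \to 0$.
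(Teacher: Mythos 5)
Your proposal is correct and follows essentially the same route as the paper: compute first and second moments of the number of particles hitting $H_\eps(\cdot)$ in the window $(u_1,u_2)$ via many-to-one/many-to-two, Girsanov, and the killed-strip density estimates, then conclude by Paley--Zygmund (the paper phrases it as Cauchy--Schwarz). The paper isolates these moment bounds as Lemmas \ref{lemRH1stlb} and \ref{lemRH2nd}, and the delicate points you identify---tracking the $\eps^{1/2}\log$ correction built into $F_\eps$ so the Girsanov penalty, branching rate, and Dirichlet eigenvalue cancel, and using the $\sin$-weighted first-moment kernel rather than a crude $O(1)$ bound in the pair integral---are precisely what those lemmas handle.
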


Proposition~\ref{prop-2} then implies that if the process starts from one particle well above $L_\eps(t_{\eps})$, the probability that there is at least one descendant hitting the curve $H_\eps(\cdot)$ during this time interval is close to 1. Equation (\ref{newLztub}) follows from this fact combined with Proposition \ref{propLzt}. The proof of Proposition \ref{prop-2} is based on first and second moment estimates of the number of particles hitting $H_\eps(\cdot)$.

\subsection{Heuristic derivation of $L_\eps^*(\cdot)$}

We explain in this subsection how the curve $L^*_{\eps}(\cdot)$ can be understood.  We first consider a modified BBM process in which particles move as Brownian motion with drift $-\rho$, and particles are killed not only at the origin but also at some level $K$, which for now we take to be a fixed constant.  We call this process BBM in the strip.  Denote by $q_s^K(x,y)$ the density for this process, by which we mean that if there is one particle at $x$ at time zero, then the expected number of descendants of this particle in the set $A$ at time $t$ is given by $\int_A q_s^K(x,y) \: dy$.  According to equation (12) in \cite{bbs13}, if $s$ is large enough, then the density can be approximated by
\begin{equation}\label{ptxy}
q_s^{K}(x,y) \approx \frac{2}{K} e^{(1 - \rho^2/2 - \pi^2/2K^2)s} e^{\rho x} \sin \bigg(\frac{\pi x}{K} \bigg) e^{-\rho y} \sin \bigg(\frac{\pi y}{K} \bigg).
\end{equation}
This formula indicates that after a sufficiently long time, the number of particles near $y$ is proportional to $e^{-\rho y} \sin(\pi y/K)$.

The density of BBM in the strip will give a good approximation to the particle configuration of BBM with absorption if $K$ is large enough that relatively few particles reach the upper boundary but small enough that descendants of rare particles that drift close to the upper boundary do not dominate the first moment calculations.  These considerations require that we set $K$ to be close to where we expect the particle furthest from the origin to be located.  Note that if we place $N$ particles independently according to the exponential distribution with density $\rho e^{-\rho y}$, then the particle farthest from the origin will be located near $(\log N)/\rho$.  However, because the exponential term in (\ref{ptxy}) suggests that the number of particles decreases over time, the location of the upper boundary needs to move closer to the origin over time. Writing $\rho=\sqrt{2}+\eps$, we therefore replace the fixed constant $K$ in (\ref{ptxy}) by a function $K_{\eps}(\cdot)$.  We also let $N_{\eps}(s)$ denote the number of particles at time $s$.  Differentiating (\ref{ptxy}) with respect to $s$ and integrating over the density to give an estimate of the total number of particles then yields the rough approximation
$$N_{\eps}'(s) \approx \left(1-\frac{\rho^2}{2}- \frac{\pi^2}{2K_\eps^2(s)}\right) N_{\eps}(s).$$
If we choose the curve $K_{\eps}(s)$ so that $K_{\eps}(s) \approx (\log N_{\eps}(s))/\rho$, then
\begin{equation}\label{Keq}
K_{\eps}'(s) \approx \frac{1}{\rho N_{\eps}(s)} N_{\eps}'(s) \approx \frac{1}{\rho} \bigg(1 - \frac{\rho^2}{2} - \frac{\pi^2}{2K_{\eps}(s)^2} \bigg).
\end{equation}

Consider first the critical case in which $\rho = \sqrt{2}$.  One can easily check that the solution to this differential equation when $K_0(t) = 0$ is $$K_0(s) = c(t - s)^{1/3}.$$
which is the curve used for truncation in \cite{bbs14, kesten, ms20}.  This calculation is consistent with the result in the critical case that $K_0(0) = ct^{1/3}$ is where a particle must begin to have a good chance to have a descendant alive at time $t$.  When $\eps > 0$, if we discard some small terms, the differential equation (\ref{Keq}) becomes
$$K_{\eps}'(s) \approx -\eps - \frac{\pi^2}{2 \sqrt{2}} \cdot \frac{1}{K_{\eps}(s)^2}.$$  Dividing both sides by $\eps + \pi^2/(2 \sqrt{2} K_{\eps}(s)^2)$, then integrating from $0$ to $t$ and making the change of variables $y = K_{\eps}(s)$, we get
\begin{equation}\label{Fderive}
-t = \int_0^t \frac{K_{\eps}'(s)}{\eps + \frac{\pi^2}{2 \sqrt{2} K_{\eps}(s)^2}} \: ds = \frac{1}{\eps} \int_{K_{\eps}(0)}^{K_{\eps}(t)} \frac{y^2}{y^2 + \frac{\pi^2}{2 \sqrt{2} \eps}} \: dy.
\end{equation}
Recalling that $\omega = 2^{-3/4} \pi$ and integrating, we obtain
$$-\eps t = K_{\eps}(t) - K_{\eps}(0) - \omega \eps^{-1/2} \bigg[ \arctan \bigg( \frac{\eps^{1/2} K_{\eps}(t)}{\omega} \bigg) - \arctan \bigg( \frac{\eps^{1/2} K_{\eps}(0)}{\omega} \bigg) \bigg].$$
It follows that if $K_{\eps}(t) = 0$, then
\begin{equation}\label{K0eq}
K_{\eps}(0) = \eps t + \omega \eps^{-1/2} \arctan \bigg( \frac{\eps^{1/2} K_{\eps}(0)}{\omega} \bigg).
\end{equation}
Now, if we let $F(u) = u - \omega \arctan(u/\omega)$, then (\ref{K0eq}) implies that $F(\eps^{1/2} K_{\eps}(0)) = \eps^{3/2} t$, and therefore $K_{\eps}(0) = \eps^{-1/2} F^{-1}(\eps^{3/2} t)$, matching our definition of $L^*_{\eps}(t)$ in (\ref{L*def}).

The above heuristics are accurate when $t \lesssim \eps^{-3/2}$.  For $t \gg \eps^{-3/2}$, a finer adjustment is needed.  We will replace $K_{\eps}(\cdot)$ by $H_{\eps}(\cdot)$ so that the curve will satisfy the result of Lemma \ref{newtau(r,s)} rather than the result of Lemma \ref{tau(r,s)} below.

\begin{Rmk}
{\em One could also consider the asymptotics of $\mathcal{L}_{\eps}(t)$ when $\eps < 0$.  In this case, it is straightforward to see that $\mathcal{L}_{\eps}(t)$ converges to a finite limit as $t \rightarrow \infty$.  The heuristics leading to (\ref{Fderive}) still hold when $\eps < 0$.  Therefore, when $\eps < 0$, we derive
\begin{align*}
|\eps| t &= - \int_{K_{\eps}(0)}^{K_{\eps}(t)} \frac{y^2}{\frac{\pi^2}{2 \sqrt{2} |\eps|} - y^2} \: dy \\
&= \omega |\eps|^{-1/2} \tanh^{-1} \bigg( \frac{K_{\eps}(0)}{\omega |\eps|^{-1/2}} \bigg) - \omega |\eps|^{-1/2} \tanh^{-1} \bigg( \frac{K_{\eps}(t)}{\omega |\eps|^{-1/2}} \bigg) - K_{\eps}(0) + K_{\eps}(t).
\end{align*}
It follows that if $K_{\eps}(t) = 0$, then
$$|\eps|^{3/2} t = \omega \tanh^{-1} \bigg( \frac{|\eps|^{1/2} K_{\eps}(0)}{\omega} \bigg) - |\eps|^{1/2} K_{\eps}(0).$$
If we define the function $G(u) = \omega \tanh^{-1}(u/\omega) - u$, then $|\eps|^{3/2} t = G(|\eps|^{1/2} K_{\eps}(0))$, and therefore $K_{\eps}(0) = |\eps|^{-1/2} G^{-1}(|\eps|^{3/2} t).$  We therefore conjecture that if we define $\bar{L}_{\eps}(t) = |\eps|^{-1/2} G^{-1}(|\eps|^{3/2} t)$ when $\eps < 0$, then a result similar to Theorem \ref{thmtight} should hold for negative $\eps$.  Note also that $\lim_{u \rightarrow \infty} G^{-1}(u) = \omega$, which means that for $t \gg |\eps|^{-3/2}$, we have $\bar{L}_{\eps}(t) \approx |\eps|^{-1/2} \omega$.  Indeed, it was shown in \cite{bbs11} that for BBM with absorption with drift $\sqrt{2 - \delta}$ for small $\delta$, the initial particle needs to start near $\pi/\sqrt{\delta}$ for the process to have a good chance of surviving forever.  Because $\delta \approx -2 \sqrt{2} \eps$, this corresponds to $|\eps|^{-1/2} \omega$ in our parameterization.  See also \cite{ghs} for results on branching random walks with a slightly supercritical drift.  However, we do not pursue the case of $\eps < 0$ further in this paper.}
\end{Rmk}

\subsection{Organization of the paper}

The rest of this paper is organized as follows. In Section \ref{densitywithboundary}, we consider two modified BBM processes in which particles are killed not only at the origin, but also at a right boundary curve. In the first process,  particles are killed at either the origin or $H_\eps(\cdot)$. In the second process, particles are killed at either the origin or $K_\eps(\cdot)$. We approximate the densities of the two modified processes. In Section \ref{numberhitting}, we estimate the first and second moments of the number of particles hitting the right boundaries in the two modified processes. Finally, in Section \ref{pfpropLsurviv}, after proving Propositions \ref{propLzt} and \ref{prop-2}, we give the proof of Theorem  \ref{thmLzt}.

To lighten the burden of notation, we will usually omit the subscript $\eps$ in the notation that is related to time. For example, we will write $t$ in place of $t_{\eps}$. However, it is important to keep in mind that we are considering a collection of processes indexed by $\eps$ and the time $t$ does depend on $\eps$.

\section{Density with killing at the right boundary}\label{densitywithboundary}

\subsection{Asymptotics for $\bar{L}_{\eps}(t)$}\label{Lbarasym}

In this section, we record some asymptotic expressions for $\bar{L}_{\eps}(t)$ which are useful for interpreting the main results.

\bigskip
\noindent {\bf The case $t \ll \eps^{-3/2}$}:  Using the Taylor expansion of $\arctan(x)$ at $x = 0$, we get
$$F(u) = u - \omega \arctan \bigg( \frac{u}{\omega} \bigg) = \frac{u^3}{3 \omega^2} - \frac{u^5}{5 \omega^4} + O(u^7) \quad\mbox{as }u \rightarrow 0.$$
Noting that $c = 3^{1/3} \omega^{2/3}$, we get after some algebra,
\begin{equation}\label{F-1asymp3/5}
F^{-1}(u) = cu^{1/3} + \frac{3u}{5} + O(u^{5/3})\quad\mbox{as }u \rightarrow 0.
\end{equation}
When $t \ll \eps^{-3/2}$, equations \eqref{L*def} and \eqref{Lbardef} give $\bar{L}_{\eps}(t) = \eps^{-1/2} F^{-1}(\eps^{3/2} t) + O(1)$, which means
$$\bar{L}_{\eps}(t) = ct^{1/3} + \frac{3}{5} \eps t + O(\eps^2 t^{5/3}) + O(1).$$
The first term dominates when $t \ll \eps^{-3/2}$.  When $t \lesssim \eps^{-1}$, the right-hand side simplifies to $ct^{1/3} + O(1)$, exactly as in the critical case when $\eps = 0$.

\bigskip
\noindent {\bf The case $t = \lambda \eps^{-3/2}$}:  In this case, we have $$\bar{L}_{\eps}(t) = \eps^{-1/2} F^{-1}(\lambda) + O(1).$$
One can write this as $\bar{L}_{\eps}(t) = \lambda^{-1/3} F^{-1}(\lambda) t^{1/3} + O(1).$
One can then see that $\lambda^{-1/3} F^{-1}(\lambda) \rightarrow c$ as $\lambda \rightarrow 0$, and $\lambda^{-1/3} F^{-1}(\lambda) \sim \lambda^{2/3}$ as $\lambda \rightarrow \infty$.

\bigskip
\noindent {\bf The case $\eps^{-3/2} \ll t \lesssim \eps^{-2}$}:  When $u$ is large, we can use the identity $\arctan(x) + \arctan(1/x) = \pi/2$ for $x>0$ to get $$F(u) = u - \omega \arctan \bigg( \frac{u}{\omega} \bigg) = u - \frac{\omega \pi}{2} + \frac{\omega^2}{u} + O(u^{-3}) \quad\mbox{as }u \rightarrow \infty.$$  It follows that
$$F^{-1}(u) = u + \frac{\omega \pi}{2} + O(u^{-1}).$$
We therefore have $$\bar{L}_{\eps}(t) = \eps t + \frac{\omega \pi}{2} \eps^{-1/2} + O(\eps^{-2} t^{-1}) + \frac{3}{2 \sqrt{2}} \log^+(\eps^{3/2} t).$$
Note that the third term on the right-hand side is smaller than the second term but may be larger than the fourth term.

\bigskip
\noindent {\bf The case $t \gg \eps^{-2}$}:  According to Theorem \ref{thmtight},
we do not know that $\mathcal{L}_{\eps}(t)$ is close to $L_{\eps}(t)$ when $t \gg \eps^{-2}$.  It remains an open question to find the correct value of $\mathcal{L}_{\eps}(t)$ in this case.  We can easily obtain upper and lower bounds.  Consider BBM with drift $-(\sqrt{2} + \eps)$.  If there were no killing at the origin, results of Bramson \cite{Bramson} establish that the right-most descendants of a particle at $x$ would be located near $x - \eps t - \frac{3}{2 \sqrt{2}} \log t$.  Therefore, we must have $${\cal L}_{\eps}(t) \geq \eps t + \frac{3}{2 \sqrt{2}} \log t + O_p(1)$$ because even with no killing at the origin, a descendant particle would not end up above the origin at time $t$ if it started below the expression on the right-hand side.  On the other hand, suppose instead of killing particles at the origin, we kill particles at time $s$ if they reach $\eps(t - s)$.  Then we have a translation of the problem in which the drift is critical and particles are killed at the origin.  The results on the critical case show that a particle must start out at $\eps t + ct^{1/3} + O(1)$ to have a good chance of having a descendant alive at time $t$.  It follows that $$\mathcal{L}_{\eps}(t) \leq \eps t + ct^{1/3} + O_p(1).$$

\subsection{Properties of $L_\eps(t)$, $L_\eps^*(t)$ and related functions}

Note that $F_{\eps}(u) < F(u) < u$ for all $u > 0$ and all $\eps \in (0, 1)$.  Therefore,
\begin{equation}\label{Fcompare}
u < F^{-1}(u) < F_{\eps}^{-1}(u)
\end{equation}
for all $u > 0$ and all $\eps \in (0, 1)$.  Furthermore, we have
\begin{equation}\label{F-1epsasymp}
\lim_{u \rightarrow \infty} u^{-1} F(u) = \lim_{u \rightarrow \infty}u^{-1} F_{\eps}(u) = 1
\end{equation}
for all $\eps \in (0, 1)$, and the convergence is uniform over $\eps \in (0, 1)$.  Therefore, there exists $u^* > 0$ such that $F_{\eps}(u) > u/2$ for all $u \geq u^*$ and all $\eps \in (0, 1)$, and therefore $u < F_{\eps}^{-1}(u) < 2u$ for all $u \geq u^*$.  This implies that when $t \geq u^* \eps^{-3/2}$,
\begin{equation}\label{L_eps<2epst}
\eps t \leq L_\eps(t)\leq 2\eps t.
\end{equation}
Note also that if we write $a = \frac{3}{2 \sqrt{2}} \eps^{-1/2}$, then by applying the Fundamental Theorem of Calculus and splitting up the integral based on whether the integrand is negative or positive, we get
\begin{align*}
F_{\eps}\left(\frac{3a}{2} \right) &= \int_0^{3a/2} F_{\eps}'(u) \: du \\
&= \int_0^{a} \frac{1}{\omega^2 + u^2}(u^2 - au) \: du + \int_a^{3a/2} \frac{1}{\omega^2 + u^2}(u^2 - au) \: du \\
&\leq \frac{1}{\omega^2 + a^2} \int_0^a (u^2 - au) \: du + \frac{1}{\omega^2 + a^2} \int_a^{3a/2} (u^2 - au) \: du \\
&= 0.
\end{align*}
Therefore, $F_{\eps}^{-1}(0) \geq \frac{3a}{2} = \frac{9}{4 \sqrt{2}} \eps^{1/2}$, and so
\begin{equation}\label{Leps0}
L_{\eps}(0) \geq \frac{9}{4 \sqrt{2}}.
\end{equation}

The next result establishes that $L_{\eps}(t)$ and $\bar{L}_{\eps}(t)$ differ only by a constant, which allows us to use $L_{\eps}(t)$ in place of $\bar{L}_{\eps}(t)$ in the remainder of the proof.

\begin{Lemma}\label{LepsLbar}
We have $$\eps^{-1/2} (F_{\eps}^{-1}(u) - F^{-1}(u)) = \frac{3}{2 \sqrt{2}} \log^+(u) + O(1),$$
where the $O(1)$ term is uniformly bounded over all $u > 0$ and $\eps \in (0,1)$.
\end{Lemma}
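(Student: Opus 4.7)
Set $v := F^{-1}(u)$ and $w := F_{\eps}^{-1}(u)$, both on the increasing branches. Since $F_{\eps}(s) = F(s) - \frac{3}{4\sqrt{2}}\eps^{1/2}\log(s^2/\omega^2 + 1) < F(s)$ for every $s > 0$ and $F$ is strictly increasing, one has $w > v$. Subtracting the two identities $F_{\eps}(w) = u = F(v)$ yields the key equation
\begin{equation}\label{LepsLbar_key}
F(w) - F(v) = \frac{3}{4\sqrt{2}}\,\eps^{1/2}\,\log\left(\frac{w^2}{\omega^2} + 1\right).
\end{equation}
The plan is to analyze $w-v$ via \eqref{LepsLbar_key} in two regimes, separated by a sufficiently large fixed constant $M_0$.

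For $u \geq M_0$, I would first sandwich $u \leq v \leq w \leq 2u$. The lower bound $u \leq v$ is immediate from \eqref{Fcompare}. For the upper bound, a direct computation shows $F_{\eps}(2u) \geq u$ whenever $u \geq M_0$ and $\eps \in (0,1)$ (use $\arctan \leq \pi/2$ and note that the logarithmic correction $\frac{3}{4\sqrt{2}}\eps^{1/2}\log(4u^2/\omega^2+1)$ is dominated by $u$ for large $u$), giving $w \leq 2u$. Applying the mean value theorem to $F$ on $[v,w]$ gives $F(w) - F(v) = F'(\xi)(w-v)$ with $\xi \in [u,2u]$, and $1/F'(\xi) = 1 + \omega^2/\xi^2 = 1 + O(1/u^2)$. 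Since also $\log(w^2/\omega^2 + 1) = 2\log u + O(1)$ uniformly in this range, substituting into \eqref{LepsLbar_key} yields $w - v = \frac{3}{2\sqrt{2}}\eps^{1/2}\log u + O(\eps^{1/2})$, which is the desired identity since $\log u = \log^+ u$ here.

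For $u \leq M_0$ the situation is more delicate: both $v$ and $w$ remain bounded above by a constant $W$ depending only on $M_0$ (the bound on $w$ follows from the same tail estimate $F_{\eps}(s) \geq s/2$ for $s$ sufficiently large and $\eps \in (0,1)$), but the mean value argument breaks down because $F'(s) = s^2/(s^2+\omega^2)$ can be arbitrarily small near $s = 0$, so $1/F'(\xi)$ is no longer uniformly bounded. This is the main technical obstacle. The trick is to exploit that when $w$ is small, the right side of \eqref{LepsLbar_key} is automatically small too: using $\log(1+x) \leq x$, the RHS is at most $\frac{3}{4\sqrt{2}}\eps^{1/2}w^2/\omega^2$. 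On the other hand, since $s^2/(s^2+\omega^2) \geq s^2/(W^2+\omega^2)$ on $[v,w] \subseteq [0,W]$, the left side satisfies $F(w) - F(v) \geq (w^3 - v^3)/(3(W^2+\omega^2))$. Combining gives $w^3 - v^3 \leq C\eps^{1/2}w^2$ for some constant $C = C(M_0)$, and the algebraic identity $w^3 - v^3 = (w-v)(w^2 + wv + v^2) \geq (w-v)w^2$ (valid since $v \geq 0$) then cancels the factor $w^2$, yielding $w - v \leq C\eps^{1/2}$ uniformly in this regime. Since $\log^+(u)$ is bounded on $[0, M_0]$, this gives $\eps^{-1/2}(w-v) = \frac{3}{2\sqrt{2}}\log^+(u) + O(1)$, and combining with the first case concludes the proof.
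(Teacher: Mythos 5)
Your proof is correct and takes essentially the same route as the paper's: both start from the identity $F(F^{-1}(u)) = F(F_\eps^{-1}(u)) - \frac{3}{4\sqrt 2}\eps^{1/2}\log(F_\eps^{-1}(u)^2/\omega^2+1)$, split at a fixed threshold, handle large $u$ via the mean value theorem together with the sandwich $u \leq F^{-1}(u) \leq F_\eps^{-1}(u) \leq 2u$, and handle small $u$ by bounding $F'$ below by a constant multiple of $s^2$ and using the identity $b^3 - a^3 \geq (b-a)b^2$ to cancel the quadratic factor after applying $\log(1+x)\leq x$. The only cosmetic difference is that you apply $\log(1+x)\leq x$ before the algebraic rearrangement while the paper does it afterward.
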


\noindent\textit{Proof.}
Because $F(F^{-1}(u)) = F_{\eps}(F_{\eps}^{-1}(u)) = u$, we have
\begin{equation}\label{FFeps}
F(F^{-1}(u)) = F(F_{\eps}^{-1}(u)) - \frac{3}{4 \sqrt{2}} \eps^{1/2} \log \bigg( \frac{F_{\eps}^{-1}(u)^2}{\omega^2} + 1 \bigg).
\end{equation}
We now consider two cases.  First, suppose $u \leq u^*$.  Write $Z = F^{-1}_1(u^*)^2 + \omega^2$.
Observe that for all $x \leq F^{-1}_\eps(u^*)$ and $\eps \in (0,1)$, because $\eps \mapsto F^{-1}_\eps(u^*)$ is nondecreasing, we have
\[
  F'(x)  = \frac{x^2}{x^2+ \omega^2} \geq \frac{x^2}{F^{-1}_\eps(u^*)^2 + \omega^2} \geq \frac{x^2}{Z}.
\]
Then, we have
\[
  F(F^{-1}_\eps(u)) - F(F^{-1}(u)) = \int_{F^{-1}(u)}^{F^{-1}_\eps(u)} F'(x) dx \geq \frac{1}{3Z} \left(F^{-1}_\eps(u)^3 - F^{-1}(u)^3\right).
\]
Note that for $0 < a < b$, we have $b^3 - a^3 = (b-a) (a^2 + ab + b^2) \geq (b-a) b^2$.  Therefore \eqref{FFeps} yields
\[
  \frac{3}{4 \sqrt{2}} \eps^{1/2} \log \bigg( \frac{F_{\eps}^{-1}(u)^2}{\omega^2} + 1 \bigg) \geq \frac{F^{-1}_\eps(u)^2}{3Z} (F_\eps^{-1}(u) - F(u)).
\]
Rearranging this inequality, and then using that $\log(1 + x) \leq x$ for $x \geq 0$, we have
$$\eps^{-1/2} (F_{\eps}^{-1}(u) - F^{-1}(u)) \leq \frac{9 Z}{4 \sqrt{2}} \cdot \frac{1}{F_{\eps}^{-1}(u)^2} \log \bigg( \frac{F_{\eps}^{-1}(u)^2}{\omega^2} + 1 \bigg) \leq \frac{9Z}{4 \sqrt{2}\omega^2},$$
which proves the result in the case when $u \leq u^*$.

Now, suppose $u > u^*$.  By the Mean Value Theorem, there exists $v \in [F^{-1}(u), F_{\eps}^{-1}(u)]$ such that $$F(F_{\eps}^{-1}(u)) - F(F^{-1}(u)) = F'(v)(F_{\eps}^{-1}(u) - F^{-1}(u)).$$  Combining this result with (\ref{FFeps}), we get
$$\eps^{-1/2}( F_{\eps}^{-1}(u) - F^{-1}(u)) = \frac{3}{4 \sqrt{2} F'(v)} \log \bigg( \frac{F_{\eps}^{-1}(u)^2}{\omega^2} + 1 \bigg).$$
Because $u < F_{\eps}^{-1}(u) < 2u$, we have $$\log \bigg( \frac{F_{\eps}^{-1}(u)^2}{\omega^2} + 1 \bigg) = 2 \log^+ u + O(1).$$  Also, $1/F'(v) = 1 + \omega^2/v^2 = 1 + O(u^{-2})$.  Therefore,
$$\eps^{-1/2}( F_{\eps}^{-1}(u) - F^{-1}(u)) = \frac{3}{4 \sqrt{2}} (1 + O(u^{-2}))(2 \log^+ u + O(1)) = \frac{3}{2 \sqrt{2}} \log^+(u) + O(1),$$
which proves the result when $u > u^*$.
\qedwhite
\\

Equation \eqref{Fcompare} implies that for all $t \geq 0$, we have
\begin{equation}\label{L>L*}
L_\eps(t)>L_\eps^*(t).
\end{equation}
Also, by \eqref{Lbardef} and Lemma \ref{LepsLbar}, for $0 \leq t\lesssim\eps^{-3/2}$, we have
\begin{equation}\label{LL*1}
 L_\eps(t)-L_\eps^*(t)\lesssim 1.
\end{equation}
We next compute the first and second order derivatives of $L^*_\eps(t)$ and $L_\eps(t)$. We have
\begin{equation}\label{L'}
(L_\eps^*)'(t)=\eps \bigg(1+\frac{\omega^2}{F^{-1}(\eps^{3/2}t)^2}\bigg),
\end{equation}
\begin{equation}\label{L''}
(L_\eps^*)''(t)=-2\omega^2\eps^{5/2}\frac{F^{-1}(\eps^{3/2}t)^2+\omega^2}{F^{-1}(\eps^{3/2}t)^5}.
\end{equation}
Therefore, $L_\eps^*(t)$ is an increasing concave function on $(0,\infty)$ for every $\eps>0$. By the Mean Value Theorem, we have for all $\eps>0$ and $0<s<t$,
\begin{equation}\label{mvt}
0<(L_\eps^*)'(t)(t-s)\leq L_\eps^*(t)-L_\eps^*(s)\leq (L_\eps^*)'(s)(t-s).
\end{equation}
For $L_\eps(t)$, we get for $t > 0$,
\begin{equation}\label{newL'}
L_\eps'(t)=\frac{2\sqrt{2}\eps(\omega^2+F^{-1}_\eps(\eps^{3/2}t)^2)}{2\sqrt{2}F_\eps^{-1}(\eps^{3/2}t)^2-3\eps^{1/2}F_\eps^{-1}(\eps^{3/2}t)},
\end{equation}
\begin{equation}\label{newL''}
L_\eps''(t)=\frac{8\eps^{5/2}(\omega^2+F_\eps^{-1}(\eps^{3/2}t)^2)(-3\eps^{1/2}F_\eps^{-1}(\eps^{3/2}t)^2-4\sqrt{2}\omega^2F_\eps^{-1}(\eps^{3/2}t)+3\omega^2\eps^{1/2})}{(2\sqrt{2}F_\eps^{-1}(\eps^{3/2}t)^2-3\eps^{1/2}F_\eps^{-1}(\eps^{3/2}t))^3}.
\end{equation}

Given $0 < r < s$ and a nonnegative function $f$ defined on an interval containing $(r,s)$, we define
$$\tau^f(r,s) =\int_r^s \frac{1}{f(u)^2}du.$$
In the next two lemmas, we compute $\tau^{K_\eps}(r,s)$ and $\tau^{H_\eps}(r,s)$.

\begin{Lemma}\label{tau(r,s)}
For $0\leq r<s\leq t$, we have
\[
\tau^{K_\eps}(r,s)=\frac{1}{\omega^2}\big(K_\eps(r)-K_\eps(s)-\eps(s-r)\big).
\]
\end{Lemma}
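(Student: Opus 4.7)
The plan is to derive a closed-form expression for $K_\eps'(s)$ and recognize that $\omega^2/K_\eps(s)^2$ equals $-K_\eps'(s) - \eps$; the identity then follows immediately by integrating over $[r,s]$.

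First, I would unwind the definitions. Since $K_\eps(s) = L_\eps^*(t-s)$, the chain rule gives $K_\eps'(s) = -(L_\eps^*)'(t-s)$. Plugging in (\ref{L'}) yields
\[
K_\eps'(s) = -\eps\,\Bigl(1 + \frac{\omega^2}{F^{-1}(\eps^{3/2}(t-s))^2}\Bigr).
\]
Next I would simplify $F^{-1}(\eps^{3/2}(t-s))$ using the definition $L_\eps^*(t-s) = \eps^{-1/2} F^{-1}(\eps^{3/2}(t-s))$ from (\ref{L*def}), which gives $F^{-1}(\eps^{3/2}(t-s)) = \eps^{1/2} K_\eps(s)$. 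Substituting and cancelling the factor of $\eps$ in the denominator produces the clean identity
\[
K_\eps'(s) = -\eps - \frac{\omega^2}{K_\eps(s)^2},
\]
equivalently $\omega^2/K_\eps(s)^2 = -K_\eps'(s) - \eps$.

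Finally I would integrate this identity from $r$ to $s$. The left side is $\omega^2 \tau^{K_\eps}(r,s)$, and the right side telescopes into $-(K_\eps(s) - K_\eps(r)) - \eps(s-r) = K_\eps(r) - K_\eps(s) - \eps(s-r)$. Dividing by $\omega^2$ yields the claim. There is no real obstacle here: the only subtle step is recognizing that the algebraic substitution $F^{-1}(\eps^{3/2}(t-s)) = \eps^{1/2} K_\eps(s)$ is exactly what turns the abstract derivative (\ref{L'}) into the autonomous ODE $K_\eps' = -\eps - \omega^2/K_\eps^2$ that motivated the definition of $L_\eps^*$ in the first place (see the heuristic derivation culminating in (\ref{Fderive})). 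This also confirms a posteriori that $K_\eps$ solves, exactly, the approximate differential equation driving the heuristic.
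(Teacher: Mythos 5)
Your proof is correct, and it is not quite the argument the paper gives for this particular lemma. The paper proves Lemma~\ref{tau(r,s)} by substituting $z=F^{-1}(\eps^{3/2}(t-u))$ directly in the integral defining $\tau^{K_\eps}(r,s)$, evaluating the resulting $\int (\omega^2+z^2)^{-1}\,dz$ as an arctangent, and then invoking the identity $u = F(F^{-1}(u)) = F^{-1}(u) - \omega\arctan(F^{-1}(u)/\omega)$ to convert the arctangents back into differences of $K_\eps$ values and the linear term. You instead compute $K_\eps'(s)$ from \eqref{L'}, substitute $F^{-1}(\eps^{3/2}(t-s))=\eps^{1/2}K_\eps(s)$ to obtain the autonomous ODE $K_\eps'=-\eps-\omega^2/K_\eps^2$, rewrite $1/K_\eps(u)^2$ as a total derivative, and integrate. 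Both routes are clean and correct; yours is arguably the more conceptual one, since it exposes directly that $K_\eps$ solves (exactly, not approximately) the ODE that motivated the definition of $L_\eps^*$ in \eqref{Fderive}--\eqref{K0eq}. Interestingly, your method is precisely the one the paper adopts for the companion Lemma~\ref{newtau(r,s)} about $H_\eps$, where an explicit change-of-variable-and-antiderivative computation would be messier; so your proof has the advantage of treating Lemmas~\ref{tau(r,s)} and~\ref{newtau(r,s)} in a uniform way, at the small cost of appealing to the derivative formula \eqref{L'} rather than working purely from the definition of $F$.
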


\noindent\textit{Proof.} After the change of variable ~$z=F^{-1}(\eps^{3/2}(t-u))$, we get
\begin{align*}
\tau^{K_\eps}(r,s)
&=\int_{r}^{s}\frac{1}{K_\eps(u)^2}du\\
&= \eps^{-1/2}\int_{F^{-1}(\eps^{3/2}(t-s))}^{F^{-1}(\eps^{3/2}(t-r))}\frac{1}{\omega^2+z^2}dz\nonumber\\
&=\frac{1}{\omega}\eps^{-1/2}\bigg(\arctan\bigg(\frac{F^{-1}(\eps^{3/2}(t-r))}{\omega}\bigg)-\arctan\bigg(\frac{F^{-1}(\eps^{3/2}(t-s))}{\omega}\bigg)\bigg).
\end{align*}
Note that
\[
u=F\big(F^{-1}(u)\big)=F^{-1}(u)-\omega\arctan\bigg(\frac{F^{-1}(u)}{\omega}\bigg).
\]
The lemma follows from the previous two equations.
\qedwhite

\begin{Lemma}\label{newtau(r,s)}
For $0\leq r<s\leq t$, we have
\[
\tau^{H_\eps}(r,s)=\frac{1}{\omega^2}\big(H_\eps(r)-H_\eps(s)-\eps(s-r)\big)-\frac{3}{\pi^2}\log\left(\frac{H_\eps(r)}{H_\eps(s)}\right).
\]
\end{Lemma}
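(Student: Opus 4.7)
My plan is to mimic the proof of Lemma~\ref{tau(r,s)}, but with $F$ replaced by $F_\eps$ throughout the change-of-variables computation, which produces extra $\log$-type terms that need to be carefully tracked.

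First I would make the substitution $z = F_\eps^{-1}(\eps^{3/2}(t-u))$, so that $H_\eps(u) = \eps^{-1/2} z$ and $du = -\eps^{-3/2} F_\eps'(z)\, dz$. After accounting for the reversal of orientation, the integral becomes
\[
\tau^{H_\eps}(r,s) \;=\; \eps^{-1/2} \int_{z_s}^{z_r} \frac{F_\eps'(z)}{z^2}\, dz, \qquad z_r := F_\eps^{-1}(\eps^{3/2}(t-r)), \quad z_s := F_\eps^{-1}(\eps^{3/2}(t-s)).
\]
Note $z_r = \eps^{1/2} H_\eps(r)$ and $z_s = \eps^{1/2} H_\eps(s)$, so $z_r/z_s = H_\eps(r)/H_\eps(s)$.

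Next I would expand the integrand. Since $F_\eps'(z) = (z^2 - \tfrac{3}{2\sqrt{2}}\eps^{1/2} z)/(\omega^2+z^2)$, a partial-fraction manipulation of $1/(z(\omega^2+z^2)) = 1/(\omega^2 z) - z/(\omega^2(\omega^2+z^2))$ gives
\[
\frac{F_\eps'(z)}{z^2} \;=\; \frac{1}{\omega^2+z^2} \;-\; \frac{3\eps^{1/2}}{2\sqrt{2}\,\omega^2} \cdot \frac{1}{z} \;+\; \frac{3\eps^{1/2}}{2\sqrt{2}\,\omega^2}\cdot\frac{z}{\omega^2+z^2}.
\]
Integrating term by term yields a combination of $\frac{1}{\omega}\arctan(z/\omega)$, $\log z$, and $\tfrac{1}{2}\log(\omega^2+z^2)$.

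The crucial step, which is also where I expect the main work to lie, is to use the very definition of $F_\eps$ to rewrite the arctan term. Namely, $\omega\arctan(z/\omega) = z - F_\eps(z) - \tfrac{3}{4\sqrt{2}}\eps^{1/2}\log(z^2/\omega^2+1)$, so
\[
\frac{1}{\omega}\bigl(\arctan(z_r/\omega)-\arctan(z_s/\omega)\bigr) \;=\; \frac{z_r-z_s - (F_\eps(z_r)-F_\eps(z_s))}{\omega^2} \;-\; \frac{3\eps^{1/2}}{4\sqrt{2}\,\omega^2}\log\!\left(\frac{\omega^2+z_r^2}{\omega^2+z_s^2}\right).
\]
Using $F_\eps(z_r)-F_\eps(z_s) = -\eps^{3/2}(s-r)$ and $z_r - z_s = \eps^{1/2}(H_\eps(r)-H_\eps(s))$, the first piece produces exactly $\eps^{-1/2}\omega^{-2}(\eps^{1/2}(H_\eps(r)-H_\eps(s)) + \eps^{3/2}(s-r))$, which after the global $\eps^{-1/2}$ prefactor gives the desired $\omega^{-2}(H_\eps(r)-H_\eps(s) - \eps(s-r))$ term. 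The $\log(\omega^2+z^2)$ piece arising here cancels exactly against the contribution of the third partial fraction, leaving only the $-\tfrac{3\eps^{1/2}}{2\sqrt{2}\,\omega^2}(\log z_r-\log z_s)$ term.

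Finally, multiplying by $\eps^{-1/2}$ and using $z_r/z_s = H_\eps(r)/H_\eps(s)$ together with $\omega^2 = \pi^2/(2\sqrt{2})$, which gives $\tfrac{3}{2\sqrt{2}\,\omega^2} = \tfrac{3}{\pi^2}$, produces the stated identity. The bookkeeping of signs and of the cancellation between the $\log(\omega^2+z^2)$ contributions is the only subtle point; once that cancellation is spotted, the rest is routine.
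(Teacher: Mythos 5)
Your approach is correct and does give the stated identity, but it takes a genuinely different route from the paper's. You imitate the proof of Lemma~\ref{tau(r,s)}: change variables $z = F_\eps^{-1}(\eps^{3/2}(t-u))$, expand $F_\eps'(z)/z^2$ by partial fractions, integrate each piece to get an arctan, a $\log z$, and a $\log(\omega^2+z^2)$ term, and then use the defining identity for $F_\eps$ to rewrite the arctan and observe that the two $\log(\omega^2+z^2)$ contributions cancel. The paper instead starts from the formula~\eqref{newL'} for $H_\eps'(u)$, algebraically rearranges it to the identity
\[
\frac{1}{H_\eps(u)^2}=-\frac{1}{\omega^2}H_\eps'(u)-\frac{1}{\omega^2}\eps+\frac{3}{2\sqrt{2}\omega^2}\frac{H'_\eps(u)}{H_\eps(u)},
\]
and integrates directly, so that the arctan and $\log(\omega^2+z^2)$ terms never appear at all. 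The two computations are algebraically equivalent (your partial-fraction decomposition is exactly the dual form of the paper's identity after the substitution $z=\eps^{1/2}H_\eps(u)$), but the paper's version is shorter because it packages everything into derivatives of the ``right'' quantities ($H_\eps$, $\eps u$, $\log H_\eps$) from the start. Your version has the pedagogical advantage of being a visible continuation of the $K_\eps$ computation, at the cost of having to spot the cancellation.

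One small slip you should fix: with your definitions $z_r=F_\eps^{-1}(\eps^{3/2}(t-r))$ and $z_s=F_\eps^{-1}(\eps^{3/2}(t-s))$, you have $F_\eps(z_r)-F_\eps(z_s)=\eps^{3/2}(t-r)-\eps^{3/2}(t-s)=+\eps^{3/2}(s-r)$, not $-\eps^{3/2}(s-r)$ as you wrote. Consequently the ``first piece'' is $\eps^{-1/2}\omega^{-2}\bigl(\eps^{1/2}(H_\eps(r)-H_\eps(s))-\eps^{3/2}(s-r)\bigr)$, which after the global $\eps^{-1/2}$ correctly yields $\omega^{-2}(H_\eps(r)-H_\eps(s)-\eps(s-r))$; as written, your two sign errors cancel each other and you land on the right final expression, but the intermediate lines are internally inconsistent.
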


\begin{proof}
By equation \eqref{newL'}, we observe that for $0 \leq u \leq t$, we have
\begin{align*}
  H_\eps'(u)  &= - \frac{2 \sqrt{2}\eps (\omega^2 + F^{-1}_\eps(\eps^{3/2}(t-u))^2)}{2 \sqrt{2} F^{-1}_\eps(\eps^{3/2}(t-u))^2 - 3 \eps^{1/2} F^{-1}_\eps(\eps^{3/2}(t-u))} = - \frac{2 \sqrt{2} (\omega^2 + \eps H_\eps(u)^2)}{2\sqrt{2} H_\eps(u)^2 - 3 H_\eps(u)},
\end{align*}
using that $F^{-1}_\eps(\eps^{3/2}(t-u)) = \eps^{1/2} H_\eps(u)$. As a result, for all $0 \leq u \leq t$, we have
\[
\frac{1}{H_\eps(u)^2}=-\frac{1}{\omega^2}H_\eps'(u)-\frac{1}{\omega^2}\eps+\frac{3}{2\sqrt{2}\omega^2}\frac{H'_\eps(u)}{H_\eps(u)}.
\]
Integrating $u$ from $r$ to $s$, the lemma follows, using that $\frac{3}{2\sqrt{2}\omega^2} = \frac{3}{\pi^2}$.
\end{proof}

\subsection{Estimating the density when particles are killed at $K_{\eps}(\cdot)$}

Let $f: [0, t)\to [0,\infty)$ be some positive smooth curve. The function $f$ could be either $H_\eps$ or $K_\eps$. Consider a BBM process with drift $-(\sqrt{2}+\eps)$ in which particles are killed when they reach either the origin or the right boundary $f(s)$ at some time $s$. We denote by $E^{f}_{r,x}$ the expectation for the process which starts from a single particle at $x\in (0,f(r))$ at time $r$. For simplicity, when $r=0$, we write $E_{0,x}^{f}$ as $E_x^{f}$. We denote by $P_{r,x}^{f}$ and $P_x^{f}$ the corresponding probability measures. For $0\leq r<s\leq t$, define $q^f_{r,s}(x,y)$ to be the density of particles at time $s$ that are descendants of a particle at the location $x$ at
time $r$. To be more precise, for $U$ a Borel subset of $(0,f(s))$, the expected number of particles in $U$ at time $s$ which are descendants of a particles at $x$ at time $r$ is
\[
\int_{U}q^f_{r,s}(x,y)dy.
\]
Let $p^f_{r,s}(x,y)$ be the density for the position of a single Brownian particle, without drift, at time $s$ when the particle starts from $x$ at time $r$ and is killed upon hitting either the origin or the curve $f(\cdot)$. By the many-to-one lemma and Girsanov's theorem, the density $q^f_{r,s}(x,y)$ can be computed from $p^f_{r,s}(x,y)$. The proof of Proposition 5.4 in \cite{ms20} gives us ways to estimate $p^f_{r,s}(x,y)$ and therefore $q^f_{r,s}(x,y)$. A key ingredient is the following lemma, which is Lemma 5.3 in \cite{ms20} and was derived there from earlier work in \cite{nov, Roberts}. Define
\begin{equation}\label{densityw}
\omega_s(x,y) =2\sum_{n=1}^{\infty} e^{-\pi^2n^2s/2}\sin (n\pi x)\sin (n\pi y).
\end{equation}

\begin{Lemma}\label{lem5.3}
Let $0\leq r < s \leq t$. Under $E_{r,x}^{f}$, we denote by $\{B_u\}_{r\leq u\leq t}$ the trajectory of the Brownian particle started from $x$ at time $r$ when the particle is killed if it reaches either 0 or $f (u)$ at time $u$. Define
\[
\phi^f_{r,s} = \left(\frac{f(r)}{f (s)}\right)^{1/2}\exp\left(\frac{f'(s)B_s^2}{2f(s)}-\frac{f'(r)B_r^2}{2f(r)}-\int_r^s \frac{f''(u)B_u^2}{2f(u)}du\right).
\]
For any bounded measurable function $g: [0,f(s)]\rightarrow \R$, we have
\[
E_{r,x}^{f}\left[\phi^f_{r,s} g(B_s)\right]=\frac{1}{f(s)}\int_0^{f(s)}g(y)\omega_{\tau^f(r,s)}\left(\frac{x}{f(r)}, \frac{y}{f(s)}\right)dy.
\]
\end{Lemma}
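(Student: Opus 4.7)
The strategy is to reduce the killed Brownian particle in the moving interval $[0,f(u)]$ to a standard Brownian motion on the fixed interval $[0,1]$ with Dirichlet killing, by combining a space rescaling, a Girsanov change of measure, and a deterministic time change; the multiplier $\phi^f_{r,s}$ will then emerge as the Radon--Nikodym derivative.

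Set $Y_u = B_u/f(u)$. By It\^o's formula,
\[
dY_u = \frac{1}{f(u)}\,dB_u - \frac{f'(u)}{f(u)}\,Y_u\,du,
\]
and $Y$ is killed exactly when $B$ is, i.e.\ when $Y$ hits $\{0,1\}$. Let $\mathbb{Q}$ be defined by
\[
\frac{d\mathbb{Q}}{d\mathbb{P}}\bigg|_{\mathcal{F}_s} = \exp\!\left(\int_r^s f'(u) Y_u\,dB_u - \frac{1}{2}\int_r^s f'(u)^2 Y_u^2\,du\right).
\]
By Girsanov's theorem, under $\mathbb{Q}$ the drift of $Y$ vanishes and $Y$ becomes a continuous local martingale with quadratic variation $d\langle Y\rangle_u = f(u)^{-2}\,du$. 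Performing the deterministic time change $v = \tau^f(r,u)$ identifies $Y$ under $\mathbb{Q}$ with a standard Brownian motion on $[0,1]$ absorbed at the endpoints, started at $x/f(r)$ at time $v=0$. Its transition density is the Dirichlet heat kernel on $[0,1]$, which is exactly $\omega_v(\cdot,\cdot)$ from \eqref{densityw}. Rescaling from $Y_s$ back to $B_s = f(s)Y_s$ yields
\[
E^{\mathbb{Q}}_{r,x}[g(B_s)] = \frac{1}{f(s)}\int_0^{f(s)} g(y)\,\omega_{\tau^f(r,s)}\!\left(\frac{x}{f(r)},\frac{y}{f(s)}\right)dy.
\]

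The step I expect to require the most care is the identification of $d\mathbb{Q}/d\mathbb{P}$ with the explicit pathwise expression $\phi^f_{r,s}$, i.e.\ the elimination of the stochastic integral $\int_r^s f'(u) Y_u\,dB_u$. For this I would apply It\^o's formula to $\psi(u,b):=f'(u)b^2/(2f(u))$. The boundary increment $\psi(s,B_s)-\psi(r,B_r)$ absorbs the stochastic integral and produces (i) an It\^o correction $\tfrac12\int_r^s f'(u)/f(u)\,du = \tfrac12\log(f(s)/f(r))$, which upon exponentiation gives the prefactor $(f(r)/f(s))^{1/2}$, and (ii) a bounded-variation term with integrand $B_u^2(f''(u)f(u)-f'(u)^2)/(2f(u)^2)$. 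The $(f')^2$ contribution cancels precisely with the $\tfrac12\int_r^s f'(u)^2 Y_u^2\,du$ term in the Girsanov density, leaving only $\int_r^s f''(u)B_u^2/(2f(u))\,du$, in perfect agreement with the definition of $\phi^f_{r,s}$. Combining $E_{r,x}^{f}[\phi^f_{r,s} g(B_s)] = E^{\mathbb{Q}}_{r,x}[g(B_s)]$ with the previous display then proves the lemma. A minor technicality is Novikov's condition for Girsanov; this is handled by localizing at stopping times at which $Y$ stays bounded away from $\{0,1\}$, which is harmless because the process is absorbed there anyway.
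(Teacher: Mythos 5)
The paper does not prove Lemma~\ref{lem5.3}; it quotes it from Lemma~5.3 of \cite{ms20}, which is itself derived from \cite{nov, Roberts}. Your proposal is a correct, self-contained derivation that follows the same underlying route as those references: rescale $Y_u = B_u/f(u)$, remove the induced drift $-f'(u)Y_u/f(u)$ by Girsanov with $\theta_u = f'(u) Y_u$, deterministically time-change via $v = \tau^f(r,u)$ so that $Y$ under $\mathbb{Q}$ becomes standard Brownian motion killed at $\{0,1\}$ with Dirichlet heat kernel $\omega_v$, and then eliminate the stochastic integral in the Girsanov exponent by applying It\^o's formula to $\psi(u,b)=f'(u)b^2/(2f(u))$. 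The algebra checks out: the $(f')^2$ terms cancel against the Girsanov compensator, the It\^o correction $-\tfrac12\int_r^s f'(u)/f(u)\,du$ exponentiates to the prefactor $(f(r)/f(s))^{1/2}$, and rescaling $B_s = f(s)Y_s$ produces both the $1/f(s)$ and the argument $y/f(s)$. One small simplification: no localization is needed for Novikov, since $Y_u \in [0,1]$ until absorption and $f'$ is bounded on the compact interval $[r,s]$, so $\int_r^s f'(u)^2 Y_u^2\,du$ is bounded by a deterministic constant and the exponential Girsanov density is a true martingale directly.
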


In the rest of this subsection, we use Lemma \ref{lem5.3} to estimate $q_{r,s}^{K_{\eps}}(x,y)$.  We estimate $q_{r,s}^{H_{\eps}}(x,y)$ in subsection \ref{qHsub}.

\begin{Lemma}\label{density}
For $0\leq r < s < t$, $x\in (0, K_\eps(r))$ and $y\in (0, K_\eps (s))$, we have for all $\eps>0$,
\begin{equation}\label{densityBrownian}
p_{r,s}^{K_\eps}(x,y)=\frac{1}{K_\eps(r)^{1/2}K_\eps(s)^{1/2}}e^{O(\eps K_\eps(r)+1/K_\eps(s))}\omega_{\tau^{K_\eps}(r,s)}\left(\frac{x}{K_\eps(r)}, \frac{y}{K_\eps(s)}\right),
\end{equation}
and
\begin{equation}\label{densitygeneral}
q_{r,s}^{K_\eps}(x,y)=\frac{1}{K_\eps(r)^{1/2}K_\eps(s)^{1/2}}e^{\rho(x-y)-(\sqrt{2}\eps+\eps^2/2)(s-r)+O(\eps K_\eps(r)+1/K_\eps(s))}\omega_{\tau^{K_\eps}(r,s)}\left(\frac{x}{K_\eps(r)}, \frac{y}{K_\eps(s)}\right).
\end{equation}
In particular, if (\ref{limtz}) holds and $ s \leq t-\newC\label{C_a}$ for some positive constant $\oldC{C_a}$, then for $0\leq r<s$, $x\in (0, K_\eps(r))$ and $y\in (0, K_\eps (s))$, we have
\begin{align}\label{densityulbs}
 q_{r,s}^{K_\eps}(x,y)
\asymp \frac{1}{K_\eps(r)^{1/2}K_\eps(s)^{1/2}}e^{\rho(x-y)-\sqrt{2}\eps(s-r)}\omega_{\tau^{K_\eps}(r,s)}\left(\frac{x}{K_\eps(r)}, \frac{y}{K_\eps(s)}\right).
\end{align}
\end{Lemma}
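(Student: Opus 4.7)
The plan is to apply Lemma \ref{lem5.3} with $f = K_\eps$ and obtain a pointwise bound on the multiplicative correction $\phi^{K_\eps}_{r,s}$ that is uniform over all trajectories $(B_u)_{r \leq u \leq s}$ confined to the strip. Since a surviving particle satisfies $0 < B_u < K_\eps(u)$, we have $B_u^2/K_\eps(u) \leq K_\eps(u)$, so each term in $\log \phi^{K_\eps}_{r,s}$ can be controlled using only the functions $K_\eps, K_\eps', K_\eps''$. Using \eqref{L'}, a direct computation gives
\[
K_\eps'(u)\, K_\eps(u) = \eps K_\eps(u) + \frac{\omega^2}{K_\eps(u)},
\]
so the two boundary terms $K_\eps'(s)B_s^2/(2K_\eps(s))$ and $K_\eps'(r)B_r^2/(2K_\eps(r))$ are each $O(\eps K_\eps(r) + 1/K_\eps(s))$ since $K_\eps$ is decreasing.

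For the integral term in $\log \phi^{K_\eps}_{r,s}$, using \eqref{L''} one has $K_\eps''(u) > 0$, and the change of variables $v = \eps^{3/2}(t-u)$ followed by $w = F^{-1}(v)$ (whose Jacobian $w^2/(w^2+\omega^2)$ exactly cancels the factor $(w^2+\omega^2)/w^4$ that appears) simplifies
\[
\int_r^s \frac{K_\eps''(u)\,K_\eps(u)}{2}\, du = \omega^2\!\left(\frac{1}{K_\eps(s)} - \frac{1}{K_\eps(r)}\right) = O\!\left(\frac{1}{K_\eps(s)}\right).
\]
Combining these estimates yields the deterministic bound $\phi^{K_\eps}_{r,s} = (K_\eps(r)/K_\eps(s))^{1/2}\, e^{O(\eps K_\eps(r) + 1/K_\eps(s))}$ uniformly in the trajectory. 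Plugging this into Lemma \ref{lem5.3}, dividing out, and taking $g$ to approach an indicator of a shrinking neighborhood of $y$ yields \eqref{densityBrownian} with the correct prefactor $1/(K_\eps(r)^{1/2}K_\eps(s)^{1/2})$.

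To get \eqref{densitygeneral}, I apply the many-to-one lemma combined with Girsanov's theorem: the branching at rate one contributes a factor $e^{s-r}$ to the first moment, and the shift from drift $-\rho$ to driftless Brownian motion contributes a factor $e^{\rho(x-y) - \rho^2(s-r)/2}$. Since $\rho = \sqrt{2}+\eps$ gives $\rho^2/2 - 1 = \sqrt{2}\eps + \eps^2/2$, the combined prefactor is $e^{\rho(x-y) - (\sqrt{2}\eps + \eps^2/2)(s-r)}$, as claimed.

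For \eqref{densityulbs}, it suffices to show that all of $\eps K_\eps(r)$, $1/K_\eps(s)$, and $\eps^2(s-r)$ are bounded by constants depending only on $\oldC{C_shorttime}$ and $\oldC{C_a}$. Under \eqref{limtz} we have $K_\eps(r) \leq L_\eps^*(t) \leq \eps^{-1/2}F^{-1}(\oldC{C_shorttime})$, so $\eps K_\eps(r) \lesssim \eps^{1/2}$; also $\eps^2(s-r) \leq \oldC{C_shorttime}\eps^{1/2}$. The assumption $s \leq t - \oldC{C_a}$ together with the monotonicity of $L_\eps^*$ gives $K_\eps(s) \geq L_\eps^*(\oldC{C_a})$, which is bounded below by a positive constant using the small-$u$ asymptotics $F^{-1}(u) \sim cu^{1/3}$. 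Hence $e^{O(\eps K_\eps(r) + 1/K_\eps(s)) - (\eps^2/2)(s-r)} \asymp 1$, giving \eqref{densityulbs}.

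The main technical obstacle is the uniform pointwise bound on $\phi^{K_\eps}_{r,s}$ — in particular the calculation that the apparently ``large'' integral $\int_r^s K_\eps''(u)K_\eps(u)\,du$ telescopes into the harmless $O(1/K_\eps(s))$ contribution, which is what makes the error term in the density sharp enough for the later first- and second-moment computations in Section \ref{numberhitting}.
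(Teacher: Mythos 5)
Your proposal matches the paper's proof very closely: both apply Lemma \ref{lem5.3} with $f=K_\eps$, bound $\log\phi^{K_\eps}_{r,s}$ termwise using $0<B_u<K_\eps(u)$, exploit the same change of variables $v=F^{-1}(\eps^{3/2}(t-u))$ to evaluate $\int_r^s \tfrac{|K_\eps''(u)|K_\eps(u)}{2}\,du = \omega^2\bigl(1/K_\eps(s)-1/K_\eps(r)\bigr)$, then use the many-to-one lemma with Girsanov's theorem for \eqref{densitygeneral}, and finish by bounding $\eps K_\eps(r)$, $1/K_\eps(s)$, and $\eps^2(s-r)$ under \eqref{limtz} and $s\leq t-\oldC{C_a}$.

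One point of bookkeeping, though: you assert $K_\eps'(u)K_\eps(u)=\eps K_\eps(u)+\omega^2/K_\eps(u)$ and $K_\eps''(u)>0$, but both signs are reversed. Since $K_\eps(u)=L_\eps^*(t-u)$ with $L_\eps^*$ increasing and concave (cf.\ \eqref{L'}, \eqref{L''}), one has $K_\eps'<0$ and $K_\eps''<0$; the correct identities are $K_\eps'(u)K_\eps(u)=-\eps K_\eps(u)-\omega^2/K_\eps(u)$ and $\int_r^s \tfrac{|K_\eps''(u)|K_\eps(u)}{2}\,du=\omega^2(1/K_\eps(s)-1/K_\eps(r))$. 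This does not damage your argument, because the estimate only ever uses the \emph{absolute values} of these quantities (the paper's display \eqref{rho_rs1} writes $|K_\eps'(s)K_\eps(s)/2|$, $|K_\eps''(u)|$ explicitly), so your final deterministic bound $\phi^{K_\eps}_{r,s}=(K_\eps(r)/K_\eps(s))^{1/2}e^{O(\eps K_\eps(r)+1/K_\eps(s))}$ is correct. Still, for a written-up proof you should insert the absolute-value signs so the intermediate identities are literally true.
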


\noindent{\it Proof.}
First, we apply Lemma \ref{lem5.3} to estimate $p_{r,s}^{K_\eps}(x,y)$. The key is to approximate ~$\phi_{r,s}^{K_\eps}$. Under $E_{r,x}^{K_\eps}$, we note that $0<B_u<K_\eps(u)$ for all $u\in [r,s]$. By equations (\ref{L'}) and (\ref{L''}), we have that under $E_{r,x}^{K_\eps}$,
\begin{align}\label{rho_rs1}
&\bigg|\frac{K'_\eps(s)B_s^2}{2K_\eps(s)}-\frac{K_\eps'(r)B_r^2}{2K_\eps(r)}-\int_r^s \frac{K_\eps''(u)B_u^2}{2K_\eps(u)}du\bigg| \nonumber\\
&\quad \leq \bigg|\frac{K_\eps'(s)K_\eps(s)}{2}\bigg|+\bigg|\frac{K_\eps'(r)K_\eps(r)}{2}\bigg|+\int_r^s \frac{|K_\eps''(u)|K_\eps(u)}{2}du \nonumber\\
&\quad =\frac{1}{2}\bigg(\eps K_\eps(s)+\frac{\omega^2}{K_\eps(s)}\bigg)+\frac{1}{2}\bigg(\eps K_\eps(r)+\frac{\omega^2}{K_\eps(r)}\bigg)+\omega^2\eps^2\int_r^s \frac{F^{-1}(\eps^{3/2}(t-u))^2+\omega^2}{F^{-1}(\eps^{3/2}(t-u))^4}du.
\end{align}
After the change of variable $v=F^{-1}(\eps^{3/2}(t-u))$, we get
\begin{equation}\label{rho_rs2}
\omega^2\eps^2\int_r^s \frac{F^{-1}(\eps^{3/2}(t-u))^2+\omega^2}{F^{-1}(\eps^{3/2}(t-u))^4}du=\omega^2\eps^{1/2}\int_{F^{-1}(\eps^{3/2}(t-s))}^{F^{-1}(\eps^{3/2}(t-r))}\frac{1}{v^2}dv=\omega^2\bigg(\frac{1}{K_\eps(s)}-\frac{1}{K_\eps(r)}\bigg).
\end{equation}
Equations (\ref{rho_rs1}) and (\ref{rho_rs2}) imply that for all $0\leq r<s<t$
\begin{equation}\label{rho_rs}
\bigg|\frac{K'_\eps(s)B_s^2}{2K_\eps(s)}-\frac{K_\eps'(r)B_r^2}{2K_\eps(r)}-\int_r^s \frac{K_\eps''(u)B_u^2}{2K_\eps(u)}du\bigg|\lesssim \eps K_\eps(r) +\frac{1}{K_\eps(s)}.
\end{equation}
Equation (\ref{densityBrownian}) follows from Lemma \ref{lem5.3} and (\ref{rho_rs}).

Next, by the many-to-one lemma and the Girsanov's theorem, we have
\begin{equation}\label{manygir}
q_{r,s}^{K_\eps}(x,y)=e^{s-r}e^{-\rho^2(s-r)/2+\rho(x-y)}p^{K_\eps}_{r,s}(x,y)=e^{\rho(x-y)-(\sqrt{2}\eps+\eps^2/2)(s-r)}p_{r,s}^{K_\eps}(x,y).
\end{equation}
Here, the factor $e^{s-r}$ represents the expected number of particles at time $s$ if there is no killing and the process starts from a single particle at time $r$, and $e^{-\rho^2(s-r)/2+\rho(x-y)}$ is the Girsanov factor which transforms Brownian motion with drift $-\rho$ to ordinary Brownian motion. Equation~(\ref{densitygeneral}) follows from (\ref{densityBrownian}) and (\ref{manygir}).

In particular, if $t\lesssim \eps^{-3/2}$ and $0\leq r < s \leq t-\oldC{C_a}$ for some positive constant $\oldC{C_a}$, then by (\ref{F-1asymp3/5}), we have for $\eps$ sufficiently small,
\begin{equation}\label{1/K}
\frac{1}{K_\eps(s)}\leq \frac{1}{\eps^{-1/2}F^{-1}(\eps^{3/2}\oldC{C_a})}\lesssim 1.
\end{equation}
Moreover, by (\ref{L_eps<2epst}) and  (\ref{L>L*}), we have
\begin{equation}\label{epsK}
\eps K_\eps(r)\leq \eps L_\eps(t)\lesssim 1.
\end{equation} Therefore, we have $|-\eps^2(s-r)/2+O(\eps K_\eps(r)+1/K_\eps(s))|\lesssim 1$ and (\ref{densityulbs}) follows from~(\ref{densitygeneral}).
\qedwhite
\\

For some choices of $r,s$ and $t$, the infinite sum expression for $q_{r,s}(x,y)$ can be simplified. Define
\[
J_t=\sum_{n=2}^{\infty} n^2e^{-\pi^2(n^2-1)t/2}.
\]

\begin{Lemma}\label{densityv}
Suppose (\ref{limtz}) holds. If $r+\newC\label{C_b}K_\eps(r)^2\leq s$ for some constants $\oldC{C_b}$
 and $s\leq t- \oldC{C_a}$, then for $x\in (0, K_\eps(r))$ and $y\in (0, K_\eps(s))$, we have
\[
 q_{r,s}^{K_\eps}(x,y)
 \asymp \frac{1}{K_{\eps}(r)^{1/2}K_{\eps}(s)^{1/2}}e^{\rho (x-y)-\sqrt{2}(K_\eps(r)-K_\eps(s))}\sin\left(\frac{\pi x}{K_{\eps}(r)}\right)\sin\left(\frac{\pi y}{K_{\eps}(s)}\right).
\]
\end{Lemma}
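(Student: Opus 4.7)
\noindent\textit{Proof proposal.} The plan is to start from Lemma~\ref{density}, which under (\ref{limtz}) and $s \leq t - \oldC{C_a}$ already gives
\[
q_{r,s}^{K_\eps}(x,y) \asymp \frac{1}{K_\eps(r)^{1/2}K_\eps(s)^{1/2}} e^{\rho(x-y)-\sqrt{2}\eps(s-r)} \omega_{\tau^{K_\eps}(r,s)}\!\left(\frac{x}{K_\eps(r)}, \frac{y}{K_\eps(s)}\right),
\]
and to show that, under the additional hypothesis $r + \oldC{C_b} K_\eps(r)^2 \leq s$, the infinite-sum expression for $\omega_{\tau^{K_\eps}(r,s)}$ is (up to constants) given by its $n=1$ term. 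The remaining step is then algebraic: plug in the closed form of $\tau^{K_\eps}(r,s)$ supplied by Lemma~\ref{tau(r,s)} and observe that the exponential factors match.

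First, I would establish that $\tau^{K_\eps}(r,s)$ is bounded away from $0$. Since $K_\eps$ is decreasing (by (\ref{L''})), we have
\[
\tau^{K_\eps}(r,s) = \int_r^s \frac{du}{K_\eps(u)^2} \geq \frac{s-r}{K_\eps(r)^2} \geq \oldC{C_b}.
\]
Next, using the standard estimate $|\sin(n\pi z)| \leq n |\sin(\pi z)|$ for $z \in [0,1]$ and $n \geq 1$ (which follows from $\sin(n\pi z)/\sin(\pi z)$ being the Chebyshev polynomial $U_{n-1}(\cos \pi z)$, bounded by $n$), the definition (\ref{densityw}) of $\omega_s$ gives
\[
\omega_s(x,y) = 2 e^{-\pi^2 s/2} \sin(\pi x) \sin(\pi y)\bigl(1 + R(s,x,y)\bigr),
\]
with $|R(s,x,y)| \leq J_s$. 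Since $J_s \downarrow 0$ as $s \to \infty$, by taking $\oldC{C_b}$ large enough we can ensure $J_{\tau^{K_\eps}(r,s)} \leq 1/2$ uniformly, so that
\[
\omega_{\tau^{K_\eps}(r,s)}\!\left(\frac{x}{K_\eps(r)}, \frac{y}{K_\eps(s)}\right) \asymp e^{-\pi^2 \tau^{K_\eps}(r,s)/2}\sin\!\left(\frac{\pi x}{K_\eps(r)}\right)\sin\!\left(\frac{\pi y}{K_\eps(s)}\right).
\]

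Finally, Lemma~\ref{tau(r,s)} gives $\tau^{K_\eps}(r,s) = \omega^{-2}(K_\eps(r) - K_\eps(s) - \eps(s-r))$, and since $\omega = 2^{-3/4}\pi$ we have $\pi^2/(2\omega^2) = \sqrt{2}$, hence
\[
e^{-\pi^2 \tau^{K_\eps}(r,s)/2} = e^{-\sqrt{2}(K_\eps(r)-K_\eps(s)) + \sqrt{2}\eps(s-r)}.
\]
Multiplying by the $e^{-\sqrt{2}\eps(s-r)}$ factor from Lemma~\ref{density} cancels the $\sqrt{2}\eps(s-r)$ term in the exponent, leaving exactly $e^{\rho(x-y)-\sqrt{2}(K_\eps(r)-K_\eps(s))}$ as claimed.

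The only delicate point is the choice of $\oldC{C_b}$: one has to verify that the bound $J_{\tau^{K_\eps}(r,s)} \leq 1/2$ can be made uniform in $\eps, r, s$ satisfying the hypotheses, which follows immediately from the lower bound $\tau^{K_\eps}(r,s) \geq \oldC{C_b}$ and monotonicity of $J_{\cdot}$. Everything else is direct substitution.
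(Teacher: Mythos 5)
Your proof is correct and follows the same route as the paper: apply the estimate from Lemma~\ref{density}, bound $\tau^{K_\eps}(r,s)$ below by $\oldC{C_b}$ using monotonicity of $K_\eps$, reduce $\omega_{\tau}$ to its $n=1$ term with error controlled by $J_{\tau}\le J_{\oldC{C_b}}$, and substitute the closed form of $\tau^{K_\eps}(r,s)$ from Lemma~\ref{tau(r,s)} to match the exponentials. The only cosmetic difference is that the paper cites the expansion $\omega_s(x,y)=2e^{-\pi^2 s/2}\sin(\pi x)\sin(\pi y)(1+D)$ with $|D|\le J_s$ from Lemma~5 of \cite{bbs13}, while you rederive it via $|\sin(n\pi z)|\le n|\sin(\pi z)|$; and you make explicit the need to choose $\oldC{C_b}$ so that $J_{\oldC{C_b}}<1$ (which the paper handles at (\ref{constb})).
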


\noindent\textit{Proof.} According to Lemma 5 in \cite{bbs13}, for all $0\leq r<s$, $x\in (0, K_\eps(r))$ and $y\in (0,K_\eps(s))$, we have
\begin{equation}\label{bbs13lem5}
\omega_{\tau^{K_\eps}(r,s)}\left(\frac{x}{K_\eps(r)}, \frac{y}{K_\eps(s)}\right)=2e^{-\pi^2\tau^{K_\eps}(r,s)/2}\sin\left(\frac{\pi x}{K_{\eps}(r)}\right)\sin\left(\frac{\pi y}{K_{\eps}(s)}\right)(1+D_{r,s}(x,y))
\end{equation}
where
\[
|D_{r,s}(x,y)|\leq J_{\tau^{K_\eps}(r,s)}.
\]
For $s\geq r+\oldC{C_b}K_\eps(r)^2$, we observe that
\begin{equation}\label{taulb}
\tau^{K_\eps}(r,s)=\int_r^s\frac{1}{K_\eps(u)^2}du\geq \frac{s-r}{K_\eps(r)^2}\geq \oldC{C_b},
\end{equation}
which implies that $J_{\tau^{K_\eps(r,s)}}\leq J_{\oldC{C_b}}$. Lemma \ref{densityv} follows from equations (\ref{densityulbs}), (\ref{bbs13lem5}), and (\ref{taulb}), and Lemma \ref{tau(r,s)}.
\qedwhite

\begin{Rmk}
Note that the density formula in Lemma \ref{densityv} is similar to the critical case. Define $K_0(s)=c(t-s)^{1/3}$. Consider the critical process where particles are killed when they hit either $0$ or $K_0(s)$ at time $s$. Let $q_{r,s}^{K_0}(x,y)$ be the corresponding density. According to Proposition 12 in ~\cite{bbs14} or Proposition 5.4 in \cite{ms20}, there exists a constant $\newC\label{C_c}$ such that for $r+K_0(r)^2\leq s\leq t-\oldC{C_c}$,
\[
q^{K_0}_{r,s}(x,y) \asymp\frac{1}{K_0(r)^{1/2}K_0(s)^{1/2}}e^{\sqrt{2}(x-y)-\sqrt{2}(K_0(r)-K_0(s))}\sin\left(\frac{\pi x}{K_0(r)}\right)\sin\left(\frac{\pi y}{K_0(s)}\right).
\]
\end{Rmk}

\subsection{Estimating the density when particles are killed at $H_{\eps}(\cdot)$}\label{qHsub}

\begin{Lemma}\label{densityH}
Suppose $t>0$. For $0\leq r < s \leq t$, $x\in (0, H_\eps(r))$ and $y\in (0, H_\eps (s))$, we have for all $\eps>0$,
\begin{equation}\label{densityBrownianH}
p_{r,s}^{H_\eps}(x,y)
=\frac{1}{H_\eps(r)^{1/2}H_\eps(s)^{1/2}}e^{O(\eps H_\eps(r)+1/H_\eps(s)+\eps|\log(\eps^{1/2}H_\eps(r))|)}\omega_{\tau^{H_\eps}(r,s)}\left(\frac{x}{H_\eps(r)}, \frac{y}{H_\eps(s)}\right).
\end{equation}
In particular, if (\ref{tles}) holds, then for $0\leq r< s \leq t$, $x\in (0, H_\eps(r))$ and $y\in (0, H_\eps (s))$, we have
\begin{align}\label{densityulbsH}
 q_{r,s}^{H_\eps}(x,y)
\asymp \frac{1}{H_\eps(r)^{1/2}H_\eps(s)^{1/2}}e^{\rho(x-y)-\sqrt{2}\eps(s-r)}\omega_{\tau^{H_\eps}(r,s)}\left(\frac{x}{H_\eps(r)}, \frac{y}{H_\eps(s)}\right).
\end{align}
\end{Lemma}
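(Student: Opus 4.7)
The proof will follow the same strategy as Lemma~\ref{density}, applying Lemma~\ref{lem5.3} to $f=H_\eps$ and controlling the three ingredients in the exponent of $\phi^{H_\eps}_{r,s}$. Two things distinguish the present case: first, because $H_\eps(u)=L_\eps(t-u)\geq L_\eps(0)\geq 9/(4\sqrt{2})$ for all $u\in[0,t]$ by \eqref{Leps0}, the curve $H_\eps$ never approaches the origin, so no cutoff of the form $s\leq t-\oldC{C_a}$ is needed. Second, the second derivative $H''_\eps$ is more delicate than $K''_\eps$, and this is where the additional $\eps|\log(\eps^{1/2}H_\eps(r))|$ term in the error comes from.

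First I would record a clean bound on $|H'_\eps|$. The identity derived in the proof of Lemma~\ref{newtau(r,s)} gives
\[
H'_\eps(u)\Bigl(1-\tfrac{3}{2\sqrt{2}\,H_\eps(u)}\Bigr)=-\eps-\frac{\omega^2}{H_\eps(u)^2},
\]
and since $H_\eps(u)\geq 9/(4\sqrt{2})$ makes the parenthetical factor at least $1/3$, we obtain $|H'_\eps(u)|\lesssim \eps+1/H_\eps(u)^2$. Because $0<B_u<H_\eps(u)$ under $E^{H_\eps}_{r,x}$ and $H_\eps$ is decreasing, the two boundary terms of the exponent are jointly bounded by a multiple of $\eps H_\eps(r)+1/H_\eps(s)$, exactly as in \eqref{rho_rs1}.

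The heart of the argument is the integral term $\int_r^s |H''_\eps(u)|\,H_\eps(u)/2\,du$. I would write $H'_\eps(u)=g(H_\eps(u))$ with $g(v)=-(\eps v^2+\omega^2)/(v^2-3v/(2\sqrt{2}))$, so that $H''_\eps(u)=g'(H_\eps(u))\,H'_\eps(u)$, and change variables $v=H_\eps(u)$, $du=dv/g(v)$, to obtain
\[
\int_r^s |H''_\eps(u)|\,H_\eps(u)\,du=\int_{H_\eps(s)}^{H_\eps(r)} |g'(v)|\,v\,dv.
\]
A direct calculation shows $|g'(v)|\,v=\tfrac{3\eps}{2\sqrt{2}\,v}+\tfrac{2\omega^2}{v^2}+O(\eps/v^2+1/v^3)$ for $v\geq 9/(4\sqrt{2})$, and hence the integral is $\lesssim \eps\log(H_\eps(r)/H_\eps(s))+1/H_\eps(s)$. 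Because $H_\eps(s)\geq 9/(4\sqrt{2})$ forces $|\log H_\eps(s)|=O(1)$, this is at most $\eps\log^+ H_\eps(r)+1/H_\eps(s)+O(\eps)$, which one easily rewrites as $O(\eps|\log(\eps^{1/2}H_\eps(r))|+1/H_\eps(s))$ since $\eps|\log\eps^{1/2}|=O(1)$.

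Combining the two preceding steps with Lemma~\ref{lem5.3} gives \eqref{densityBrownianH}. Equation \eqref{densityulbsH} then follows from the many-to-one lemma and Girsanov's theorem exactly as in \eqref{manygir}, once one checks that under hypothesis \eqref{tles} all error terms absorb into an $O(1)$ factor: $\eps^2(s-r)\lesssim \eps^2\cdot\eps^{-2}=O(1)$; $\eps H_\eps(r)\leq \eps L_\eps(t)\lesssim 1$ by \eqref{L_eps<2epst} and \eqref{L>L*}; $1/H_\eps(s)\leq 1/L_\eps(0)=O(1)$ by \eqref{Leps0}; and $\eps|\log(\eps^{1/2}H_\eps(r))|\lesssim \eps\log(1/\eps)=O(1)$ since $\eps^{1/2}H_\eps(r)\lesssim \eps^{-1/2}$. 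The principal obstacle is the change-of-variable computation for $\int|g'|\,v\,dv$ in Step~2; the logarithmic correction in $F_\eps$ that distinguishes $H_\eps$ from $K_\eps$ is precisely what generates the new $\eps|\log(\eps^{1/2}H_\eps(r))|$ contribution to the error, which has no analogue in Lemma~\ref{density}.
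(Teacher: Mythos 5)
Your proof is correct and follows the same basic strategy as the paper's (invoke Lemma~\ref{lem5.3} and bound the three pieces of the exponent in $\phi^{H_\eps}_{r,s}$), but you organize the key computation differently, and I think your version is slightly cleaner. The paper uses the explicit closed form \eqref{newL''} for $H_\eps''$ and then changes variables to $v = F_\eps^{-1}(\eps^{3/2}(t-u)) = \eps^{1/2}H_\eps(u)$; you instead exploit that $H_\eps$ solves an autonomous ODE $H_\eps'=g(H_\eps)$, write $H_\eps''=g'(H_\eps)H_\eps'$, and change variables to $v=H_\eps(u)$, which avoids \eqref{newL''} entirely and reduces the integral term to $\int_{H_\eps(s)}^{H_\eps(r)}|g'(v)|v\,dv$. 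Both routes land on the same bound $\lesssim\eps\log(H_\eps(r)/H_\eps(s))+1/H_\eps(s)$ for the integral term, and your derivation of $|H_\eps'|\lesssim\eps+1/H_\eps^2$ directly from the identity in Lemma~\ref{newtau(r,s)} is a nice simplification (the paper rederives it from \eqref{newL'} via \eqref{newFinv9pi}). Two small inaccuracies that do not affect the correctness: first, the claim that $H_\eps(s)\geq 9/(4\sqrt{2})$ ``forces $|\log H_\eps(s)|=O(1)$'' is false --- $H_\eps(s)$ can be as large as $O(\eps^{-1})$ --- but all you actually use is $\log H_\eps(s)\geq\log(9/(4\sqrt{2}))>0$, so that $\log(H_\eps(r)/H_\eps(s))\leq\log H_\eps(r)$, which is fine. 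Second, the final step ``which one easily rewrites as $O(\eps|\log(\eps^{1/2}H_\eps(r))|+1/H_\eps(s))$ since $\eps|\log\eps^{1/2}|=O(1)$'' is true but the stated justification is not sufficient; one actually needs $\tfrac12\eps\log(1/\eps)\lesssim\eps|\log(\eps^{1/2}H_\eps(r))|+1/H_\eps(r)$, which follows by separately considering $\eps^{1/2}H_\eps(r)\geq 1$ (then $1/H_\eps(r)\leq\eps^{1/2}$ and $\eps\log(1/\eps)\lesssim\eps^{1/2}\leq 1/H_\eps(r)$) and $\eps^{1/2}H_\eps(r)<1$ (then $\eps\log H_\eps(r)<\tfrac12\eps\log(1/\eps)\lesssim\eps^{1/2}<1/H_\eps(r)$). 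With that fleshed out, the argument is complete and the deduction of \eqref{densityulbsH} from \eqref{densityBrownianH} matches the paper.
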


\noindent\textit{Proof.} The proof is essentially the same as the proof of Lemma \ref{density}. The main difference is the approximation of $\phi^{H_\eps}_{r,s}$.  Under $E_{r,x}^{H_\eps}$,
\begin{align}\label{rho_rs1H}
&\bigg|\frac{H'_\eps(s)B_s^2}{2H_\eps(s)}-\frac{H_\eps'(r)B_r^2}{2H_\eps(r)}-\int_r^s \frac{H_\eps''(u)B_u^2}{2H_\eps(u)}du\bigg| \nonumber\\
&\hspace{0.3in} \leq \bigg|\frac{H_\eps'(s)H_\eps(s)}{2}\bigg|+\bigg|\frac{H_\eps'(r)H_\eps(r)}{2}\bigg|+\int_r^s \frac{|H_\eps''(u)|H_\eps(u)}{2}du.
\end{align}
We denote by $I_1$, $I_2$ and $I_3$ the three terms on the right hand side of equation (\ref{rho_rs1H}). By equation (\ref{newL'}), we see that
\begin{align}\label{densityI1int}
I_1
&=\frac{\sqrt{2}\eps^{1/2}(\omega^2+F^{-1}_\eps(\eps^{3/2}(t-s))^2)}{2\sqrt{2}F_\eps^{-1}(\eps^{3/2}(t-s))-3\eps^{1/2}}.
\end{align}
Note that for $s\leq t$, by \eqref{Leps0}, we have
\begin{equation}\label{newFinv9pi}
F_\eps^{-1}(\eps^{3/2}(t-s))\geq F_{\eps}^{-1}(0) \geq \frac{9}{4 \sqrt{2}}\eps^{1/2}.
\end{equation}
Putting together (\ref{densityI1int}) and (\ref{newFinv9pi}), we have
\begin{equation}\label{densityI1}
I_1\leq \frac{\eps^{1/2}(\omega^2+F^{-1}_\eps(\eps^{3/2}(t-s))^2)}{F_\eps^{-1}(\eps^{3/2}(t-s))}\lesssim \frac{1}{H_\eps(s)}+\eps H_\eps(s).
\end{equation}
Following the same argument, we get
\begin{equation}\label{densityI2}
I_{2} \lesssim \frac{1}{H_\eps(r)}+\eps H_\eps(r).
\end{equation}
To compute $I_3$, we apply equation (\ref{newL''}). Note that for $0\leq r\leq u\leq s\leq t$, the denominator in~(\ref{newL''}) is always positive by (\ref{newFinv9pi}) and thus $|H''_\eps(u)|$ is bounded above by
\[
\frac{8\eps^{5/2}(\omega^2+F_\eps^{-1}(\eps^{3/2}(t-u))^2)(3\eps^{1/2}F_\eps^{-1}(\eps^{3/2}(t-u))^2+4\sqrt{2}\omega^2F_\eps^{-1}(\eps^{3/2}(t-u))+3\omega^2\eps^{1/2})}{(2\sqrt{2}F_\eps^{-1}(\eps^{3/2}(t-u))^2-3\eps^{1/2}F_\eps^{-1}(\eps^{3/2}(t-u)))^3}.
\]
By (\ref{newFinv9pi}), after making the change of variable $v=F^{-1}_\eps(\eps^{3/2}(t-u))$, for $0 \leq u \leq t$ we have $v \geq \frac{9}{4 \sqrt{2}} \eps^{1/2}$ and therefore $2 \sqrt{2}v - 3 \eps^{1/2} \geq \frac{2 \sqrt{2}}{3} v$.  It follows that for $0\leq r\leq u\leq s\leq t$,
\begin{align}\label{densityI3}
I_3
&= \sqrt{2}\eps^{1/2}\int_{F^{-1}_\eps(\eps^{3/2}(t-s))}^{F^{-1}_\eps(\eps^{3/2}(t-r))} \frac{3\eps^{1/2}v^2+4\sqrt{2}\omega^2 v+3\omega^2 \eps^{1/2}}{v(2\sqrt{2}v-3\eps^{1/2})^2}dv\nonumber\\
&\lesssim \eps \int_{F^{-1}_\eps(\eps^{3/2}(t-s))}^{F^{-1}_\eps(\eps^{3/2}(t-r))} \frac{1}{v} \: dv + \eps^{1/2} \int_{F^{-1}_\eps(\eps^{3/2}(t-s))}^{F^{-1}_\eps(\eps^{3/2}(t-r))} \frac{v + \eps^{1/2}}{v^3} \: dv \nonumber \\
&\lesssim \eps \left|\log\left(\eps^{1/2}H_\eps(r)\right)\right|+\frac{1}{H_\eps(s)}.
\end{align}
Equation (\ref{densityBrownianH}) follows from Lemma \ref{lem5.3} and equations (\ref{rho_rs1H}), (\ref{densityI1})-(\ref{densityI3}).

Applying the many-to-one lemma and the Girsanov's theorem, it follows from (\ref{densityBrownianH}) that
\begin{align}\label{densitygeneralH}
q_{r,s}^{H_\eps}(x,y)
&=\frac{1}{H_\eps(r)^{1/2}H_\eps(s)^{1/2}}e^{\rho(x-y)-(\sqrt{2}\eps+\eps^2/2)(s-r)+O(\eps H_\eps(r)+1/H_\eps(s)+\eps|\log(\eps^{1/2}H_\eps(r))|)}\nonumber\\
&\hspace{0.2in}\times\omega_{\tau^{H_\eps}(r,s)}\left(\frac{x}{H_\eps(r)}, \frac{y}{H_\eps(s)}\right).
\end{align}
In particular, if (\ref{tles}) holds, then for $0\leq r< s \leq t$, by (\ref{L_eps<2epst}) and  (\ref{epsK}), we have $\eps H_\eps(r)\lesssim 1$ and $\eps\log(\eps^{1/2}H_\eps(r))\ll 1$. We also have $1/H_\eps(s)\lesssim 1$ by (\ref{newFinv9pi}).  Therefore, we have $$\left|-\frac{\eps^2(s-r)}{2}+O\left(\eps H_\eps(r)+\frac{1}{H_\eps(s)}+\eps \left|\log(\eps^{1/2}H_\eps(r))\right|\right) \right|\lesssim 1$$ and equation (\ref{densityulbsH}) follows from~(\ref{densitygeneralH}).
\qedwhite
\\

\begin{Lemma}\label{densityvH}
Suppose (\ref{tles}) holds and $t>0$. If $r+\oldC{C_b}H_\eps(r)^2\leq s$ for some constant $\oldC{C_b}$ and $s\leq t$, then for $x\in (0, H_\eps(r))$ and $y\in (0, H_\eps(s))$, we have
\[
 q_{r,s}^{H_\eps}(x,y)
 \asymp \frac{H_\eps(r)}{H_{\eps}(s)^{2}}e^{\rho (x-y)-\sqrt{2}(H_\eps(r)-H_\eps(s))}\sin\left(\frac{\pi x}{H_{\eps}(r)}\right)\sin\left(\frac{\pi y}{H_{\eps}(s)}\right).
\]
\end{Lemma}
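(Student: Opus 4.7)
The plan is to combine Lemma \ref{densityH}, the explicit asymptotic expansion of $\omega_s$ given in (\ref{bbs13lem5}) (Lemma~5 of \cite{bbs13}), and the closed-form formula for $\tau^{H_\eps}(r,s)$ from Lemma \ref{newtau(r,s)}. This mirrors the strategy used to deduce Lemma \ref{densityv} from Lemma \ref{density} and Lemma \ref{tau(r,s)}, but the logarithmic correction in Lemma \ref{newtau(r,s)} produces the characteristic $H_\eps(r)/H_\eps(s)^2$ prefactor (rather than $1/(H_\eps(r)H_\eps(s))^{1/2}$).

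First I would verify that $H_\eps$ is nonincreasing on $[0,t]$. By (\ref{newFinv9pi}), the denominator $2\sqrt{2}F_\eps^{-1}(\eps^{3/2}t)^2-3\eps^{1/2}F_\eps^{-1}(\eps^{3/2}t)$ in (\ref{newL'}) is bounded below by $\tfrac{3}{2}\eps^{1/2}F_\eps^{-1}(\eps^{3/2}t)>0$, so $L_\eps'>0$ and hence $H_\eps(u)=L_\eps(t-u)$ is decreasing in $u$. Consequently $H_\eps(u)\le H_\eps(r)$ for $u\in[r,s]$, so
\[
\tau^{H_\eps}(r,s)=\int_r^s\frac{du}{H_\eps(u)^2}\;\ge\;\frac{s-r}{H_\eps(r)^2}\;\ge\;\oldC{C_b}.
\]
Choosing $\oldC{C_b}$ large enough that $J_{\oldC{C_b}}\le 1/2$, the multiplicative error $1+D_{r,s}(x,y)$ in (\ref{bbs13lem5}) lies in $[1/2,3/2]$ and is therefore $\asymp 1$.

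Next I would evaluate $e^{-\pi^2\tau^{H_\eps}(r,s)/2}$ using Lemma \ref{newtau(r,s)}. Since $\omega^2=2^{-3/2}\pi^2$, we have $\pi^2/(2\omega^2)=\sqrt{2}$ and $(\pi^2/2)\cdot(3/\pi^2)=3/2$, so
\[
e^{-\pi^2\tau^{H_\eps}(r,s)/2}=\left(\frac{H_\eps(r)}{H_\eps(s)}\right)^{\!3/2}\!e^{-\sqrt{2}(H_\eps(r)-H_\eps(s))+\sqrt{2}\eps(s-r)}.
\]
Inserting this together with (\ref{bbs13lem5}) into (\ref{densityulbsH}), the factors $e^{\pm\sqrt{2}\eps(s-r)}$ cancel, and the prefactor $(H_\eps(r)H_\eps(s))^{-1/2}\cdot(H_\eps(r)/H_\eps(s))^{3/2}$ collapses to $H_\eps(r)/H_\eps(s)^{2}$, yielding the claimed estimate.

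The only mildly delicate point, and thus the main (minor) obstacle, is confirming the monotonicity of $H_\eps$ in order to extract a clean lower bound on $\tau^{H_\eps}(r,s)$ and thereby control the error term $D_{r,s}(x,y)$; once this is in place everything else is bookkeeping with the identity $\pi^2/(2\omega^2)=\sqrt{2}$.
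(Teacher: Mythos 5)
Your proposal is correct and follows essentially the same route the paper takes: combine (\ref{densityulbsH}) from Lemma \ref{densityH} with the expansion (\ref{bbs13lem5}) of $\omega_{\tau}$ and the closed form for $\tau^{H_\eps}(r,s)$ from Lemma \ref{newtau(r,s)}, exactly as Lemma \ref{densityv} is deduced from Lemma \ref{density} and Lemma \ref{tau(r,s)}. Your explicit check that $H_\eps$ is decreasing (so that $\tau^{H_\eps}(r,s) \geq (s-r)/H_\eps(r)^2 \geq \oldC{C_b}$) is a useful clarification of a step the paper leaves implicit, and your algebra with $\pi^2/(2\omega^2)=\sqrt{2}$ and the $3/2$ exponent correctly produces the $H_\eps(r)/H_\eps(s)^2$ prefactor.
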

\noindent\textit{Proof.} Applying equation (\ref{densityulbsH}) in place of equation (\ref{densityulbs}) and Lemma \ref{newtau(r,s)} in place of Lemma \ref{tau(r,s)}, the proof is a word-by-word repetition of the proof of Lemma \ref{densityv}.
\qedwhite

\section{Number of particles hitting the right boundary}\label{numberhitting}

Consider a process in which particles are killed when they reach either the origin or the right boundary $f(\cdot)$. For $0\leq r<s\leq t$, suppose $f$ is a smooth function in the interval $[0,s]$. Let $R_{t}^f(r,s)$ be the number of particles that are killed at the right boundary $f(\cdot)$ during the time interval $[r,s]$. This section aims to give first and second moment estimates of $R_t^{K_\eps}(r,s)$ and $R_t^{H_\eps}(r,s)$ following the argument of Lemmas 5.8 and 5.10 in \cite{ms20} and Lemma 16 in \cite{bbs14}. A key input is the following lemma which shows that the rate at which the Brownian particle hits the right boundary $f(\cdot)$ can be computed from the derivative of the density at the right boundary. This result is well-known in the literature. Lemma \ref{lem5.7} is adapted from Lemma 5.7 in \cite{ms20}.

\begin{Lemma}\label{lem5.7}
Under $P_{r,x}^{f}$, we denote by $\{B_u\}_{r\leq u\leq s}$ the trajectory of the Brownian particle started from $x$ at time $r$ when the particle is killed if it reaches either 0 or $f(u)$ at time $u$. Let $\tau^+$ and $\tau^-$ be the hitting times of the right boundary $f(\cdot)$ and the origin respectively. Then for $r\leq u\leq s$, we have
\begin{equation*}
P_{r,x}^{f}(\tau^+<\tau^-, \tau^+\in du)=-\frac{1}{2}\frac{\partial}{\partial y} p_{r,u}^f(x,y)\bigg|_{y=f(u)}du.
\end{equation*}
\end{Lemma}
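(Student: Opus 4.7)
The statement is a version of the classical flux identity for killed diffusions: for Brownian motion killed on the lateral boundary of a smooth parabolic domain, the exit density through a portion of that boundary equals $-\tfrac{1}{2}$ times the inward normal derivative of the transition density there. My plan proceeds in three steps. First, I would express the survival probability as
\[
S(u) := P_{r,x}^f(\tau^+\wedge\tau^- > u) = \int_0^{f(u)} p_{r,u}^f(x,y)\,dy
\]
and differentiate in $u$ via Leibniz's rule. The moving-boundary term $f'(u)\,p_{r,u}^f(x,f(u))$ vanishes because $p_{r,u}^f(x,f(u))=0$, and on the interior $\partial_u p_{r,u}^f = \tfrac12 \partial_y^2 p_{r,u}^f$ by the forward Kolmogorov equation. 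Integrating over $(0,f(u))$ then gives
\[
-S'(u) \,=\, -\tfrac12\,\partial_y p_{r,u}^f(x,y)\big|_{y=f(u)} \,+\, \tfrac12\,\partial_y p_{r,u}^f(x,y)\big|_{y=0},
\]
which identifies the total killing rate $P_{r,x}^f(\tau^+\wedge\tau^-\in du)/du$ as a sum of two boundary-flux terms with the correct signs (the term at $y=f(u)$ is nonnegative since $p_{r,u}^f(x,\cdot)$ vanishes at $f(u)$ and is nonnegative in the interior, and similarly at $y=0$).

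Second, I would assign the flux term at $y=f(u)$ to the density of $\{\tau^+<\tau^-,\tau^+\in du\}$, and, by symmetry, the term at $y=0$ to that of $\{\tau^-<\tau^+,\tau^-\in du\}$, via a test-function / It\^o argument. For small $\delta>0$, take a $C^2$ cutoff $\psi_\delta$ supported in $[f(u)-\delta, f(u)]$ with $\psi_\delta(f(u))=1$, apply It\^o's formula to $\psi_\delta(B_v)$ stopped at $\tau^+\wedge \tau^-\wedge v$, and take expectations. The left-hand side then equals $P_{r,x}^f(\tau^+<\tau^-,\tau^+\leq v)+O(\delta)$, while the right-hand side, after two integrations by parts using the Dirichlet boundary conditions and the forward heat equation, reduces to $-\tfrac12\int_r^v \partial_y p_{r,w}^f(x,y)\big|_{y=f(w)}\,dw + O(\delta)$. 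Letting $\delta\downarrow 0$ yields the desired identification, and subtracting from the Step~1 identity gives the companion formula at $y=0$.

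The smoothness of $f$ on $[0,s]$ makes the parabolic domain $\{(u,y): r<u\leq s,\,0<y<f(u)\}$ of class $C^2$, and standard parabolic boundary regularity gives that $p_{r,u}^f$ is smooth up to the lateral boundary for $u>r$; this justifies the differentiations, interchanges of limits, and boundary evaluations used throughout. The main obstacle is making the splitting step rigorous in the presence of a moving upper boundary; a cleaner alternative is to flatten the boundary via the change of variables $Z_v := B_v/f(v)$, which satisfies an SDE in the fixed strip $[0,1]$ with a smooth additional drift, apply the classical flux identity in the fixed-boundary setting (cf.\ \cite{nov}), and transfer the formula back through the explicit change of variables.
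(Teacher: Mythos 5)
The paper itself offers no proof of this lemma: immediately before the statement it writes ``This result is well-known in the literature. Lemma~\ref{lem5.7} is adapted from Lemma~5.7 in \cite{ms20},'' and leaves the verification to that reference. So there is no paper proof to compare against directly; what you have provided is a self-contained argument that the authors chose not to spell out.

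Your overall plan is sound and is essentially the standard way to establish this flux identity. Step~1 (Leibniz plus the forward Kolmogorov equation to obtain the total killing rate as a sum of two boundary fluxes) is correct, including the observation that the moving-boundary term drops out because $p^f_{r,u}(x,f(u))=0$, and the signs of the two flux terms are right. However, as written, the It\^o/cutoff argument in Step~2 has a real defect that you correctly flag at the end: a cutoff $\psi_\delta$ supported in the \emph{fixed} interval $[f(u)-\delta,f(u)]$ does not track the moving boundary $f(v)$ for $v$ near $u$, so ``$\psi_\delta(B_{\tau^+})=1$'' fails in general and the claimed $O(\delta)$ error is not justified without making $\psi_\delta$ time-dependent and controlling the resulting cross terms. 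Your proposed remedy — flattening via $Z_v=B_v/f(v)$ so that the process lives in the fixed strip $[0,1]$ with diffusion coefficient $f(v)^{-2}$ and a smooth drift, applying the classical fixed-boundary flux identity, and transferring back through the explicit Jacobian $p^f_{r,u}(x,y)=f(u)^{-1}\tilde p_{r,u}(x/f(r),y/f(u))$ — is indeed the cleaner route and does give exactly the stated formula, since the extra factor $f(u)^{-2}$ from the diffusion coefficient combines with the factor $f(u)^{-1}\cdot f(u)^{-1}$ from the change of variables in $\partial_y$ and the density. This is in the same spirit as the treatment in \cite{nov} and \cite{ms20}. In short: the proposal is correct, provided the splitting in Step~2 is carried out by the flattening argument (or with a genuinely time-dependent cutoff) rather than by the fixed-interval cutoff as first described.
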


We need two more results which compute integrals involving $\omega_u(x,y)$. For a measurable set $S\subset \R_+$, define
\[
I(x, S)=\int_S e^{\pi^2 u/2}\left(-\frac{1}{2}\frac{\partial}{\partial y}\omega_u (x,y)\bigg|_{y=1}\right)du.
\]
We denote by $\lambda(S)$ the Lebesgue measure of $S$. Lemma 7.1 in \cite{m16} states that there exists a universal constant $\newC\label{C_mai}$ such that for every $x\in [0,1]$ and every measurable set $S\subset \R_+$,
\begin{equation}\label{lem7.1}
|I(x,S)-\pi\lambda(S)\sin (\pi x)|\leq \oldC{C_mai}\min\left\{ x, J_{\inf S}\sin(\pi x)\min\{1, \lambda(S)\}\right\}.
\end{equation}
The following lemma is adapted from Lemma 5.1 and 5.2 of \cite{ms20}.
\begin{Lemma}\label{lem5.1/2}
For all $x\in (0,1)$ and $y\in (0,1/2]$, we have
\begin{equation}\label{lem5.1small}
\int_0^1 e^{\pi^2 u/2}\sup_{y'\in [0,y]}\omega_u(x,y')du=O\left(y(1-x)\right).
\end{equation}
If $s>\oldC{C_2ndH}$ for some positive constant $\newC\label{C_2ndH}$, then for all $x\in (0,1)$ and $y\in (0,1/2]$,
\begin{equation}\label{lem5.1s}
\int_{\oldC{C_2ndH}}^s e^{\pi^2 u/2}\sup_{y'\in [0,y]}\omega_u(x,y')du=O\left(ys\sin(\pi x)\right).
\end{equation}
For all $x\in (0,1)$, we have
\begin{equation}\label{lem5.2}
\int_0^s e^{\pi^2u/2}\int_0^1 \omega_u(x,y)dydu=O(s\sin(\pi x)+(1-x)).
\end{equation}
\end{Lemma}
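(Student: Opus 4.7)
All three estimates will follow from combining the eigenfunction expansion \eqref{densityw} with the method-of-images representation
\[
  \omega_u(x,y') \;=\; \sum_{n \in \mathbb{Z}} \bigl[g_u(y'-x+2n) - g_u(y'+x+2n)\bigr], \qquad g_u(z) := (2\pi u)^{-1/2} e^{-z^2/(2u)},
\]
used in the small-$u$ regime where the series is inefficient. The trigonometric inputs are the elementary bounds $|\sin(n\pi y')| \leq n\pi y$ for $y' \leq y$ and $|\sin(n\pi x)| \leq n\pi(1-x)$, together with the Chebyshev identity $|\sin(n\pi x)| \leq n\sin(\pi x)$ for every integer $n\geq 1$ and $x\in(0,1)$ (since $\sin(n\pi x)/\sin(\pi x) = U_{n-1}(\cos(\pi x))$, bounded by $n$). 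The proofs parallel the arguments in the corresponding lemmas of \cite{ms20}.

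For \eqref{lem5.1s}, after multiplying \eqref{densityw} by $e^{\pi^2 u/2}$ we obtain a principal piece $2\sin(\pi x)\sin(\pi y')$ together with a tail $\sum_{n\geq 2} 2 e^{-(n^2-1)\pi^2 u/2}\sin(n\pi x)\sin(n\pi y')$. Because $y\leq 1/2$, the principal piece has $\sup_{y'\leq y} = 2\sin(\pi x)\sin(\pi y)\leq 2\pi y\sin(\pi x)$, and integrating on $[\oldC{C_2ndH},s]$ gives $O(ys\sin(\pi x))$. For the tail we use $|\sin(n\pi x)||\sin(n\pi y')|\leq n^2\pi y\sin(\pi x)$ (Chebyshev in $x$, linear in $y'$); then $\int_{\oldC{C_2ndH}}^{\infty}\sum_{n\geq 2} n^2 e^{-(n^2-1)\pi^2 u/2}\,du$ is a convergent series, contributing only $O(y\sin(\pi x))$, which is absorbed by the principal part once $s \geq \oldC{C_2ndH}$. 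For \eqref{lem5.2}, term-by-term integration in $y$ uses $\int_0^1\sin(n\pi y)\,dy = (1-(-1)^n)/(n\pi)$, which vanishes for even $n$, leaving $\int_0^1 \omega_u(x,y)\,dy = \frac{4}{\pi}\sum_{k\geq 0}\frac{\sin((2k+1)\pi x)}{2k+1}e^{-(2k+1)^2\pi^2 u/2}$. Multiplying by $e^{\pi^2 u/2}$ and integrating on $[0,s]$: the $k=0$ term contributes $\frac{4s}{\pi}\sin(\pi x)=O(s\sin(\pi x))$; for $k\geq 1$ the time-integral is bounded by $\frac{2}{((2k+1)^2-1)\pi^2}$, while $|\sin((2k+1)\pi x)|\leq (2k+1)\pi(1-x)$ and $\sum k^{-2}<\infty$ give $O(1-x)$.

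For \eqref{lem5.1small}, the Fourier bound $|\omega_u(x,y')|\leq 2\pi^2(1-x)y \sum_n n^2 e^{-n^2\pi^2 u/2}$ diverges like $u^{-3/2}$ as $u\downarrow 0$ and is not integrable on $(0,1)$. We split $(0,1)=(0,u_0]\cup[u_0,1)$ for a fixed $u_0>0$; on $[u_0,1)$ the Fourier bound is integrable and contributes $O(y(1-x))$ directly. On $(0,u_0]$ we switch to the image representation. For $y'\leq 1/2$ and $x\in(0,1)$, every image $g_u(y'\pm x+2n)$ with $n\neq 0$ has argument of absolute value $\geq 1/2$, hence contributes $O(e^{-1/(8u)})$ uniformly in $y'$ and integrates to an exponentially small constant. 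The surviving $n=0$ piece is $g_u(y'-x)(1-e^{-2xy'/u})$, which gives the $y$ factor through the killing at the origin. To simultaneously extract the $(1-x)$ factor we exploit the first upper-boundary image at $2-x$: the truncated identity $\omega_u(x,y')\leq g_u(y'-x)(1-e^{-2xy'/u})(1-e^{-2(1-x)(1-y')/u})+O(e^{-c/u})$ (proved by reorganizing the image sum as a product of the two one-sided survival factors up to exponentially small corrections) holds uniformly. Bounding each complementary-exponential factor by $\min(1,2xy/u)$ resp.\ $\min(1,2(1-x)/u)$, taking the sup in $y'\leq y$, and performing the substitution $v=(y'-x)^2/(2u)$ reduces the resulting integral to a Gamma-type one that is $O(y(1-x))$.

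The main obstacle is the small-$u$ part of \eqref{lem5.1small}: neither the eigenfunction series (non-integrable at $0$) nor a one-sided killing bound alone can produce the desired product $y(1-x)$. The resolution is to combine the two killing factors and control the $y'$-supremum carefully via the image representation. The other two estimates are considerably easier because their integration domains are bounded away from $u=0$ (for \eqref{lem5.1s}) or because only the integrated-in-$y$ density, for which cancellation already kills the even modes, is needed (for \eqref{lem5.2}).
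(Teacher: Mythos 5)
The paper does not prove this lemma at all; it simply invokes equations~(5.8), (5.9) and Lemma~5.2 of \cite{ms20}. So you are taking a genuinely different route by attempting a self-contained proof. Your arguments for \eqref{lem5.1s} and \eqref{lem5.2} are correct: peeling off the $n=1$ mode, applying $|\sin(n\pi x)|\le n\sin(\pi x)$ and $|\sin(n\pi y')|\le n\pi y$ to the tail, and (for \eqref{lem5.2}) using the cancellation of even modes after integrating in $y$, all work as you describe, and are a clean way to obtain these bounds.

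For \eqref{lem5.1small}, however, there is a genuine gap. The bound you claim,
\[
\omega_u(x,y') \le g_u(y'-x)\bigl(1-e^{-2xy'/u}\bigr)\bigl(1-e^{-2(1-x)(1-y')/u}\bigr)+O\!\bigl(e^{-c/u}\bigr),
\]
with an \emph{additive} error that is ``exponentially small in $u^{-1}$ uniformly in $y'$'' but otherwise free of $y$ and $x$, cannot yield the conclusion. The error integrates against $e^{\pi^2 u/2}\,du$ on $(0,u_0]$ to a fixed positive constant, whereas the target $O(y(1-x))$ becomes arbitrarily small as $y\downarrow 0$ or $x\uparrow 1$; an $O(1)$ remainder therefore swamps the bound precisely in the regime where the lemma is most stringent. (Your earlier remark that each discarded image ``integrates to an exponentially small \emph{constant}'' is the same issue.) The multiplicative inequality $\omega_u(x,y')\le g_u(y'-x)(1-e^{-2xy'/u})(1-e^{-2(1-x)(1-y')/u})$ does in fact hold \emph{exactly}, with no additive error --- for instance by the Markov property at time $u/2$ with one-sided killing on each half, $\omega_u(x,y')\le\int_0^1\omega^1_{u/2}(x,z)\,\omega^0_{u/2}(z,y')\,dz$, together with Chapman--Kolmogorov for $g$; or by Gaussian positive association, since under the bridge measure $\{\min>0\}$ is increasing and $\{\max<1\}$ is decreasing. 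But ``reorganizing the image sum as a product up to exponentially small corrections'' does not establish this, and as written it leaves a remainder that is too large. A second, milder, gap: even granting the exact product bound, the final integral is not a single Gamma-type reduction. If you bound the two exponential factors by $\min(1,2xy/u)$ and $\min(1,2(1-x)/u)$ and take the sup of $g_u(y'-x)$ separately, the integral over $(0,u_0]$ evaluates to quantities like $xy(1-x)/(x-y)^3$ or $(xy(1-x))^{1/4}$ depending on which $\min$'s bind, neither of which is $O(y(1-x))$ when $|x-y|$ is small; one needs to split into cases according to the relative sizes of $x-y$, $xy$, and $1-x$ before the claimed order emerges.
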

\noindent\textit{Proof.} Equation (\ref{lem5.1small}) follows from equation (5.9) in \cite{ms20}. Equation (\ref{lem5.1s}) follows from equation (5.8) in \cite{ms20}. Equation (\ref{lem5.2}) is Lemma 5.2 in \cite{ms20}.
\qedwhite
\\

\subsection{Moment estimates of $R_t^{K_\eps}(r,s)$}
\begin{Lemma}\label{lem15}
Suppose (\ref{limtz}) holds and $s \leq t-\oldC{C_a}$ for some positive constant $\oldC{C_a}$. Then for $0\leq v\leq r<s$ and $x\in (0, K_\eps(v))$, we have
\begin{equation*}
E_{v,x}^{K_{\eps}}\left[R_{t}^{K_\eps}(r,s)\right]\lesssim e^{\rho(x-K_\eps(v))}\left(\tau^{K_\eps}(r,s)\sin\left(\frac{\pi x}{K_{\eps}(v)}\right)+\frac{x}{K_\eps(v)}\right).
\end{equation*}
\end{Lemma}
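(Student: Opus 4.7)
The plan is to express the expected hitting number as an integral of a hitting rate via Lemma \ref{lem5.7}, substitute the density estimate from Lemma \ref{density}, and then reduce the resulting integral to one of the form $I(\cdot, S)$ controlled by (\ref{lem7.1}).

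Combining the many-to-one lemma and Girsanov's theorem (as in (\ref{manygir})) with Lemma \ref{lem5.7} yields
\[
E_{v,x}^{K_\eps}\!\left[R_t^{K_\eps}(r,s)\right] = \int_r^s e^{\rho(x-K_\eps(u)) - (\sqrt{2}\eps + \eps^2/2)(u-v)} \left(-\frac{1}{2}\partial_y p^{K_\eps}_{v,u}(x,y)\bigg|_{y = K_\eps(u)}\right) du.
\]
By differentiating the formula of Lemma \ref{lem5.3} at the moving boundary $y = K_\eps(u)$, the inner derivative is $\asymp K_\eps(v)^{-1/2} K_\eps(u)^{-3/2}$ times $-\tfrac{1}{2}\partial_{y'} \omega_{\tau^{K_\eps}(v,u)}(x/K_\eps(v), y')|_{y'=1}$, with implicit constants uniform under (\ref{limtz}) and $s \le t-\oldC{C_a}$, since the factor $e^{O(\eps K_\eps(v) + 1/K_\eps(u))}$ of Lemma \ref{density} is then $\asymp 1$ by (\ref{1/K}) and (\ref{epsK}).

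Next I would reorganize the exponential prefactor. Writing $\rho = \sqrt{2} + \eps$ and invoking Lemma \ref{tau(r,s)} together with the identity $\pi^2/(2\omega^2) = \sqrt{2}$, one has
\[
\rho(x-K_\eps(u)) - (\sqrt{2}\eps + \eps^2/2)(u-v) = \rho(x-K_\eps(v)) + \frac{\pi^2}{2}\tau^{K_\eps}(v,u) + O(1),
\]
where the $O(1)$ remainder $\eps(K_\eps(v)-K_\eps(u)) - \eps^2(u-v)/2$ is controlled using (\ref{epsK}) and (\ref{limtz}). Performing the change of variable $\tau = \tau^{K_\eps}(v,u)$ (so that $du = K_\eps(u)^2\, d\tau$), and using that $K_\eps$ is decreasing (whence $K_\eps(u)^{1/2}/K_\eps(v)^{1/2} \le 1$), one obtains
\[
E_{v,x}^{K_\eps}\!\left[R_t^{K_\eps}(r,s)\right] \lesssim e^{\rho(x-K_\eps(v))} \cdot I\!\left(\frac{x}{K_\eps(v)},\, \bigl[\tau^{K_\eps}(v,r),\, \tau^{K_\eps}(v,s)\bigr]\right).
\]
Since the Lebesgue measure of $[\tau^{K_\eps}(v,r), \tau^{K_\eps}(v,s)]$ equals $\tau^{K_\eps}(r,s)$, applying (\ref{lem7.1}) gives the claimed bound.

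The main obstacle is passing from the pointwise density estimate of Lemma \ref{density} to the derivative at the boundary, since the factor $e^{O(\cdot)}$ there bounds values but not derivatives directly. I would resolve this by differentiating the formula of Lemma \ref{lem5.3} in $y$ and noting that, on paths approaching $y = K_\eps(u)$, the change-of-measure factor $\phi^{K_\eps}_{v,u}$ is uniformly controlled by the estimate (\ref{rho_rs}); this step is entirely analogous to the corresponding boundary-derivative computations carried out in \cite{ms20} and \cite{bbs14}.
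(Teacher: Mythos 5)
Your proof is correct and follows the paper's own argument: express the expectation as an integral of the boundary-hitting rate via Lemma~\ref{lem5.7}, the many-to-one lemma and Girsanov, insert the density estimate of Lemma~\ref{density}, reorganize the exponent using Lemma~\ref{tau(r,s)} and the identity $\pi^2/(2\omega^2)=\sqrt 2$ (absorbing the $\eps(K_\eps(v)-K_\eps(u))-\eps^2(u-v)/2$ remainder as $O(1)$ via (\ref{epsK}) and (\ref{limtz})), change variables to $\tau=\tau^{K_\eps}(v,u)$ using $K_\eps(u)\le K_\eps(v)$, and close with (\ref{lem7.1}). The obstacle you flag---that a pointwise $\asymp$ bound on the density does not by itself control a derivative---is exactly the delicate point, and your resolution is the one the paper implicitly uses: the comparison in (\ref{densityBrownian}) is uniform in $y$ because the bound (\ref{rho_rs}) on $\phi^{K_\eps}_{v,u}$ is path-independent, and since both $p^{K_\eps}_{v,u}(x,\cdot)$ and the $\omega_\tau$ expression vanish at $y=K_\eps(u)$, the one-sided difference quotients at the boundary inherit the same two-sided constants, hence so do the boundary derivatives.
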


\noindent\textit{Proof.}
By the many-to-one lemma, Girsanov's theorem and Lemma \ref{lem5.7}, we have
\begin{align}\label{manygirhit}
E_{v,x}^{K_\eps}\left[R_t^{K_\eps}(r,s)\right]
&=\int_r^s e^{u-v}e^{-\rho^2(u-v)/2+\rho(x-K_\eps(u))}P_{v,x}^{K_\eps}(\tau^+<\tau^-, \tau^+\in du)\nonumber\\
&=\int_r^s e^{u-v}e^{-\rho^2(u-v)/2+\rho(x-K_\eps(u))}\left(-\frac{1}{2}\frac{\partial}{\partial y} p^{K_\eps}_{v,u}(x,y)\bigg|_{y=K_\eps(u)}\right)du.
\end{align}
Since the estimate of $p^{K_\eps}_{v,u}(x,y)$ in (\ref{densityBrownian}) holds uniformly for all $0\leq v<s<t$, and $y\in (0, K_\eps(s))$ and the derivative of $p_{v,u}^{K_\eps}(x,y)$ with respect to $y$ exists for all $u\in [r,s]$, it follows from Lemma \ref{tau(r,s)} and equations (\ref{densityBrownian}) and (\ref{manygirhit}) that
\begin{align}\label{lem15intstep}
E_{v,x}^{K_\eps}\left[R_t^{K_\eps}(r,s)\right]
&=e^{O(\eps K_\eps(v)+1/K_\eps(s))}\int_r^s\frac{1}{K_\eps(v)^{1/2}K_\eps(u)^{3/2}}e^{-\eps^2(u-v)/2+\eps(K_\eps(v)-K_\eps(u))+\rho(x-K_\eps(v))}\nonumber\\
&\hspace{0.2in}\times e^{\pi^2\tau^{K_\eps}(v,u)/2}\bigg(-\frac{1}{2}\frac{\partial}{\partial y}\omega_{\tau^{K_\eps}(v,u)}\left(\frac{x}{K_\eps(v)}, y\right)\bigg|_{y=1}\bigg)du.
\end{align}
If $t\lesssim \eps^{-3/2}$ and $0\leq v \leq r<s \leq t-\oldC{C_a}$ for some positive constant $\oldC{C_a}$, then $$\left|-\frac{\eps^2(u-v)}{2}+\eps(K_\eps(v)-K_\eps(u))+O\left(\eps K_\eps(v)+\frac{1}{K_\eps(s)}\right)\right|\lesssim 1$$ for all $u\in [r,s]$  by (\ref{1/K}) and (\ref{epsK}). Therefore,
we have for $\eps$ sufficiently small
\begin{align}\label{EvxR(r,s)int}
E_{v,x}^{K_\eps}\left[R_t^{K_\eps}(r,s)\right]
&\asymp e^{\rho(x-K_\eps(v))}\int_r^s\frac{1}{K_\eps(v)^{1/2}K_\eps(u)^{3/2}} e^{\pi^2\tau^{K_\eps}(v,u)/2}\nonumber\\
&\qquad \qquad\times\bigg(-\frac{1}{2}\frac{\partial}{\partial y}\omega_{\tau^{K_\eps}(v,u)}\left(\frac{x}{K_\eps(v)}, y\right)\bigg|_{y=1}\bigg)du.
\end{align}
Note that $K_\eps(v)\geq K_\eps(u)$. After the change of variable $z=\tau^{K_\eps}(v,u)$, by equation (\ref{lem7.1}), we obtain
\begin{align}\label{lem15proof}
E_{v,x}^{K_\eps}\left[R_t^{K_\eps}(r,s)\right]
&\lesssim e^{\rho(x-K_\eps(v))}\int_r^s\frac{1}{K_\eps(u)^{2}} e^{\pi^2\tau^{K_\eps}(v,u)/2}\bigg(-\frac{1}{2}\frac{\partial}{\partial y}\omega_{\tau^{K_\eps}(v,u)}\left(\frac{x}{K_\eps(v)}, y\right)\bigg|_{y=1}\bigg)du\nonumber\\
&=e^{\rho(x-K_\eps(v))}\int_{\tau^{K_\eps}(v,r)}^{\tau^{K_\eps}(v,s)}e^{\pi^2z/2}\bigg(-\frac{1}{2}\frac{\partial}{\partial y}\omega_{z}\left(\frac{x}{K_\eps(v)}, y\right)\bigg|_{y=1}\bigg)dv\nonumber\\
&\lesssim e^{\rho(x-K_\eps(v))}\bigg(\tau^{K_\eps}(r,s)\sin\bigg(\frac{\pi x}{K_\eps(v)}\bigg)+\frac{x}{K_\eps(v)}\bigg)
\end{align}
and the lemma follows.
\qedwhite
\\

Fix $0<\alpha<\beta<1$ and $\oldC{C_a}>0$. Choose $\oldC{C_b}$ large enough such that
\begin{equation}\label{constb}
J_{\oldC{C_b}}<\frac{1}{2}.
\end{equation}
Let $A_{\alpha,\beta}$ be a positive constant such that
\begin{equation}\label{Aalphabeta}
A_{\alpha,\beta}\geq\max\left\{\frac{\oldC{C_a}}{1-\beta}, \left(\frac{2c^2\oldC{C_b}}{\alpha}\right)^3, \left(\frac{2 \omega^2 \max\{\oldC{C_b}, \oldC{C_mai}\}}{c((1-\alpha)^{1/3}-(1-\beta)^{1/3})}\right)^3\right\}.
\end{equation}
We claim that for $t$ satisfying (\ref{limtz}) and for $t \geq A_{\alpha,\beta}$, if $\eps$ is sufficiently small, then
\begin{equation}\label{AalphabetaCa}
\alpha t\leq \beta t\leq t-\oldC{C_a},
\end{equation}
\begin{equation}\label{AalphabetaCb}
\oldC{C_b}K_\eps(0)^2\leq \alpha t,
\end{equation}
\begin{equation}\label{AalphabetaE}
\tau^{K_\eps}(\alpha t,\beta t) \geq \max\{\oldC{C_b}, \oldC{C_mai}\}\geq 2\oldC{C_mai} J_{\tau^{K_{\eps}}(0,\alpha t)}.
\end{equation}
Indeed, equation (\ref{AalphabetaCa}) is straightforward. When $t\gg 1/\eps$, equation (\ref{AalphabetaCb}) is obvious. When $t\lesssim 1/\eps$, because $A_{\alpha,\beta}\geq (2c^2\oldC{C_b})^3/\alpha$, equation (\ref{AalphabetaCb}) follows from (\ref{F-1asymp3/5}). To prove (\ref{AalphabetaE}), we first note that by (\ref{AalphabetaCb}),
\[
\tau^{K_{\eps}}(0,\alpha t)\geq \frac{\alpha t}{K_\eps(0)^2}\geq \oldC{C_b}.
\]
It follows from (\ref{constb})  that $2\oldC{C_mai}J_{\tau^{K_{\eps}}(0,\alpha t)}\leq \oldC{C_mai}$. When $t\asymp \eps^{-3/2}$, by Lemma \ref{tau(r,s)}, we have
\[
\frac{1}{\omega^2}K_\eps(\alpha t)\geq \tau^{K_\eps}(\alpha t,\beta t) \geq \frac{\beta t-\alpha t}{K_\eps(\alpha t)^2}=\frac{\beta-\alpha}{F^{-1}(\eps^{3/2}(1-\alpha)t)^2}\eps t.
\]
Thus
\begin{equation}\label{tauasymp2/3}
\tau^{K_\eps}(\alpha t,\beta t)\asymp \eps^{-1/2}
\end{equation} and (\ref{AalphabetaE}) holds trivially. When $t\ll \eps^{-3/2}$, according to equation (\ref{F-1asymp3/5}), Lemma \ref{tau(r,s)} and the fact that $t^{1/3}\gg \eps t$, we have
\begin{equation}\label{taull2/3}
\tau^{K_\eps}(\alpha t,\beta t)=\frac{c}{\omega^2}\big((1-\alpha)^{1/3}-(1-\beta)^{1/3}\big)t^{1/3}+O(\eps t)\geq \frac{c}{2\omega^2}\big((1-\alpha)^{1/3}-(1-\beta)^{1/3}\big)t^{1/3}
\end{equation}
and equation (\ref{AalphabetaE}) follows.
\\

\begin{Lemma}\label{lem15lb}
Suppose (\ref{limtz}) holds and $t\geq A_{\alpha,\beta}$. Then for $x\in (0,K_\eps(0))$,
\[
E_{x}^{K_{\eps}}\left[R_{t}^{K_\eps}(\alpha t,\beta t)\right]\asymp \tau^{K_\eps}(\alpha t,\beta t)\sin\left(\frac{\pi x}{K_{\eps}(0)}\right)e^{\rho(x-K_\eps(0))}.
\]
\end{Lemma}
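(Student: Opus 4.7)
The plan is to revisit the integral representation for the first moment used in the proof of Lemma \ref{lem15}, this time extracting a matching lower bound and a sharper upper bound by applying (\ref{lem7.1}) directly. Specializing (\ref{EvxR(r,s)int}) to $v=0$, $r=\alpha t$, $s=\beta t$ — valid by (\ref{AalphabetaCa}) — and changing variables via $z=\tau^{K_\eps}(0,u)$, so that $dz=du/K_\eps(u)^2$, the integral becomes
\begin{equation*}
E_x^{K_\eps}[R_t^{K_\eps}(\alpha t,\beta t)] \asymp e^{\rho(x-K_\eps(0))}\int_{\tau^{K_\eps}(0,\alpha t)}^{\tau^{K_\eps}(0,\beta t)} \frac{K_\eps(u(z))^{1/2}}{K_\eps(0)^{1/2}}e^{\pi^2 z/2}\left(-\frac{1}{2}\frac{\partial}{\partial y}\omega_z\left(\frac{x}{K_\eps(0)},y\right)\bigg|_{y=1}\right)dz.
\end{equation*}

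My first step would be to verify that $K_\eps(u)/K_\eps(0)$ is bounded above and below by positive constants for $u\in[\alpha t,\beta t]$, so that the prefactor $K_\eps(u(z))^{1/2}/K_\eps(0)^{1/2}$ can be absorbed into the $\asymp$. Since $K_\eps$ is decreasing and $K_\eps(u)=\eps^{-1/2}F^{-1}(\eps^{3/2}(t-u))$, this reduces to bounding the ratio $F^{-1}(\eps^{3/2}t)/F^{-1}(\eps^{3/2}(1-\beta)t)$ by a constant depending only on $\beta$, which follows from the asymptotics $F^{-1}(u)\asymp u^{1/3}$ for small $u$ via (\ref{F-1asymp3/5}) and $F^{-1}(u)\asymp u$ for large $u$ via (\ref{F-1epsasymp}). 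One is then left with
\begin{equation*}
E_x^{K_\eps}[R_t^{K_\eps}(\alpha t,\beta t)] \asymp e^{\rho(x-K_\eps(0))}\,I\!\left(\frac{x}{K_\eps(0)},S\right),
\end{equation*}
where $S=[\tau^{K_\eps}(0,\alpha t),\tau^{K_\eps}(0,\beta t)]$, with $\lambda(S)=\tau^{K_\eps}(\alpha t,\beta t)$ and $\inf S=\tau^{K_\eps}(0,\alpha t)$.

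Next I would apply (\ref{lem7.1}) to this $I$. Since $\lambda(S)\geq\oldC{C_b}\geq 1$ by (\ref{AalphabetaE}), the factor $\min\{1,\lambda(S)\}$ equals $1$, and the error term, bounded using the $J_{\inf S}\sin(\pi x)$ branch of the minimum, satisfies
\begin{equation*}
\left|I\!\left(\frac{x}{K_\eps(0)},S\right) - \pi\lambda(S)\sin\!\left(\frac{\pi x}{K_\eps(0)}\right)\right| \leq \oldC{C_mai}J_{\tau^{K_\eps}(0,\alpha t)}\sin\!\left(\frac{\pi x}{K_\eps(0)}\right) \leq \frac{\lambda(S)}{2}\sin\!\left(\frac{\pi x}{K_\eps(0)}\right),
\end{equation*}
where the last inequality is the second inequality in (\ref{AalphabetaE}). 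Comparing with the main term $\pi\lambda(S)\sin(\pi x/K_\eps(0))$ gives $I(x/K_\eps(0),S)\asymp\tau^{K_\eps}(\alpha t,\beta t)\sin(\pi x/K_\eps(0))$, which is precisely the claim of the lemma.

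The main obstacle is that one cannot conclude by simply invoking Lemma \ref{lem15}: its upper bound carries an additional additive term $x/K_\eps(0)$ that is not dominated by $\tau^{K_\eps}(\alpha t,\beta t)\sin(\pi x/K_\eps(0))$ when $x$ approaches $K_\eps(0)$ and $\sin(\pi x/K_\eps(0))$ becomes small. The point is that when the set $S$ lies far from the origin, the error bound in (\ref{lem7.1}) can be taken proportional to $\sin(\pi x/K_\eps(0))$ rather than $x/K_\eps(0)$, and the hypothesis (\ref{AalphabetaE}) — built into the choice of $A_{\alpha,\beta}$ — is precisely what forces this proportional error to be strictly dominated by the main term.
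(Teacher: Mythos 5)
Your proposal is correct and follows essentially the same route as the paper: both start from (\ref{EvxR(r,s)int}) with $v=0$, $r=\alpha t$, $s=\beta t$, absorb the $K_\eps(0)^{1/2}K_\eps(u)^{3/2}$ prefactor into the $\asymp$ by noting $K_\eps$ is comparable on $[0,\beta t]$, change variables to $z=\tau^{K_\eps}(0,u)$, and then apply (\ref{lem7.1}) via its $J_{\inf S}\sin(\pi x)$ branch, using (\ref{AalphabetaE}) to absorb the error into the main term. The only minor slip is your claim that $\lambda(S)\geq\oldC{C_b}\geq 1$ — (\ref{AalphabetaE}) does not assert $\oldC{C_b}\geq 1$ — but this is immaterial, since $\min\{1,\lambda(S)\}\leq 1$ already gives the error bound you use, and the lower bound proceeds unchanged.
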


\noindent\textit{Proof.}
We apply equation (\ref{EvxR(r,s)int}) with $v=0, r=\alpha t$ and $s=\beta t$. Since $K_\eps(\alpha t)\asymp K_\eps(u)\asymp K_\eps(\beta t)$ for all $u\in [\alpha t,\beta t]$, we get
\[
E_{x}^{K_\eps}\left[R_t^{K_\eps}(\alpha t,\beta t)\right]
\asymp e^{\rho(x-K_\eps(0))}\int_{\alpha t}^{\beta t}\frac{1}{K_\eps(u)^{2}} e^{\pi^2\tau^{K_\eps}(0,u)/2}\bigg(-\frac{1}{2}\frac{\partial}{\partial y}\omega_{\tau^{K_\eps}(0,u)}\left(\frac{x}{K_\eps(0)}, y\right)\bigg|_{y=1}\bigg)du.
\]
After the change of variable $z=\tau^{K_\eps}(0,u)$, by equation (\ref{lem7.1}), we obtain
\begin{equation}\label{lem15lb1}
E_{x}^{K_\eps}\left[R_t^{K_\eps}(\alpha t,\beta t)\right]
\lesssim e^{\rho(x-K_\eps(0))}\bigg(\pi\tau^{K_\eps}(\alpha t,\beta t)\sin\bigg(\frac{\pi x}{K_\eps(0)}\bigg)+\oldC{C_mai}J_{\tau^{K_\eps}(0,\alpha t)}\sin \left(\frac{\pi x}{K_\eps(0)}\right)\bigg)
\end{equation}
and
\begin{equation}\label{lem15lb2}
E_{x}^{K_\eps}\left[R_t^{K_\eps}(\alpha t,\beta t)\right]
\gtrsim e^{\rho(x-K_\eps(0))}\bigg(\pi\tau^{K_\eps}(\alpha t,\beta t)\sin\bigg(\frac{\pi x}{K_\eps(0)}\bigg)-\oldC{C_mai}J_{\tau^{K_\eps}(0,\alpha t)}\sin \left(\frac{\pi x}{K_\eps(0)}\right)\bigg).
\end{equation}
The lemma follows from (\ref{AalphabetaE}), (\ref{lem15lb1}) and (\ref{lem15lb2}).
\qedwhite
\\

\begin{Lemma}\label{lem16}
Suppose (\ref{limtz}) holds and $t\geq A_{\alpha,\beta}$. Then for $x\in (0,K_\eps(0)-1)$,
\[
E_{x}^{K_{\eps}}\left[R^{K_\eps}_{t}(\alpha t, \beta t)^2\right]\lesssim K_\eps(0)\sin\left(\frac{\pi x}{K_\eps(0)}\right)e^{\rho(x-K_\eps(0))}.
\]
\end{Lemma}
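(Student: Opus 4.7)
The plan is to apply the many-to-two formula for binary branching Brownian motion at rate one: any two distinct particles in $\mathcal{N}$ share a most recent common ancestor that branched at some time $r \in [0,\beta t]$, and conditionally on that ancestor's position $z$ at time $r$, the two subtrees evolve independently. Counting ordered pairs, this gives
\begin{equation*}
E_x^{K_\eps}\bigl[R_t^{K_\eps}(\alpha t,\beta t)^2\bigr] = E_x^{K_\eps}\bigl[R_t^{K_\eps}(\alpha t,\beta t)\bigr] + 2\int_0^{\beta t}\!dr \int_0^{K_\eps(r)} q_{0,r}^{K_\eps}(x,z)\,\bigl(E_{r,z}^{K_\eps}[R_t^{K_\eps}(\alpha t,\beta t)]\bigr)^2\,dz.
\end{equation*}
For the diagonal term, Lemma~\ref{lem15lb} gives $E_x^{K_\eps}[R_t^{K_\eps}(\alpha t,\beta t)] \asymp \tau^{K_\eps}(\alpha t,\beta t)\sin(\pi x/K_\eps(0))e^{\rho(x-K_\eps(0))}$, and Lemma~\ref{tau(r,s)} combined with $K_\eps(\alpha t)\le K_\eps(0)$ yields $\tau^{K_\eps}(\alpha t,\beta t)\le \omega^{-2}K_\eps(0)$, so the diagonal contribution already fits within the desired bound.

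For the off-diagonal integral, I would use Lemma~\ref{lem15} (applied with starting time $r$, interval $[\alpha t\vee r,\beta t]$) to get
\begin{equation*}
E_{r,z}^{K_\eps}[R_t^{K_\eps}(\alpha t,\beta t)] \lesssim e^{\rho(z-K_\eps(r))}\Bigl(\tau^{K_\eps}(\alpha t\vee r,\beta t)\sin\tfrac{\pi z}{K_\eps(r)} + \tfrac{z}{K_\eps(r)}\Bigr),
\end{equation*}
then square it, apply $(a+b)^2\le 2(a^2+b^2)$, and insert the density estimates from Lemmas~\ref{density} and~\ref{densityv}. After pulling out the factor $\sin(\pi x/K_\eps(0))\,e^{\rho(x-K_\eps(0))}$ from the density, the $z$-integrations reduce to estimates of $\int_0^{K_\eps(r)} e^{\rho z}\sin^a(\pi z/K_\eps(r))\,dz$ for $a\in\{1,2,3\}$; a standard integration-by-parts shows each such integral is of order $K_\eps(r)^{-a}e^{\rho K_\eps(r)}$, where the exponential factor precisely cancels against $e^{-2\rho K_\eps(r)+\sqrt{2}K_\eps(r)}=e^{-\sqrt{2}K_\eps(r)-2\eps K_\eps(r)}$ produced by the squared first moment and the density's prefactor $e^{-\sqrt{2}(K_\eps(0)-K_\eps(r))}$.

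I would split the $r$-integral into three regimes. In the regime $r\in[\oldC{C_b}K_\eps(0)^2,\alpha t]$, Lemma~\ref{densityv} provides the clean $\sin(\pi x/K_\eps(0))\sin(\pi z/K_\eps(r))$ factorization, and after the $z$-integration the remaining $r$-integral scales as $\int \tau^{K_\eps}(\alpha t,\beta t)^2 K_\eps(r)^{-3/2} K_\eps(0)^{-1/2} dr$, which is $\lesssim K_\eps(0)$. In the regime $r\in[\alpha t,\beta t]$, the $\tau$-factor shrinks to $\tau^{K_\eps}(r,\beta t)$, and combined with the integrability of $K_\eps(r)^{-2}$ near $\beta t$ produces again an $O(K_\eps(0))$-contribution. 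The main obstacle will be the small-$r$ regime $r\in[0,\oldC{C_b}K_\eps(0)^2]$, where one cannot replace $\omega_{\tau^{K_\eps}(0,r)}(x/K_\eps(0),\cdot)$ by its first eigenmode; here I would use estimate \eqref{lem5.2} of Lemma~\ref{lem5.1/2} (after the change of variable $u=\tau^{K_\eps}(0,r)$) to produce both a $\sin(\pi x/K_\eps(0))$ term and a $(1-x/K_\eps(0))$ term, using the hypothesis $x<K_\eps(0)-1$ to control the latter by a constant times $\sin(\pi x/K_\eps(0))$. Summing the three regimes yields the required bound $\lesssim K_\eps(0)\sin(\pi x/K_\eps(0))e^{\rho(x-K_\eps(0))}$.
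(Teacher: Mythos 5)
Your overall skeleton is the same as the paper's: the many-to-two decomposition, the use of Lemma~\ref{lem15} (squaring and expanding the two terms), and a partition of the $r$-integral with Lemma~\ref{densityv} doing the work once $r\ge\oldC{C_b}K_\eps(0)^2$. The middle and late regimes are handled correctly in spirit (the paper simply keeps the cruder uniform bound $\tau^{K_\eps}(s,\beta t)\le K_\eps(s)/\omega^2$ and runs $s$ all the way to $\beta t$, so it uses two regimes in $s$ rather than your three, but this is cosmetic).

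The gap is exactly where you flag the ``main obstacle'': the small-$r$ regime. Your plan to apply only estimate \eqref{lem5.2} cannot work, because \eqref{lem5.2} discards the weight $\sin^a(\pi z/K_\eps(r))\,e^{\rho z}$ entirely. After the substitution $z\mapsto K_\eps(r)-z$, that weight decays like $z^a e^{-\rho z}$; dropping the $\sin^a$ factor and only keeping $e^{\rho z}\le e^{\rho K_\eps(r)}$ before invoking \eqref{lem5.2} overshoots the true size of the inner integral by a polynomial power of $K_\eps(r)\asymp K_\eps(0)$, and the lemma's conclusion is then lost by that same power. In the paper's companion estimate for the $H_\eps$ curve (Lemma~\ref{lemRH2nd}) the small-time piece is handled precisely by splitting on $z$ as well: \eqref{lem5.1small} is used on $z\le K_\eps(\cdot)/2$, where it produces the crucial extra $z/K_\eps(\cdot)$ factor that combines with the polynomial weight, and \eqref{lem5.2} is applied only on $z>K_\eps(\cdot)/2$ where $e^{-\rho z}$ is already exponentially small. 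The paper's proof of Lemma~\ref{lem16} takes a different but equally indispensable route: it replaces $q^{K_\eps}_{0,s}$ by the density $q^*_s$ with the constant boundary $K_\eps(0)$, integrates out $s$ via the explicit Green's-function bound $\int_0^\infty q^*_s(x,y)\,ds\le 2e^{\rho(x-y)}x(K_\eps(0)-y)/K_\eps(0)$, and it is the $(K_\eps(0)-y)$ factor, together with the $\sin^2$ weight, that yields the correct order. Either ingredient would close the gap; \eqref{lem5.2} alone does not.

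A smaller but genuine error: your claim that $1-x/K_\eps(0)\lesssim\sin(\pi x/K_\eps(0))$ under the hypothesis $x<K_\eps(0)-1$ is false for small $x$ (the left side is $\approx 1$ while the right side is $\approx\pi x/K_\eps(0)$). What is true, and what the paper uses, is the opposite-endpoint inequality $x/K_\eps(0)\lesssim K_\eps(0)\sin(\pi x/K_\eps(0))$, which is exactly why $x<K_\eps(0)-1$ matters. If you set up the $y$-integral with the reflection $z=K_\eps(s)-y$, the argument of $\omega$ becomes $1-x/K_\eps(0)$ and \eqref{lem5.2} produces the term $x/K_\eps(0)$ rather than $1-x/K_\eps(0)$; carrying this out consistently would have steered you toward the correct endpoint and toward the additional need for \eqref{lem5.1small} or a Green's-function bound.
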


\noindent\textit{Proof.}
We use a standard second moment estimate.  It is noted, for example, in the proof of Lemma 16 in~\cite{bbs14} that we can write $R_t^{K_\eps}(u_1,u_2)^2=R_t^{K_\eps}(u_1,u_2)+2Y$, where $Y$ is the number of distinct pairs of particles that hit $H_\eps(\cdot)$ during time $(u_1,u_2)$. If a branching event happens at time $s$ and location $y$, then on average there will be $E_{s,y}^{K_\eps}[R_t^{K_\eps}(u_1 \vee s,u_2)]^2$ number of pairs of particles that hit $K_\eps(\cdot)$ during time $(u_1,u_2)$ and have their most recent common ancestor at time $s$.
Therefore, using Lemma \ref{lem15} and the fact that $\tau^{K_\eps}(s,\beta t)\leq K_\eps(s)/\omega^2$ for all $s\in [0,\beta t]$ by Lemma \ref{tau(r,s)}, we have
\begin{align}\label{R2I1234}
E_{x}^{K_{\eps}}\left[R_t^{K_\eps}(\alpha t, \beta t)^{2}\right]
&\leq E_{x}^{K_{\eps}}\left[R_t^{K_\eps}(\alpha t, \beta t)\right]+2\int_{0}^{\beta t}\int_{0}^{K_{\eps}(s)}q_{0,s}^{K_\eps}(x,y)\left(E_{s,y}^{K_{\eps}}\left[R_t^{K_\eps}(s, \beta t)\right]\right)^2dyds\nonumber\\
&\lesssim E_{x}^{K_{\eps}}\left[R_t^{K_\eps}(\alpha t, \beta t)\right]\nonumber\\
&\hspace{0.2in}+\int_{0}^{\beta t}\int_{0}^{K_{\eps}(s)}q_{0,s}^{K_\eps}(x,y)K_\eps(s)^2\sin^2\left(\frac{\pi y}{K_\eps(s)}\right)e^{2\rho(y-K_\eps(s))}dyds\nonumber\\
&\hspace{0.2in}+\int_{0}^{\beta t}\int_{0}^{K_{\eps}(s)}q_{0,s}^{K_\eps}(x,y)\frac{y^2}{K_\eps(s)^2}e^{2\rho(y-K_\eps(s))}dyds.
\end{align}
By Lemmas \ref{tau(r,s)} and \ref{lem15lb},
\begin{equation}\label{firstmomterm}
E_{x}^{K_{\eps}}\left[R_t^{K_\eps}(\alpha t, \beta t)\right] \lesssim K_\eps(0)\sin\left(\frac{\pi x}{K_\eps(0)}\right)e^{\rho(x-K_\eps(0))}.
\end{equation}
We divide the first double integral in (\ref{R2I1234}) into two pieces, denoted $I_1$ and $I_2$, depending on whether $0\leq s\leq \oldC{C_b}K_\eps(0)^2$ or $\oldC{C_b}K_\eps(0)^2< s\leq \beta t$. We do the same thing to the second double integral and get $I_3$ and $I_4$. We are going to bound $I_1$, $I_2$, $I_3$ and $I_4$ separately.

We start with $I_1$. By equations (\ref{L'}) and (\ref{mvt}), we have for $0\leq s\leq \oldC{C_b}K_{\eps}(0)^2$,
\begin{equation}\label{L(s)-L(r)}
K_\eps(0)-K_\eps(s)\leq \eps\oldC{C_b}K_\eps(0)^2\left(1+\frac{\omega^2}{F^{-1}(\eps^{3/2}(t-\oldC{C_b}K_\eps(0)^2))^2}\right).
\end{equation}
Note that $\eps \oldC{C_b} K_{\eps}(0)^2 = C_8 F^{-1}(\eps^{3/2} t)^2$.
Therefore, if $t\asymp \eps^{-3/2}$, then from (\ref{L(s)-L(r)}), we have
\begin{equation}\label{L0s}
0\leq K_{\eps}(0)-K_{\eps}(s)\lesssim 1.
\end{equation}
If $t\ll\eps^{-3/2}$, then by (\ref{AalphabetaCb}), we have $F^{-1}(\eps^{3/2}(t-\oldC{C_b}K_\eps(0)^2)) \geq F^{-1}(\eps^{3/2}(1 - \alpha)t)$, and then (\ref{F-1asymp3/5}) and (\ref{L(s)-L(r)}) imply that (\ref{L0s}) still holds. We thus have for all $0\leq s\leq \oldC{C_b}K_{\eps}(0)^2$ and $y\in (0,K_{\eps}(s))$,
\[
\sin\Big(\frac{\pi y}{K_{\eps}(s)}\Big)\lesssim \sin\Big(\frac{\pi y}{K_{\eps}(0)}\Big).
\]
Then by Fubini's theorem, we have
\begin{align}\label{lem15I1}
I_1
&\lesssim K_{\eps}(0)^2\int_{0}^{K_{\eps}(0)}\sin^2 \Big(\frac{\pi y}{K_{\eps}(0)}\Big)e^{2\rho (y-K_{\eps}(0))}\int_{0}^{\oldC{C_b}K_{\eps}(0)^2}q^{K_{\eps}}_{0,s}(x,y)dsdy.
\end{align}
We adapt Lemma 4 in \cite{bbs14} to obtain an upper bound for $\int_{0}^{\oldC{C_b}K_{\eps}(0)^2}q^{K_{\eps}}_{0,s}(x,y)ds$. Consider the process where particles move as Brownian motion with drift $-\rho$ and are killed upon hitting constant boundaries $0$ or $K_\eps(0)$. For $x\in (0,K_{\eps}(0))$, we denote by $q^*_s(x,y)$ the density of the process at time $s$ which starts from a single particle at $x$ at time $0$. Correspondingly, we denote by $v_s(x,y)$ the density of Brownian motion (without drift) in the strip $[0,K_\eps(0)]$ started from a single particle at $x$. By the many-to-one lemma, Girsanov's theorem and equation (51) in \cite{bbs13}, we have for all $s\geq 0$ and $x,y\in (0,K_{\eps}(0))$,
\begin{equation}\label{green}
\int_{0}^{\infty}q^*_s(x,y)ds=\int_0^\infty e^{\rho(x-y)-(\sqrt{2}\eps+\eps^2/2)s}v_s(x,y)ds\leq \frac{2e^{\rho(x-y)}x(K_{\eps}(0)-y)}{K_{\eps}(0)}.
\end{equation}
Note that $q_{0,s}(x,y)$ is bounded above by $q_s^*(x,y)$ and for $0<x<K_{\eps}(0)-1$,
\begin{equation}\label{sinx}
\frac{x}{K_{\eps}(0)}\lesssim K_\eps(0)\sin \Big(\frac{\pi x}{K_{\eps}(0)}\Big).
\end{equation}
It follows from (\ref{lem15I1})--(\ref{sinx}) that for $0<x<K_{\eps}(0)-1$ and $\eps$ sufficiently small,
\begin{align}\label{I1}
I_1
&\lesssim xK_{\eps}(0)^2e^{\rho(x-2K_{\eps}(0))}\int_{0}^{K_\eps(0)}e^{\rho y}\sin^2\Big(\frac{\pi y}{K_{\eps}(0)}\Big)\frac{K_\eps(0)-y}{K_\eps(0)}dy\nonumber\\
&\asymp \frac{x}{K_{\eps}(0)}e^{\rho (x-K_{\eps}(0))}\nonumber\\
&\lesssim K_\eps(0)\sin\left(\frac{\pi x}{K_\eps(0)}\right)e^{\rho(x-K_\eps(0))}.
\end{align}
Similarly for $I_3$, by equations (\ref{L0s}), (\ref{green}), (\ref{sinx}) and Fubini's theorem, we get
\begin{align}\label{I3}
I_3
&\lesssim \frac{1}{K_{\eps}(0)^2}e^{-2\rho K_{\eps}(0)}\int_{0}^{K_{\eps}(0)}y^2e^{2\rho y}\int_{0}^{\oldC{C_b}K_{\eps}(0)^2}q_s^*(x,y)dsdy\nonumber\\
&\lesssim  \frac{x}{K_{\eps}(0)^3}e^{\rho (x-2K_{\eps}(0))}\int_{0}^{K_{\eps}(0)}y^2e^{\rho y}(K_{\eps}(0)-y)dy\nonumber\\
&\asymp \frac{x}{K_{\eps}(0)}e^{\rho (x-K_{\eps}(0))}\nonumber\\
&\lesssim K_\eps(0)\sin\left(\frac{\pi x}{K_\eps(0)}\right)e^{\rho(x-K_\eps(0))}.
\end{align}

For $I_2$, by Lemma \ref{densityv}, we have
\begin{align}\label{I2int}
I_2
&\lesssim \int_{\oldC{C_b}K_\eps(0)^2}^{\beta t}\int_{0}^{K_{\eps}(s)}\frac{1}{K_{\eps}(0)^{1/2}K_{\eps}(s)^{1/2}}e^{\rho (x-y)-\sqrt{2}(K_\eps(0)-K_\eps(s))}\sin\Big(\frac{\pi x}{K_{\eps}(0)}\Big)\sin\Big(\frac{\pi y}{K_{\eps}(s)}\Big)\nonumber\\
&\hspace{0.2in}\times K_{\eps}(s)^2\sin^2\Big(\frac{\pi y}{K_{\eps}(s)}\Big)e^{2\rho (y-K_{\eps}(s))}dy ds\nonumber\\
&\asymp \frac{1}{K_{\eps}(0)^{1/2}}e^{\rho (x-K_{\eps}(0))}\sin \Big(\frac{\pi x}{K_{\eps}(0)}\Big)\int_{\oldC{C_b}K_{\eps}(0)^2}^{\beta t}K_{\eps}(s)^{3/2}e^{-\rho K_{\eps}(s)}\int_{0}^{K_{\eps}(s)}e^{\rho y}\sin^3\Big(\frac{\pi y}{K_{\eps}(s)}\Big)dyds\nonumber\\
&\asymp  \frac{1}{K_{\eps}(0)^{1/2}}e^{\rho (x-K_{\eps}(0))}\sin \Big(\frac{\pi x}{K_{\eps}(0)}\Big)\int_{\oldC{C_b}K_{\eps}(0)^2}^{\beta t}\frac{1}{K_{\eps}(s)^{3/2}}ds.
\end{align}
After the change of variable $z=F^{-1}(\eps^{3/2}(t-s))$, since $t\lesssim\eps^{-3/2}$, we see that
\begin{align}\label{L3/2int}
\int_{\oldC{C_b}K_\eps(0)^2}^{\beta t}\frac{1}{K_{\eps}(s)^{3/2}}ds
&=\eps^{-3/4}\int_{F^{-1}(\eps^{3/2}(1-\beta)t)}^{F^{-1}(\eps^{3/2}(t-\oldC{C_b}K_\eps(0)^2))}\frac{z^{1/2}}{\omega^2+z^{2}}dz\nonumber\\
&\asymp \eps^{-3/4}\int_{F^{-1}(\eps^{3/2}(1-\beta)t)}^{F^{-1}(\eps^{3/2}(t-\oldC{C_b}K_\eps(0)^2))}z^{1/2}dz\nonumber\\
&\asymp K_{\eps}(\oldC{C_b}K_\eps(0)^2)^{3/2}-K_{\eps}\left(\beta t\right)^{3/2}.
\end{align}
By (\ref{I2int}) and (\ref{L3/2int}), we have
\begin{equation}\label{I2}
I_2\lesssim \frac{K_\eps(\oldC{C_b}K_\eps(0)^2)^{3/2}}{K_{\eps}(0)^{1/2}}e^{\rho (x-K_{\eps}(0))}\sin \Big(\frac{\pi x}{K_{\eps}(0)}\Big) \leq K_\eps(0)\sin\left(\frac{\pi x}{K_\eps(0)}\right)e^{\rho(x-K_\eps(0))}.
\end{equation}
Similarly, for $I_4$, by Lemma \ref{densityv} and equation (\ref{L3/2int}), we have
\begin{align}\label{I4}
I_4
&\lesssim \int_{\oldC{C_b}K_{\eps}(0)^2}^{\beta t}\int_{0}^{K_{\eps}(s)}\frac{1}{K_{\eps}(0)^{1/2}K_{\eps}(s)^{1/2}}e^{\rho (x-y)-\sqrt{2}(K_\eps(0)-K_\eps(s))}\sin\Big(\frac{\pi x}{K_\eps(0)}\Big)\sin\Big(\frac{\pi y}{K_{\eps}(s)}\Big)\nonumber\\
&\hspace{0.2in}\times \frac{y^2e^{2\rho(y-K_{\eps}(s))}}{K_{\eps}(s)^2}dyds\nonumber\\
&\asymp \frac{1}{K_{\eps}(0)^{1/2}}e^{\rho (x-K_\eps(0))}\sin \Big(\frac{\pi x}{K_{\eps}(0)}\Big)\int_{\oldC{C_b}K_{\eps}(0)^2}^{\beta t}\frac{1}{K_{\eps}(s)^{5/2}}e^{-\rho K_{\eps}(s)}\int_{0}^{K_{\eps}(s)}y^2e^{\rho y}\sin\Big(\frac{\pi y}{K_{\eps}(s)}\Big)dyds\nonumber\\
&\asymp \frac{1}{K_{\eps}(0)^{1/2}}e^{\rho (x-K_{\eps}(0))}\sin \Big(\frac{\pi x}{K_{\eps}(0)}\Big)\int_{\oldC{C_b}K_{\eps}(0)^2}^{\beta t}\frac{1}{K_{\eps}(s)^{3/2}}ds\nonumber\\
&\lesssim K_\eps(0)\sin\left(\frac{\pi x}{K_\eps(0)}\right)e^{\rho(x-K_\eps(0))}.
\end{align}
Finally, Lemma \ref{lem16} follows from equations (\ref{R2I1234}), (\ref{firstmomterm}), (\ref{I1}), (\ref{I3}), (\ref{I2}) and (\ref{I4}).
\qedwhite
\\

Using the first and second moment estimates in Lemmas \ref{lem15lb} and \ref{lem16}, we can control the probability that particles hit the curve $K_\eps(\cdot)$ during the time interval $[\alpha t, \beta t]$.
\begin{Cor}\label{cor17}
Suppose (\ref{limtz}) holds and $t\geq A_{\alpha,\beta}$. There exist positive constants $\newC\label{C_18}$ and $\newC\label{C_19}$ such that for $x\in (0,K_\eps(0)-1)$ and $\eps$ sufficiently small,
\[
\oldC{C_18}K_\eps(0)\sin\left(\frac{\pi x}{K_\eps(0)}\right)e^{\rho(x-K_\eps(0))}\leq P_x^{K_{\eps}}\left(R_t^{K_\eps}(\alpha t, \beta t)>0\right)\leq \oldC{C_19}K_\eps(0)\sin\left(\frac{\pi x}{K_\eps(0)}\right)e^{\rho(x-K_\eps(0))}.
\]
\end{Cor}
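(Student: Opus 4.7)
The proof would be a standard Markov / Paley–Zygmund argument built on the first and second moment estimates already proved in Lemmas \ref{lem15lb} and \ref{lem16}. The key preliminary observation is that the two scales $\tau^{K_\eps}(\alpha t,\beta t)$ and $K_\eps(0)$ are comparable throughout the range $A_{\alpha,\beta}\leq t\leq \oldC{C_shorttime}\eps^{-3/2}$. Indeed, by Lemma \ref{tau(r,s)} one has $\tau^{K_\eps}(\alpha t,\beta t)\leq K_\eps(\alpha t)/\omega^2\leq K_\eps(0)/\omega^2$. For the reverse direction, one uses the computations already recorded in \eqref{tauasymp2/3} and \eqref{taull2/3}: when $t\asymp\eps^{-3/2}$ both quantities are of order $\eps^{-1/2}$, and when $t\ll \eps^{-3/2}$ both are of order $t^{1/3}$ by \eqref{F-1asymp3/5}. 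Thus $\tau^{K_\eps}(\alpha t,\beta t)\asymp K_\eps(0)$ with constants depending only on $\alpha,\beta$.

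For the upper bound, I would simply apply Markov's inequality combined with Lemma \ref{lem15lb}:
\[
P_x^{K_\eps}\bigl(R_t^{K_\eps}(\alpha t,\beta t)>0\bigr)\leq E_x^{K_\eps}\bigl[R_t^{K_\eps}(\alpha t,\beta t)\bigr]\lesssim \tau^{K_\eps}(\alpha t,\beta t)\sin\!\left(\frac{\pi x}{K_\eps(0)}\right)e^{\rho(x-K_\eps(0))},
\]
and then use $\tau^{K_\eps}(\alpha t,\beta t)\lesssim K_\eps(0)$ from the previous paragraph to obtain the claimed form with constant $\oldC{C_19}$.

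For the lower bound, I would invoke the Paley–Zygmund inequality, namely $P(X>0)\geq E[X]^2/E[X^2]$ applied to $X=R_t^{K_\eps}(\alpha t,\beta t)$. Combining the lower bound of Lemma \ref{lem15lb} for the numerator with the upper bound of Lemma \ref{lem16} for the denominator gives
\[
P_x^{K_\eps}\bigl(R_t^{K_\eps}(\alpha t,\beta t)>0\bigr)\gtrsim \frac{\tau^{K_\eps}(\alpha t,\beta t)^2\sin^2\!\left(\frac{\pi x}{K_\eps(0)}\right)e^{2\rho(x-K_\eps(0))}}{K_\eps(0)\sin\!\left(\frac{\pi x}{K_\eps(0)}\right)e^{\rho(x-K_\eps(0))}}=\frac{\tau^{K_\eps}(\alpha t,\beta t)^2}{K_\eps(0)}\sin\!\left(\frac{\pi x}{K_\eps(0)}\right)e^{\rho(x-K_\eps(0))}.
\]
Finally applying $\tau^{K_\eps}(\alpha t,\beta t)\gtrsim K_\eps(0)$ yields the desired lower bound with constant $\oldC{C_18}$.

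There is no real obstacle here: all of the heavy lifting has already been done in Lemmas \ref{lem15lb} and \ref{lem16}. The only thing that deserves a moment of care is the verification that $\tau^{K_\eps}(\alpha t,\beta t)\asymp K_\eps(0)$ uniformly over the whole range $t\in[A_{\alpha,\beta},\oldC{C_shorttime}\eps^{-3/2}]$, which—as noted—is already contained in the two cases treated in \eqref{tauasymp2/3} and \eqref{taull2/3} together with the definition of $A_{\alpha,\beta}$.
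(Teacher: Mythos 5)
Your argument is correct and follows exactly the same route as the paper: the upper bound is Markov's inequality plus the first-moment estimate of Lemma \ref{lem15lb}, the lower bound is the second-moment method (your ``Paley--Zygmund'' is the same inequality the paper attributes to Cauchy--Schwarz) combining Lemmas \ref{lem15lb} and \ref{lem16}, and in both directions one needs $\tau^{K_\eps}(\alpha t,\beta t)\asymp K_\eps(0)$, which you obtain from Lemma \ref{tau(r,s)} and \eqref{F-1asymp3/5}, \eqref{tauasymp2/3}, \eqref{taull2/3} just as the paper does.
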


\noindent\textit{Proof.}
We first observe that $\tau^{K_\eps}(\alpha t, \beta t)\lesssim K_\eps(0)$ by Lemma \ref{tau(r,s)}.
According to Lemma \ref{lem15lb}, we have
\begin{equation}\label{cor17ub}
P_x^{K_{\eps}}\left(R_t^{K_\eps}(\alpha t,\beta t)>0\right)\leq E_{x}^{K_{\eps}}\left[R_t^{K_\eps}(\alpha t,\beta t)\right]\lesssim K_\eps(0)\sin\left(\frac{\pi x}{K_\eps(0)}\right)e^{\rho(x-K_\eps(0))}.
\end{equation}
On the other hand, we have $\tau^{K_\eps}(\alpha t, \beta t)\gtrsim K_\eps(0)$ by equations (\ref{F-1asymp3/5}), (\ref{tauasymp2/3}) and (\ref{taull2/3}). By Lemmas \ref{lem15lb} and \ref{lem16} and the Cauchy-Schwarz inequality, we have
\begin{equation}\label{cor17lb}
P_x^{K_{\eps}}\left(R_t^{K_\eps}(\alpha t,\beta t)>0\right)\geq\frac{E_{x}^{K_{\eps}}[R_t^{K_\eps}(\alpha t,\beta t)]^2}{E_{x}^{K_{\eps}}[R^{K_\eps}_t(\alpha t, \beta t)^2]}\gtrsim K_\eps(0)\sin\left(\frac{\pi x}{K_\eps(0)}\right)e^{\rho(x-K_\eps(0))}.
\end{equation}
Corollary \ref{cor17} follows from (\ref{cor17ub}) and (\ref{cor17lb}).
\qedwhite

\subsection{Moment estimates of $R_t^{H_\eps}(r,s)$ and proof of Proposition \ref{prop-2}}
\begin{Lemma}\label{lemRH1st}
Suppose (\ref{tles}) holds. If $0\leq v\leq r<s \leq t$, then for $x\in (0, H_\eps(v))$, we have
\begin{align*}
E_{v,x}^{H_{\eps}}\left[R_{t}^{H_\eps}(r,s)\right]
&\lesssim e^{\rho(x-H_\eps(v))}\frac{H_\eps(v)}{H_\eps(s)}\bigg(\tau^{H_\eps}(r,s)\sin\left(\frac{\pi x}{H_{\eps}(v)}\right)\\
&\hspace{0.2in}+\min\left\{\frac{x}{H_\eps(v)}, J_{\tau^{H_\eps}(v,r)}\sin\left(\frac{\pi x}{H_\eps(v)}\right)\min\left\{1, \tau^{H_\eps}(r,s)\right\}\right\}\bigg).
\end{align*}
\end{Lemma}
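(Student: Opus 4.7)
The proof will follow closely the structure of Lemma \ref{lem15}, but with the extra logarithmic correction in $\tau^{H_\eps}$ (as compared to $\tau^{K_\eps}$) showing up as the additional ratio $H_\eps(v)/H_\eps(s)$ in the final bound. First I would combine the many-to-one lemma, Girsanov's theorem, and Lemma \ref{lem5.7} to obtain
\[
E_{v,x}^{H_\eps}\!\left[R_t^{H_\eps}(r,s)\right] = \int_r^s e^{u-v}\,e^{-\rho^2(u-v)/2+\rho(x-H_\eps(u))}\left(-\frac{1}{2}\frac{\partial}{\partial y}p^{H_\eps}_{v,u}(x,y)\bigg|_{y=H_\eps(u)}\right)du.
\]
Then I would plug in the density estimate (\ref{densityBrownianH}), differentiate the resulting expression in $y$ (the error term $O(\eps H_\eps(v) + 1/H_\eps(u) + \eps|\log(\eps^{1/2}H_\eps(v))|)$ does not depend on $y$), and observe that under (\ref{tles}), all of $\eps H_\eps(v)$, $1/H_\eps(u)$, $\eps|\log(\eps^{1/2}H_\eps(v))|$, and $\eps^2(u-v)/2$ are $O(1)$ by (\ref{L_eps<2epst}), (\ref{epsK}), and (\ref{newFinv9pi}). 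These considerations will yield
\[
E_{v,x}^{H_\eps}\!\left[R_t^{H_\eps}(r,s)\right] \lesssim e^{\rho(x-H_\eps(v))}\int_r^s \frac{1}{H_\eps(v)^{1/2}H_\eps(u)^{3/2}}\,e^{\rho(H_\eps(v)-H_\eps(u))-\sqrt{2}\eps(u-v)}\left(-\frac{1}{2}\frac{\partial}{\partial y}\omega_{\tau^{H_\eps}(v,u)}\!\left(\tfrac{x}{H_\eps(v)},y\right)\!\bigg|_{y=1}\right)du.
\]

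The key step, and the main technical obstacle, is the identity obtained from Lemma \ref{newtau(r,s)} by multiplying through by $\pi^2/2$ and using $\pi^2/(2\omega^2)=\sqrt{2}$:
\[
e^{\pi^2 \tau^{H_\eps}(v,u)/2} = e^{\sqrt{2}(H_\eps(v)-H_\eps(u))-\sqrt{2}\eps(u-v)}\left(\frac{H_\eps(u)}{H_\eps(v)}\right)^{3/2}.
\]
Inserting this into the previous display (and absorbing the bounded factor $e^{\eps(H_\eps(v)-H_\eps(u))}$ into the constant) reshapes the integrand as
\[
\frac{H_\eps(v)}{H_\eps(u)^{3}}\,e^{\pi^2\tau^{H_\eps}(v,u)/2}\left(-\frac{1}{2}\frac{\partial}{\partial y}\omega_{\tau^{H_\eps}(v,u)}\!\left(\tfrac{x}{H_\eps(v)},y\right)\!\bigg|_{y=1}\right).
\]
This is exactly the structure needed to change variables to $z=\tau^{H_\eps}(v,u)$, since $dz=du/H_\eps(u)^2$ transforms the prefactor into $H_\eps(v)/H_\eps(u)\,dz$. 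Because $H_\eps$ is decreasing on $[0,t]$, we have $H_\eps(u)\geq H_\eps(s)$ for $u\in[r,s]$, and therefore $H_\eps(v)/H_\eps(u)\leq H_\eps(v)/H_\eps(s)$. Pulling this ratio out of the integral yields
\[
E_{v,x}^{H_\eps}\!\left[R_t^{H_\eps}(r,s)\right]\lesssim e^{\rho(x-H_\eps(v))}\frac{H_\eps(v)}{H_\eps(s)}\,I\!\left(\frac{x}{H_\eps(v)},\,[\tau^{H_\eps}(v,r),\tau^{H_\eps}(v,s)]\right).
\]

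Finally, applying the estimate (\ref{lem7.1}) with $S=[\tau^{H_\eps}(v,r),\tau^{H_\eps}(v,s)]$ (so that $\lambda(S)=\tau^{H_\eps}(r,s)$ and $\inf S=\tau^{H_\eps}(v,r)$) produces the claimed bound with the two terms
\[
\tau^{H_\eps}(r,s)\sin\!\left(\frac{\pi x}{H_\eps(v)}\right)\quad\text{and}\quad\min\!\left\{\frac{x}{H_\eps(v)},\,J_{\tau^{H_\eps}(v,r)}\sin\!\left(\frac{\pi x}{H_\eps(v)}\right)\min\{1,\tau^{H_\eps}(r,s)\}\right\}.
\]
The main subtlety is tracking how the logarithmic correction in Lemma \ref{newtau(r,s)}, which is absent in Lemma \ref{tau(r,s)}, produces exactly the $3/2$-power of the ratio $H_\eps(v)/H_\eps(u)$ that survives the change of variables as the linear factor $H_\eps(v)/H_\eps(s)$; everything else is a straightforward adaptation of Lemma \ref{lem15}.
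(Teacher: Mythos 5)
Your proof is correct and takes essentially the same route as the paper's: combine the many-to-one lemma, Girsanov, Lemma \ref{lem5.7} and the density estimate \eqref{densityBrownianH}, rewrite the exponent via Lemma \ref{newtau(r,s)} to produce the $H_\eps(v)/H_\eps(u)^3$ prefactor, change variables to $z=\tau^{H_\eps}(v,u)$, bound $H_\eps(v)/H_\eps(u)\leq H_\eps(v)/H_\eps(s)$, and invoke \eqref{lem7.1}. The paper states the intermediate identity implicitly (``by doing the same calculations as \eqref{manygirhit}--\eqref{EvxR(r,s)int}''); you have simply made explicit the algebra $e^{\pi^2\tau^{H_\eps}(v,u)/2}=e^{\sqrt{2}(H_\eps(v)-H_\eps(u))-\sqrt{2}\eps(u-v)}(H_\eps(u)/H_\eps(v))^{3/2}$ that underlies it.
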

\noindent\textit{Proof.} The proof is essentially the same as the proof of Lemma \ref{lem15}. By doing the same calculations as (\ref{manygirhit})-(\ref{EvxR(r,s)int}), applying (\ref{densityBrownianH}) instead of (\ref{densityBrownian}), and Lemma \ref{newtau(r,s)} instead of Lemma \ref{tau(r,s)}, we get
\begin{align}\label{EvxR(r,s)intH}
E_{v,x}^{H_\eps}\left[R_t^{H_\eps}(r,s)\right]
&\asymp e^{\rho(x-H_\eps(v))}\int_r^s\frac{H_\eps(v)}{H_\eps(u)^{3}} e^{\pi^2\tau^{H_\eps}(v,u)/2}\times\bigg(-\frac{1}{2}\frac{\partial}{\partial y}\omega_{\tau^{H_\eps}(v,u)}\left(\frac{x}{H_\eps(v)}, y\right)\bigg|_{y=1}\bigg)du.
\end{align}
Note that $H_\eps(u)\geq H_\eps(s)$ for all $r\leq u\leq s$. After the change of variable $z=\tau^{H_\eps}(v,u)$, we get
\begin{equation}\label{lemRH1stsub}
E_{v,x}^{H_\eps}\left[R_t^{H_\eps}(r,s)\right]
\lesssim e^{\rho(x-H_\eps(v))}\frac{H_\eps(v)}{H_\eps(s)}\int_{\tau^{H_\eps}(v,r)}^{\tau^{H_\eps}(v,s)}e^{\pi^2z/2}\bigg(-\frac{1}{2}\frac{\partial}{\partial y}\omega_{z}\left(\frac{x}{H_\eps(v)}, y\right)\bigg|_{y=1}\bigg)dz.
\end{equation}
The lemma follows from equations (\ref{lem7.1}) and (\ref{lemRH1stsub}).
\qedwhite
\\

Suppose (\ref{limtzinf}) holds. Fix $\oldC{C_shorttime}/2\leq\mu<\lambda\leq\oldC{C_shorttime}-1$. Let
\[
u_1=t-\lambda\eps^{-3/2}\qquad \mbox{and}\qquad u_2=t-\mu\eps^{-3/2}.
\]
In the next two lemmas, we are going to estimate the first and second moments of the number of particles hitting $H_\eps(\cdot)$ during the time interval $(u_1, u_2)$ when the initial particle starts out at the position $x$ close to $H_{\eps}(0)$.  Note that when we write $x = H_{\eps}(0) - O(1)$, this means the position $x$ of the initial particle depends on $\eps$.

\begin{Lemma}\label{lemRH1stlb}
Suppose (\ref{limtzinf}) holds. Then for $x= H_\eps(0)-O(1)$,
\[
E_{x}^{H_{\eps}}\left[R_{t}^{H_\eps}(u_1,u_2)\right]\asymp 1.
\]
\end{Lemma}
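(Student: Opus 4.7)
\textbf{Proof plan for Lemma \ref{lemRH1stlb}.}

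The plan is to start from the exact expression obtained in the derivation of Lemma \ref{lemRH1st}, namely
\[
E_x^{H_\eps}\bigl[R_t^{H_\eps}(u_1,u_2)\bigr] \asymp e^{\rho(x-H_\eps(0))} H_\eps(0) \int_{u_1}^{u_2}\frac{1}{H_\eps(u)^3} e^{\pi^2 \tau^{H_\eps}(0,u)/2}\left(-\tfrac{1}{2}\tfrac{\partial}{\partial y}\omega_{\tau^{H_\eps}(0,u)}\!\left(\tfrac{x}{H_\eps(0)},y\right)\Big|_{y=1}\right) du,
\]
which follows from equation \eqref{EvxR(r,s)intH}. The main tool for evaluating the integral is the identity \eqref{lem7.1} controlling $I(\cdot,S)$. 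I will change variables $z=\tau^{H_\eps}(0,u)$, so $du=H_\eps(u)^2\,dz$, to rewrite the integral as
\[
H_\eps(0) \int_{z_1}^{z_2}\frac{1}{H_\eps(u(z))}e^{\pi^2z/2}\left(-\tfrac{1}{2}\tfrac{\partial}{\partial y}\omega_z\!\left(\tfrac{x}{H_\eps(0)},y\right)\Big|_{y=1}\right)dz,
\]
with $z_1=\tau^{H_\eps}(0,u_1)$ and $z_2=\tau^{H_\eps}(0,u_2)$.

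Next I would gather the order of magnitudes needed. For $u\in[u_1,u_2]$ we have $t-u\in[\mu\eps^{-3/2},\lambda\eps^{-3/2}]$, so $H_\eps(u)=\eps^{-1/2}F_\eps^{-1}(\eps^{3/2}(t-u))$ is comparable to $\eps^{-1/2}$ uniformly in $u$, by continuity of $F^{-1}$ and the convergence $F_\eps^{-1}\to F^{-1}$ for small $\eps$. Hence $\tau^{H_\eps}(u_1,u_2)=\int_{u_1}^{u_2}H_\eps(u)^{-2}du\asymp \eps\cdot\eps^{-3/2}=\eps^{-1/2}$, giving $z_2-z_1\asymp\eps^{-1/2}$, and the factor $1/H_\eps(u(z))\asymp\eps^{1/2}$ can be pulled out of the $z$-integral. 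The integral then becomes $\asymp \eps^{1/2}\,I(x/H_\eps(0),[z_1,z_2])$.

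To apply \eqref{lem7.1} I also need a lower bound on $z_1$; this is where the calculation is most delicate. Using Lemma \ref{newtau(r,s)},
\[
z_1 = \frac{\eps^{-1/2}}{\omega^2}\bigl(F_\eps^{-1}(\eps^{3/2}t)-F_\eps^{-1}(\lambda)-(\eps^{3/2}t-\lambda)\bigr) - \frac{3}{\pi^2}\log\!\bigl(F_\eps^{-1}(\eps^{3/2}t)/F_\eps^{-1}(\lambda)\bigr),
\]
and since $u\mapsto F^{-1}(u)-u$ is strictly increasing and positive (by \eqref{Fcompare}) with a finite positive limit $\omega\pi/2$ as $u\to\infty$, the bracket stays uniformly bounded below by a positive constant on both regimes $t\asymp\eps^{-3/2}$ and $\eps^{-3/2}\ll t\lesssim\eps^{-2}$. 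Thus $z_1\gtrsim\eps^{-1/2}$, which makes $J_{z_1}$ exponentially small in $\eps^{-1/2}$.

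Finally I assemble the pieces. Since $x=H_\eps(0)-O(1)$, one has $e^{\rho(x-H_\eps(0))}\asymp 1$ and $\sin(\pi x/H_\eps(0))\asymp 1/H_\eps(0)$. By \eqref{lem7.1} with $S=[z_1,z_2]$, the main term $\pi(z_2-z_1)\sin(\pi x/H_\eps(0))\asymp\eps^{-1/2}/H_\eps(0)$ dominates the error $O(J_{z_1}\sin(\pi x/H_\eps(0)))$ by a factor $\asymp\eps^{-1/2}/J_{z_1}$, so $I(x/H_\eps(0),[z_1,z_2])\asymp\eps^{-1/2}/H_\eps(0)$. Combining everything,
\[
E_x^{H_\eps}[R_t^{H_\eps}(u_1,u_2)] \asymp 1\cdot H_\eps(0)\cdot\eps^{1/2}\cdot\frac{\eps^{-1/2}}{H_\eps(0)}=1,
\]
as required. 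The principal obstacle is the uniform control of $z_1$ across the full range $t\in[\oldC{C_shorttime}\eps^{-3/2},\oldC{C_mainub}\eps^{-2}]$, since $H_\eps(0)$ ranges from $\asymp\eps^{-1/2}$ to $\asymp\eps^{-1}$; the identity in Lemma \ref{newtau(r,s)} together with the monotonicity of $F^{-1}(u)-u$ handles both regimes simultaneously.
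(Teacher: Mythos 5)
Your proposal is correct and follows essentially the same path as the paper's proof of Lemma~\ref{lemRH1stlb}: start from \eqref{EvxR(r,s)intH}, change variables to $z=\tau^{H_\eps}(0,u)$, apply \eqref{lem7.1}, and combine the estimates $H_\eps(u)\asymp\eps^{-1/2}$ on $[u_1,u_2]$, $\tau^{H_\eps}(u_1,u_2)\asymp\eps^{-1/2}$, and $\tau^{H_\eps}(0,u_1)\gtrsim\eps^{-1/2}$. The only minor difference is that the paper obtains $\tau^{H_\eps}(0,u_1)\gtrsim\eps^{-1/2}$ more directly, by restricting the defining integral to $[t-(\lambda+1)\eps^{-3/2},u_1]$ where $H_\eps\lesssim\eps^{-1/2}$, rather than analysing the explicit formula from Lemma~\ref{newtau(r,s)} and the monotonicity of $u\mapsto F_\eps^{-1}(u)-u$ as you do.
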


\noindent\textit{Proof.}
We first apply Lemma \ref{lemRH1st} with $v=0$, $r=u_1$, $s=u_2$ and $x=H_\eps(0)-O(1)$,  and get
\begin{equation}\label{8013}
E_{x}^{H_\eps}\left[R_t^{H_\eps}(u_1,u_2)\right]\lesssim \frac{H_\eps(0)}{H_\eps(u_2)}\sin\left(\frac{\pi x}{H_\eps(0)}\right)\left(\tau^{H_\eps}(u_1,u_2)+J_{\tau^{H_\eps}(0,u_1)}\right).
\end{equation}
By Lemma \ref{newtau(r,s)} and equation (\ref{LL*1}), we have
\begin{equation}\label{8023}
\tau^{H_\eps}(u_1,u_2)\lesssim H_\eps(u_1) = \eps^{-1/2}F^{-1}(\lambda)+O(1)\asymp \eps^{-1/2}\asymp H_\eps(u_2).
\end{equation}
Also, by equation (\ref{L>L*}), we observe that for $\eps$ sufficiently small,
\begin{equation}\label{tau(0,u1)>}
\tau^{H_\eps}(0,u_1) \geq \int_{t-(\lambda+1) \eps^{-3/2}}^{t-\lambda\eps^{-3/2}}\frac{1}{H_\eps(u)^2}du \geq \int_{\lambda \eps^{-3/2}}^{(\lambda+1)\eps^{-3/2}} \frac{1}{(\eps^{-1/2}F^{-1}(\eps^{3/2}u)+O(1))^2} du\asymp \eps^{-1/2}.
\end{equation}
It follows from (\ref{8013}), (\ref{8023}), and (\ref{tau(0,u1)>}) that
\[
E_{x}^{H_{\eps}}\left[R_{t}^{H_\eps}(u_1,u_2)\right]\lesssim 1.
\]

For the lower bound, we apply equation (\ref{EvxR(r,s)intH}). Note that $H_\eps(u)\leq H_\eps(u_1)$ for $u_1\leq u\leq u_2$. After doing the change of variable $z=\tau^{H_\eps}(v,u)$ and applying (\ref{lem7.1}), we get for $x=H_\eps(0)-O(1)$,
\begin{align}\label{RH1stlb1}
E_{x}^{H_\eps}\left[R_t^{H_\eps}(u_1,u_2)\right]
&\gtrsim e^{\rho(x-H_\eps(0))}\frac{H_\eps(0)}{H_\eps(u_1)}\int_{\tau^{H_\eps}(0,u_1)}^{\tau^{H_\eps}(0,u_2)} e^{\pi^2 z/2}\left(-\frac{1}{2}\frac{\partial}{\partial y}\omega_z \left(\frac{x}{K_\eps(0)}, y\right)\bigg|_{y=1}\right) dz \nonumber\\
&\gtrsim \frac{H_\eps(0)}{H_\eps(u_1)}\bigg(\pi\tau^{H_\eps}(u_1,u_2)\sin\bigg(\frac{\pi x}{H_\eps(0)}\bigg)-\oldC{C_mai}J_{\tau^{H_\eps}(0,u_1)}\sin \left(\frac{\pi x}{H_\eps(0)}\right)\bigg).
\end{align}
By doing the same calculations as (\ref{tau(0,u1)>}), we have
\begin{equation}\label{tau(u1,u2)>}
\tau^{H_\eps}(u_1, u_2) \gtrsim \eps^{-1/2}\asymp H_\eps(u_1).
\end{equation}
By equations (\ref{tau(0,u1)>})-(\ref{tau(u1,u2)>}), we have for $x=H_\eps(0)-O(1)$
\[
E_{x}^{H_\eps}\left[R_t^{H_\eps}(u_1,u_2)\right]\gtrsim \frac{H_\eps(0)}{H_\eps(u_1)}\tau^{H_\eps}(u_1,u_2)\sin\bigg(\frac{\pi x}{H_\eps(0)}\bigg)\asymp 1.
\]
\qedwhite

\begin{Lemma}\label{lemRH2nd}
Suppose (\ref{limtzinf}) holds. Then for $x= H_\eps(0)-O(1)$,
\[
E_{x}^{H_{\eps}}\left[R_{t}^{H_\eps}(u_1,u_2)^2\right]\lesssim 1.
\]
\end{Lemma}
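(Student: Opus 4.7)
\noindent\emph{Proof plan for Lemma \ref{lemRH2nd}.}
The approach mirrors the second moment argument used for Lemma \ref{lem16}. Write $R_{t}^{H_\eps}(u_1,u_2)^2 = R_{t}^{H_\eps}(u_1,u_2) + 2Y$, where $Y$ is the number of unordered pairs of distinct particles hitting $H_\eps(\cdot)$ in $(u_1,u_2)$. Every such pair has a unique most recent common ancestor (MRCA); conditioning on the time $s$ and location $y$ of the MRCA and using the branching property gives
\begin{equation*}
E_{x}^{H_\eps}\bigl[R_{t}^{H_\eps}(u_1,u_2)^2\bigr]
\;\leq\; E_{x}^{H_\eps}\bigl[R_{t}^{H_\eps}(u_1,u_2)\bigr]
+ 2\int_0^{u_2}\!\!\int_0^{H_\eps(s)} q_{0,s}^{H_\eps}(x,y)\,\bigl(E_{s,y}^{H_\eps}[R_t^{H_\eps}(u_1\vee s,u_2)]\bigr)^2\,dy\,ds.
\end{equation*}
The first term is $O(1)$ by Lemma \ref{lemRH1stlb}. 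The whole point is to show that the double integral is also $O(1)$, using the fact that $H_\eps(s)\asymp H_\eps(u_2)\asymp \eps^{-1/2}$ on the relevant time window, so the geometric prefactors conspire with the exponential terms in just the same way as in the first moment bound.

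To estimate $E_{s,y}^{H_\eps}[R_t^{H_\eps}(u_1\vee s,u_2)]$, I would use Lemma \ref{lemRH1st}, obtaining a bound of the shape
\begin{equation*}
E_{s,y}^{H_\eps}[R_t^{H_\eps}(u_1\vee s,u_2)]\;\lesssim\; e^{\rho(y-H_\eps(s))}\,\frac{H_\eps(s)}{H_\eps(u_2)}\,\Bigl(\tau^{H_\eps}(u_1\vee s,u_2)\sin\bigl(\tfrac{\pi y}{H_\eps(s)}\bigr)+\min\bigl\{\tfrac{y}{H_\eps(s)},\ J_{\tau^{H_\eps}(s,u_1\vee s)}\sin\bigl(\tfrac{\pi y}{H_\eps(s)}\bigr)\bigr\}\Bigr).
\end{equation*}
Squaring produces the two square-of-sums pieces, giving terms that look like $y^2 e^{2\rho(y-H_\eps(s))}/H_\eps(s)^2$ and $\sin^2(\pi y/H_\eps(s))e^{2\rho(y-H_\eps(s))}$, exactly as in the integrands $I_1$--$I_4$ in the proof of Lemma \ref{lem16}. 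I would then split the $s$-integral into three regions: (i) $0\leq s\leq \oldC{C_b}H_\eps(0)^2$, where $H_\eps(s)\asymp H_\eps(0)$ so I use Lemma \ref{densityH} for $q_{0,s}^{H_\eps}$ and the Green-function bound (\ref{green}) after first integrating in $s$; (ii) $\oldC{C_b}H_\eps(0)^2<s\leq u_1$, where the stretched density from Lemma \ref{densityvH} applies; and (iii) $u_1<s\leq u_2$, where $u_1\vee s=s$, so $\tau^{H_\eps}(s,u_2)\lesssim H_\eps(s)\lesssim \eps^{-1/2}$ and the conditional-expectation bound is smaller.

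In region (i), the computations parallel $I_1$ and $I_3$ in Lemma \ref{lem16}, with the extra factor $H_\eps(0)/H_\eps(u_2)\asymp 1$ from (\ref{8023}), and since $x=H_\eps(0)-O(1)$ gives $\sin(\pi x/H_\eps(0))\asymp 1/H_\eps(0)$, each piece simplifies to $O(1)$ after using (\ref{green}). In region (ii), one follows the treatment of $I_2$ and $I_4$: $\tau^{H_\eps}(u_1,u_2)\lesssim H_\eps(u_1)\asymp \eps^{-1/2}$ contributes a factor $\eps^{-1}$ in the square, which is absorbed by the integral $\int H_\eps(s)^{-3/2}\,ds\asymp \eps^{1/2}$ analogous to (\ref{L3/2int}), combined with the $\eps^{1/2}$ from $\sin(\pi x/H_\eps(0))$ and the $\eps^{1/2}$ coming from $H_\eps(s)/H_\eps(u_2)\cdot 1/H_\eps(s)^{1/2}H_\eps(0)^{1/2}$. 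In region (iii), the factor $J_{\tau^{H_\eps}(s,s)}=\infty$ is irrelevant since $u_1\vee s=s$ makes the conditional expectation bound reduce to $\tau^{H_\eps}(s,u_2)\sin(\pi y/H_\eps(s))H_\eps(s)/H_\eps(u_2)$, and the shrinking of $\tau^{H_\eps}(s,u_2)$ as $s\to u_2$ makes the contribution bounded by a convergent integral.

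The main obstacle is bookkeeping: tracking the precisely right balance between the $\eps^{-1/2}$ scale of $H_\eps(\cdot)$, the $\eps^{1/2}$ scale of $\sin(\pi x/H_\eps(0))$, and the size of $\tau^{H_\eps}(\cdot,\cdot)$ across the three time regions so that every term collapses to $O(1)$ rather than a power of $\eps$. The exponential factors $e^{\rho(x-H_\eps(0))}$ and $e^{\rho(y-H_\eps(s))}$ must be paired correctly with the $e^{\rho y}$ inside the Green function estimate (\ref{green}) to avoid spurious blowup near $y=H_\eps(s)$; this is exactly where the $\sin(\pi y/H_\eps(s))$ or $(H_\eps(s)-y)/H_\eps(s)$ factor from the density is essential for controlling the integral.
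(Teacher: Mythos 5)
The overall decomposition ($R^2 \leq R + 2Y$, condition on the MRCA) is the same, but your plan to carry out the small-$s$ region by imitating $I_1,I_3$ of Lemma~\ref{lem16} --- i.e.\ the analogue of \eqref{L0s} followed by the Green-function estimate~\eqref{green} --- has a genuine gap. The estimate \eqref{L0s}, namely $K_\eps(0)-K_\eps(s)\lesssim 1$ for $s\leq \oldC{C_b}K_\eps(0)^2$, is where Lemma~\ref{lem16} uses the standing assumption $t\lesssim\eps^{-3/2}$ (equivalently $\eps K_\eps(0)^2\lesssim 1$). Under~\eqref{limtzinf} this fails badly: for $t\asymp\eps^{-2}$ one has $H_\eps(0)\asymp\eps t\asymp\eps^{-1}$, hence for $s\asymp H_\eps(0)^2\asymp\eps^{-2}$ one gets $H_\eps(0)-H_\eps(s)\asymp\eps s\asymp\eps^{-1}$, which is unbounded. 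Consequently the factor $e^{2\rho(y-H_\eps(s))}$ appearing in $\bigl(E_{s,y}^{H_\eps}[\cdot]\bigr)^2$ cannot be replaced by $e^{2\rho(y-H_\eps(0))}$ at a constant cost; the conversion picks up $e^{2\rho(H_\eps(0)-H_\eps(s))}=e^{\Theta(\eps^{-1})}$, and likewise $\sin(\pi y/H_\eps(s))\lesssim\sin(\pi y/H_\eps(0))$ no longer holds near the moving boundary. The Green-function route is simply unavailable here, and ``bookkeeping'' will not save it.

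The paper's actual proof sidesteps this. It first splits at $u_3=u_1-\oldC{C_2ndm}/\eps$ only to show that the conditional first moment satisfies the clean uniform bound $\bigl(E_{s,y}^{H_\eps}[\cdot]\bigr)^2\lesssim e^{2\rho(y-H_\eps(s))}\bigl((H_\eps(s)-y)^2+1\bigr)$ (using $\tau^{H_\eps}(s,u_1)\gtrsim 1$ when $s\leq u_3$ to tame the $J$-factor), then recombines the two pieces into a single $\int_0^{u_2}\int_0^{H_\eps(s)}$ integral. Crucially, it does \emph{not} pass to the single-mode $\sin\sin$ form of the density: it keeps the full $\omega_{\tau^{H_\eps}(0,s)}$ kernel from Lemma~\ref{densityH}, changes variables $z=H_\eps(s)-y$ and $u=\tau^{H_\eps}(0,s)$, and then applies the kernel estimates \eqref{lem5.1small}, \eqref{lem5.1s}, \eqref{lem5.2} from Lemma~\ref{lem5.1/2}. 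The split is at $u_4=\oldC{C_2ndH}H_\eps(0)^2$, where $\oldC{C_2ndH}$ is chosen \emph{small} (an upper-bound constraint, see~\eqref{C_2ndHcond}) precisely so that $\tau^{H_\eps}(0,u_4)\leq 1$, making the range of $u$ compatible with~\eqref{lem5.1small}. Note this is a genuinely different requirement than your $\oldC{C_b}$ from~\eqref{constb}, which was chosen \emph{large} enough that $J_{\oldC{C_b}}<1/2$; the two roles should not be conflated. Your intuition that the $\sin(\pi y/H_\eps(s))$ or $(H_\eps(s)-y)$ factor near the boundary is essential is correct, but the correct vehicle is Lemma~\ref{lem5.1/2} acting on the $\omega_u$ kernel, not the constant-strip Green function.
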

\noindent\textit{Proof.} The proof is similar to the proof of Lemma 5.10 in \cite{ms20}.  We use standard second moment estimates as in \eqref{R2I1234}.
Fix a positive constant $\newC\label{C_2ndm}$. Letting $u_3=u_1-\oldC{C_2ndm}/\eps$, we get
\begin{align}\label{RHI012}
E_{x}^{H_{\eps}}\left[R_t^{H_\eps}(u_1, u_2)^{2}\right]
&\leq E_{x}^{H_{\eps}}\left[R_t^{H_\eps}(u_1, u_2)\right]+2\int_{0}^{u_3}\int_{0}^{H_{\eps}(s)}q_{0,s}^{H_\eps}(x,y)\left(E_{s,y}^{H_{\eps}}\left[R_t^{H_\eps}(u_1, u_2)\right]\right)^2dyds\nonumber\\
&\hspace{0.2in}+2\int_{u_3}^{u_2}\int_{0}^{H_{\eps}(s)}q_{0,s}^{H_\eps}(x,y)\left(E_{s,y}^{H_{\eps}}\left[R_t^{H_\eps}(s, u_2)\right]\right)^2dyds.
\end{align}
Next, we obtain upper and lower bounds for $(E_{s,y}^{H_\eps}[R_t^{H_\eps}(u_1,u_2)])^2$ when $0\leq s\leq u_3$, and $(E_{s,y}^{H_\eps}[R_t^{H_\eps}(s,u_2)])^2$ when $u_3<s\leq u_2$. When $0\leq s\leq u_3$, by Lemma \ref{lemRH1st}, we have
\[
\left(E_{s,y}^{H_\eps}\left[R_t^{H_\eps}(u_1,u_2)\right]\right)^2
\lesssim e^{2\rho(y-H_\eps(s))}\frac{H_\eps(s)^2}{H_\eps(u_2)^2}\sin^2\left(\frac{\pi y}{H_{\eps}(s)}\right)\tau^{H_\eps}(u_1,u_2)^2\left(1+J_{\tau^{H_\eps}(s,u_1)}\right)^2.
\]
We note that for $0\leq s\leq u_3$ and $\eps$ sufficiently small,
\[
\tau^{H_\eps}(s,u_1)\geq \int_{u_3}^{u_1}\frac{1}{H_\eps(v)^2}dv\geq \frac{\oldC{C_2ndm}}{F^{-1}_\eps(\lambda+\eps^{1/2}\oldC{C_2ndm})^2}\asymp 1,
\]
which implies that $J_{\tau^{H_\eps}(s,u_1)}\lesssim 1$.  Therefore, using (\ref{8023}), we have for all $0\leq s\leq u_3$,
\begin{align}\label{RH2ndE1}
\left(E_{s,y}^{H_\eps}\left[R_t^{H_\eps}(u_1,u_2)\right]\right)^2
&\lesssim e^{2\rho(y-H_\eps(s))}\frac{\tau^{H_\eps}(u_1,u_2)^2}{H_\eps(u_2)^2}H_\eps(s)^2\sin^2\left(\frac{\pi (H_\eps(s)-y)}{H_\eps(s)}\right)\nonumber\\
&\lesssim e^{2\rho(y-H_\eps(s))}(H_\eps(s)-y)^2.
\end{align}
When $u_3<s\leq u_2$, by Lemma \ref{lemRH1st}, we have
\[
\left(E_{s,y}^{H_\eps}\left[R_t^{H_\eps}(s,u_2)\right]\right)^2 \lesssim e^{2\rho(y-H_\eps(s))}H_\eps(s)^2\sin^2\left(\frac{\pi y}{H_\eps(s)}\right)\left(\frac{\tau^{H_\eps}(s,u_2)}{H_\eps(u_2)}\right)^2+e^{2\rho(y-H_\eps(s))}\frac{y^2}{H_\eps(u_2)^2}.
\]
By equation (\ref{L>L*}) and Lemma \ref{tau(r,s)}, we see that for $u_3\leq s\leq u_2$,
\[
\tau^{H_\eps}(s,u_2)\leq \tau^{K_\eps}(u_3,u_2)\lesssim H_\eps(u_3)\asymp \eps^{-1/2}\asymp H_\eps(u_2).
\]
Also, for $u_3<s\leq u_2$ and $0\leq y\leq H_\eps(s)$, we have
\[
\frac{y}{H_\eps(u_2)}\leq \frac{H_\eps(u_3)}{H_\eps(u_2)}\asymp 1.
\]
Combining the above three equations, we get
\begin{align}\label{RH2ndE2}
\left(E_{s,y}^{H_\eps}\left[R_t^{H_\eps}(s,u_2)\right]\right)^2
&\lesssim e^{2\rho(y-H_\eps(s))}\left(H_\eps(s)^2\sin^2\left(\frac{\pi (H_\eps(s)-y)}{H_\eps(s)}\right)+1\right)\nonumber\\
&\lesssim e^{2\rho(y-H_\eps(s))}\left((H_\eps(s)-y)^2+1\right).
\end{align}
By equations (\ref{RHI012})-(\ref{RH2ndE2}), we obtain
\begin{align*}
E_{x}^{H_{\eps}}\left[R_t^{H_\eps}(u_1, u_2)^{2}\right]
&\lesssim E_{x}^{H_{\eps}}\left[R_t^{H_\eps}(u_1, u_2)\right]\\
&\hspace{0.2in}+\int_0^{u_2}\int_0^{H_\eps(s)}q_{0,s}^{H_\eps}(x,y)e^{2\rho(y-H_\eps(s))}\left((H_\eps(s)-y)^2+1\right)dyds.
\end{align*}
By Lemma \ref{lemRH1stlb} and equation (\ref{densityulbsH}), we have
\begin{align*}
E_{x}^{H_{\eps}}\left[R_t^{H_\eps}(u_1, u_2)^{2}\right]
&\lesssim \int_0^{u_2}\int_{0}^{H_\eps(s)}\frac{1}{H_\eps(0)^{1/2}H_\eps(s)^{1/2}}e^{\rho(x-y)-\sqrt{2}\eps s}\omega_{\tau^{H_\eps}(0,s)}\left(\frac{x}{H_\eps(0)}, \frac{y}{H_\eps(s)}\right)\\
&\hspace{0.2in}\times e^{2\rho(y-H_\eps(s))}\left((H_\eps(s)-y)^2+1\right)dyds+O(1)\\
&=e^{\rho(x-H_\eps(0))}\int_0^{u_2}\int_0^{H_\eps(s)}\frac{1}{H_\eps(0)^{1/2}H_\eps(s)^{1/2}}e^{(\sqrt{2}+\eps)(H_\eps(0)-H_\eps(s))-\sqrt{2}\eps s}\\
&\hspace{0.2in}\times e^{\rho(y-H_\eps(s))}\omega_{\tau^{H_\eps}(0,s)}\left(\frac{x}{H_\eps(0)}, \frac{y}{H_\eps(s)}\right)\left((H_\eps(s)-y)^2+1\right)dyds+O(1).
\end{align*}
Note that $\eps(H_\eps(0)-H_\eps(s))\leq \eps H_\eps(0)\lesssim 1$ by (\ref{L_eps<2epst}), and $\omega_u(1-x,1-y)=\omega_u(x,y)$.  Using Lemma~\ref{newtau(r,s)} and equation (\ref{tau(u1,u2)>}), after the change of variable $z=H_\eps(s)-y$, we have for $x=H_\eps(0)-O(1)$,
\begin{align}\label{I1int}
&E_{x}^{H_{\eps}}\left[R_t^{H_\eps}(u_1, u_2)^{2}\right]\nonumber\\
&\lesssim \int_0^{u_2}\int_0^{H_\eps(s)}\frac{H_{\eps}(0)}{H_\eps(s)^2}e^{\pi^2 \tau^{H_\eps}(0,s)/2}\omega_{\tau^{H_\eps}(0,s)}\left(1-\frac{x}{H_\eps(0)}, \frac{z}{H_\eps(s)}\right)(z^2+1)e^{-\rho z}dzds+O(1).
\end{align}

Denote the double integral as $I$.  If $t \asymp \eps^{-3/2}$, then $H_{\eps}(t/2) \asymp H_{\eps}(0)$.  If $t \geq 2u^* \eps^{-3/2}$, then by (\ref{L_eps<2epst}), we have $H_{\eps}(0)/H_{\eps}(t/2) \leq 4$.  Therefore, we can choose a positive constant $\oldC{C_2ndH}$ such that
\begin{equation}\label{C_2ndHcond}
\oldC{C_2ndH} < \frac{1}{8 \oldC{C_mainub}}\qquad \mbox{and} \qquad \frac{\oldC{C_2ndH} H_{\eps}(0)^2}{H_{\eps}(t/2)^2} \leq 1.
\end{equation}  When $t\asymp\eps^{-3/2}$, we see that $\oldC{C_2ndH}H_\eps(0)^2\ll t/2$. When $t \geq u^* \eps^{-3/2}$, by (\ref{tles}) and (\ref{L_eps<2epst}), we have $\oldC{C_2ndH}H_\eps(0)^2\leq \oldC{C_2ndH}(2\eps t)^2<4\oldC{C_mainub}\oldC{C_2ndH}t< t/2.$ Therefore, for all $t$ satisfying (\ref{limtzinf}), we have
\begin{equation}\label{u4}
u_4:=\oldC{C_2ndH}H_\eps(0)^2<\frac{t}{2}.
\end{equation}
Write $I=I_1+I_2$, where $I_1$ is the portion of the double integral for which $0\leq s\leq u_4$ and $I_2$ is the portion of the double integral for which $u_4<s\leq u_2$. In the following, we estimate $I_1$ and $I_2$ separately.

For $I_1$, we start with the change of variable $u=\tau^{H_\eps}(0,s)$. Letting $h(u)$ be the value such that $\tau^{H_\eps}(0,h(u))=u$, we can divide $I_1$ into two pieces by writing
\begin{align}\label{I1J12}
I_1
&=H_\eps(0)\int_0^{\tau^{H_\eps}(0,u_4)}e^{\pi^2 u/2}\int_0^{H_\eps(u_4)/2}(z^2+1)e^{-\rho z}\omega_u\left(1-\frac{x}{H_\eps(0)},\frac{z}{H_\eps(h(u))}\right)dzdu\nonumber\\
&\hspace{0.2in}+H_\eps(0)\int_0^{\tau^{H_\eps}(0,u_4)}e^{\pi^2 u/2}\int_{H_\eps(u_4)/2}^{H_\eps(h(u))}(z^2+1)e^{-\rho z}\omega_u\left(1-\frac{x}{H_\eps(0)},\frac{z}{H_\eps(h(u))}\right)dzdu\nonumber\\
&=:I_{11}+I_{12}.
\end{align}
We use equation (\ref{lem5.1small}) to upper bound $I_{11}$.
Note that $h(\tau^{H_{\eps}}(0, u_4)) = u_4$, so if $0 \leq u \leq \tau^{H_{\eps}}(0, u_4)$, then $h(u) \leq u_4$.  Therefore,
when $0\leq z\leq H_\eps(u_4)/2$ and $0 \leq u \leq \tau^{H_{\eps}}(0, u_4)$, we have
\begin{equation}\label{sup1/2}
\frac{z}{H_\eps(h(u))}\leq \frac{z}{H_\eps(u_4)}\leq \frac{1}{2}.
\end{equation}
Also, by (\ref{C_2ndHcond})
\begin{equation}\label{tauHu4}
 \oldC{C_2ndH}\leq \tau^{H_\eps}(0,u_4)=\int_0^{u_4}\frac{1}{H_\eps(v)^2}dv\leq \frac{\oldC{C_2ndH}H_\eps(0)^2}{H_\eps(t/2)^2}\leq 1.
\end{equation}
Therefore, by (\ref{lem5.1small}), (\ref{sup1/2}), and (\ref{tauHu4}) along with Tonelli's theorem, we have for $x=H_\eps(0)-O(1)$,
\begin{align}\label{J1}
I_{11}
&\leq H_\eps(0)\int_0^{H_\eps(u_4)/2}(z^2+1)e^{-\rho z}\int_0^{1}e^{\pi^2u/2}\sup_{y\in [0,z/H_\eps(u_4)]}\omega_u\left(1-\frac{x}{H_\eps(0)},y\right)dudz\nonumber\\
&\lesssim H_\eps(0)\int_0^{H_\eps(u_4)/2}(z^2+1)e^{-\rho z}\frac{z}{H_\eps(u_4)}\cdot\frac{x}{H_\eps(0)}dz\nonumber\\
&\leq \frac{H_\eps(0)}{H_\eps(u_4)}\int_0^{\infty}(z^2+1)ze^{-\rho z}dz\nonumber\\
&\lesssim 1.
\end{align}
We use equation (\ref{lem5.2}) to upper bound $I_{12}$. After the change of variable $y=z/H_\eps(h(u))$, applying~(\ref{lem5.2}), we get
\begin{align*}
I_{12}
&\lesssim H_\eps(0)^4e^{-H_\eps(u_4)/2}\int_{0}^{\tau^{H_\eps}(0,u_4)}e^{\pi^2 u/2}\int_0^1\omega_u\left(1-\frac{x}{H_\eps(0)}, y\right)dydu\\
&\lesssim H_\eps(0)^4 e^{-H_\eps(u_4)/2}\left(\tau^{H_\eps}(0,u_4)\sin\left(\frac{\pi x}{H_\eps(0)}\right)+\frac{x}{H_\eps(0)}\right).
\end{align*}
Since $H_\eps(u_4)\asymp H_\eps(0)\gtrsim \eps^{-1/2}$, we have
\begin{equation}\label{J2}
I_{12}\ll 1.
\end{equation}
It follows from (\ref{I1J12}), (\ref{J1}) and (\ref{J2}) that
\begin{equation}\label{RHI1}
I_1\lesssim 1.
\end{equation}

Similarly, for $I_2$, by doing the same calculations in (\ref{I1J12})-(\ref{J2}), we have
\begin{align*}
I_2
&=H_\eps(0)\int_{\tau^{H_\eps}(0,u_4)}^{\tau^{H_\eps}(0,u_2)}e^{\pi^2 u/2}\int_0^{H_\eps(u_2)/2}(z^2+1)e^{-\rho z}\omega_u\left(1-\frac{x}{H_\eps(0)},\frac{z}{H_\eps(h(u))}\right)dzdu\\
&\hspace{0.2in}+H_\eps(0)\int_{\tau^{H_\eps}(0,u_4)}^{\tau^{H_\eps}(0,u_2)}e^{\pi^2 u/2}\int_{H_\eps(u_2)/2}^{H_\eps(h(u))}(z^2+1)e^{-\rho z}\omega_u\left(1-\frac{x}{H_\eps(0)},\frac{z}{H_\eps(h(u))}\right)dzdu\\
&\leq H_\eps(0)\int_0^{H_\eps(u_2)/2}(z^2+1)e^{-\rho z}\int_{\oldC{C_2ndH}}^{\tau^{H_\eps}(0,u_2)}e^{\pi^2u/2}\sup_{y\in [0,z/H_\eps(u_2)]}\omega_u\left(1-\frac{x}{H_\eps(0)},y\right)dudz\\
&\hspace{0.2in}+H_\eps(0)^4 e^{-H_\eps(u_2)/2}\int_{\tau^{H_\eps}(0,u_4)}^{\tau^{H_\eps}(0,u_2)}e^{\pi^2 u/2}\int_0^1\omega_u\left(1-\frac{x}{H_\eps(0)}, y\right)dydu.
\end{align*}
It follows from (\ref{lem5.1s}) and (\ref{lem5.2}) that
\begin{align*}
I_2
&\lesssim H_\eps(0)\int_0^{H_\eps(u_2)/2}(z^2+1)e^{-\rho z}\frac{z}{H_\eps(u_2)}\tau^{H_\eps}(0,u_2)\sin\left( \frac{\pi x}{H_\eps(0)}\right)\\
&\hspace{0.2in}+H_\eps(0)^4e^{-H_\eps(u_2)/2}\left(\tau^{H_\eps}(0,u_2)\sin\left(\frac{\pi x}{H_\eps(0)}\right)+\frac{x}{H_\eps(0)}\right).
\end{align*}
We observe that after the change of variable $v=F^{-1}(u)$,
\begin{align}\label{maxtau}
\tau^{H_\eps}(0,u_2)\leq \tau^{K_\eps}(0,u_2)\leq \eps^{-1/2}\int_0^\infty\frac{1}{F^{-1}(u)^2}du=\eps^{-1/2}\int_0^{\infty}\frac{1}{v^2+\omega^2}dv = \frac{\pi}{2 \omega} \eps^{-1/2}.
\end{align}
Using also the fact that $H_\eps(u_2)\asymp \eps^{-1/2}$, we have for $x=H_\eps(0)-O(1)$
\begin{equation}\label{RHI2}
I_2\lesssim H_\eps(0)\sin\left(\frac{\pi x}{H_\eps(0)}\right)\int_0^\infty (z^2+1)ze^{-\rho z}dz+o(1)\lesssim 1.
\end{equation}

Finally, the lemma follows from (\ref{I1int}), (\ref{RHI1}) and (\ref{RHI2}).
\qedwhite
\\

\begin{proof}[Proof of Proposition \ref{prop-2}]
Suppose (\ref{limtzinf}) holds. By Lemmas \ref{lemRH1stlb} and \ref{lemRH2nd}, and the Cauchy-Schwarz inequality,  we have for $x=H_\eps(0)-1$ and $\eps$ sufficiently small,
\[
P_{x}^{H_{\eps}}\left(R_t^{H_\eps}(u_1, u_2)>0\right) \geq \frac{(E_x^{H_\eps}[R_t^{H_\eps}(u_1,u_2)])^2}{E_x^{H_\eps}[R_t^{H_\eps}(u_1,u_2)^2]}\gtrsim 1
\]
and the proposition follows.
\end{proof}

\section{Proofs of main results}

\subsection{Proof of Proposition \ref{propLzt}}\label{pfpropLsurviv}
In this section, we prove Proposition \ref{propLzt}. We first show that if a particle starts from $L_\eps^*(t)$ at time $0$, the probability that at least one descendant of this particle will survive until time $t$ is bounded from below. Next, we can find a large constant $\oldC{C_th2}$ such that if the process starts from a single particle at $L_\eps^*(t)+\oldC{C_th2}$, there are a large number of offspring with positions above $L_\eps^*(t)$ after $O(1)$ time. Each of them will have at least one descendant alive until time $t$ with some nonzero probability. Taking the number of particles into consideration, the process will survive until time $t$ with very high probability and equation (\ref{Lztub}) follows. A crucial step in the proof of equation (\ref{Lztub}) is the following lemma, whose statement and proof are very similar to Proposition 20 in \cite{bbs14}.

\begin{Lemma}\label{LsurvivSub}
Suppose (\ref{limtz}) holds. Consider the process which starts from a single particle at $x=K_\eps(0)=L_{\eps}^*(t)$. There exist positive constants $\newA\label{A_4}$ and $\newC\label{C_25}$ that only depend on $\oldC{C_shorttime}$ such that for sufficiently small $\eps$, if $t\geq \oldA{A_4}$, then the probability that there are particles alive at time $t$ is bounded below by $\oldC{C_25}$.
\end{Lemma}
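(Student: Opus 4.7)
The proof adapts the strategy of Proposition~20 of \cite{bbs14}, which establishes the analogous statement in the critical case $\eps=0$. The general idea is to iteratively convert the survival problem into a succession of right--boundary crossings governed by the moment estimates of Section~\ref{numberhitting}.

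\textbf{Step 1 (Moving off the upper curve).} The initial particle is located exactly on $K_\eps(\cdot)$, so Corollary~\ref{cor17} cannot be applied directly. I would first argue that for a fixed small $\delta_0>0$ and a suitable constant $C_0$, with probability bounded below by some $p_1>0$ the original process has, at time $\delta_0$, at least one descendant in the window $[K_\eps(\delta_0)-C_0,K_\eps(\delta_0)-1]$. Since $|K_\eps'(0)|=\eps+\omega^2/K_\eps(0)^2$ is bounded by \eqref{L'}, we have $K_\eps(\delta_0)=K_\eps(0)+O(1)$, and the claim reduces to a standard estimate on Brownian motion with drift $-\rho$ over a bounded time interval, together with the fact that branching occurs at rate~$1$.

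\textbf{Step 2 (Producing a boundary crossing).} Given a particle at time $s$ located at $y\in[K_\eps(s)-C_0,K_\eps(s)-1]$ with $t-s\geq A_{\alpha,\beta}$, the Markov property combined with the time shift $s\mapsto 0$ reduces the question of producing a descendant that hits $K_\eps(\cdot)$ during $[s+\alpha(t-s),s+\beta(t-s)]$ to an application of Corollary~\ref{cor17}. For any such $y$ one has
\[
K_\eps(s)\sin\!\left(\frac{\pi y}{K_\eps(s)}\right)e^{\rho(y-K_\eps(s))}\asymp 1,
\]
so Corollary~\ref{cor17} yields a crossing with probability at least some $p_2>0$. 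At the crossing time $s'$, the crossing particle sits at $K_\eps(s')=L^*_\eps(t-s')$, and Step~1 (re-applied at time $s'$) provides, with probability at least $p_1$, a fresh descendant in $[K_\eps(s'+\delta_0)-C_0,K_\eps(s'+\delta_0)-1]$.

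\textbf{Step 3 (Coupling with a supercritical branching process).} Steps~1 and~2 alone are not sufficient, since naive iteration over the $O(\log t)$ scales needed to bring the remaining time down to $\oldA{A_4}$ would give a probability decaying to zero. To prevent this, I would couple the process of good descendants across successive hitting events with a Galton--Watson tree whose offspring distribution has mean strictly greater than one. The inputs are the first moment bound from Lemma~\ref{lem15lb} and the second moment bound from Lemma~\ref{lem16}, which together (via a Paley--Zygmund-type argument, together with an appropriate choice of $\alpha$, $\beta$ and of the good window $[K_\eps(s)-C_0,K_\eps(s)-1]$) produce an offspring distribution whose mean exceeds~$1$ and whose variance is bounded. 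The survival probability of the Galton--Watson tree is then strictly positive, and each realization in which the tree survives corresponds to a realization of the BBM with a particle alive at time~$t$.

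\textbf{Main obstacle.} The key technical difficulty is Step~3: extracting from the moment estimates of Section~\ref{numberhitting} a coupling with a \emph{supercritical} rather than merely critical branching process, since the latter is all that follows from the straight probability bound $P(R>0)\geq \oldC{C_18}$ of Corollary~\ref{cor17}. This requires careful tuning of the parameters $\alpha,\beta$ and of the width of the good window, and using the second moment estimate of Lemma~\ref{lem16} to control higher-generation descendants, so that the induction over scales does not deteriorate.
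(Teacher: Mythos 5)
Your outline correctly identifies the overall architecture — a renewal of boundary-crossing events coupled to a Galton–Watson process, using Corollary~\ref{cor17} for the crossing probabilities — and you rightly flag the supercriticality of the coupled GW process as \emph{the} crucial point. But you have not actually supplied the mechanism that makes the coupling supercritical, and the route you sketch cannot produce it.

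In your Step 1 you produce, at time $\delta_0$, a \emph{single} descendant in a window $[K_\eps(\delta_0)-C_0, K_\eps(\delta_0)-1]$ at bounded distance from the boundary, and in Step 3 you hope that Paley--Zygmund applied to this one particle, plus tuning of $\alpha,\beta,C_0$, will yield an offspring mean exceeding one. That cannot work: the offspring count from one particle is $\mathds{1}\{R>0\}$, so its mean is a probability and in particular is $\leq 1$ (and strictly so, since a particle an $O(1)$ distance from the curve has extinction probability bounded away from zero). The first/second moment estimates of Lemmas~\ref{lem15lb} and~\ref{lem16} give $P(R>0)\asymp 1$, not $P(R>0)>1$; there is no parameter choice that pushes a probability above one.

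The missing idea in the paper is to allow the descendants of the crossing particle to \emph{drop a large, fixed distance} $y$ below the boundary before re-attempting a crossing, and to take \emph{all} such descendants as potential parents, not just one. Coupling with critical BBM and killing at level $-y$, the number $N_\xi(y)$ of particles killed at $-y$ before time $\xi$ satisfies Neveu's theorem \eqref{neveu}: $ye^{-\sqrt2 y}N(y)\to W$ a.s., with $E[W]=\infty$. On the other hand, by Corollary~\ref{cor17} each such descendant at distance $\approx y$ from the boundary has crossing probability $\asymp y e^{-\sqrt{2}y}$ (the $\sin$ and $e^{\rho(\cdot)}$ factors cancel precisely to this). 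Hence the offspring mean is $\asymp E\big[y e^{-\sqrt2 y}\,(N_\xi(y)\wedge M)\big]$, which can be made $\geq 2$ by choosing $y$, $\xi$, $M$ large via \eqref{neveu}. This is what your approach lacks: you need the boundary-crossing particle to spawn \emph{many} independent trials, with the multiplicity boosted by Neveu's derivative-martingale limit; a single particle in a bounded window cannot produce an offspring distribution with mean above one, no matter how the moment estimates are tuned.
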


\begin{proof}
We claim that it is sufficient to show that there exist positive constants $\oldA{A_4}$ and $\newC\label{C_f}$ that only depend on $\oldC{C_shorttime}$ such that if $t\geq \oldA{A_4}$, then for $\eps$ sufficiently small, the probability that there are particles hitting $K_{\eps}(s)$ for some $s\in [t-\oldA{A_4}, t-\oldA{A_4}/3]$ is bounded below by $\oldC{C_f}$.  To see this, note that a particle that hits $K_{\eps}(s)$ for some $s\in [t-\oldA{A_4}, t-\oldA{A_4}/3]$ has a probability that is bounded away from zero of surviving until time $t$.

We prove this claim by relating the probability that there are particles hitting $K_{\eps}(s)$ for $s\in [t-\oldA{A_4}, t-\oldA{A_4}/3]$ in the slightly subcritical BBM with absorption to the survival probability of a time-inhomogeneous Galton-Watson process. To begin with, we set up constants that will be used in the construction of the Galton-Watson process. Set $\alpha=1/3$ and $\beta=2/3$. Let $A_{1/3,2/3}$ be the constant defined in (\ref{Aalphabeta}) with $\alpha=1/3$ and $\beta=2/3$, and let $\oldC{C_18}$ be the constant defined in Corollary ~\ref{cor17}. For every $y> 0$, consider BBM started with a single particle at the origin where particles move as Brownian motion with drift $-\sqrt{2}$ and are absorbed at the level $-y$. We denote by $N(y)$ the number of particles that are killed upon hitting $-y$. We further denote by $N_{\xi}(y)$ the number of particles that are absorbed at the level $-y$ before time $\xi$. According to equation (5.4) in \cite{neveu}, there exists an almost surely positive random variable $W$ with an infinite expectation such that almost surely
\begin{equation}\label{neveu}
\lim_{y\rightarrow\infty} ye^{-\sqrt{2}y}N(y)=W.
\end{equation}
According to equation (\ref{neveu}), we can choose positive constants $y$, $\newx\label{xi}$ and $M$ large enough that
\begin{equation}\label{consty}
y>\frac{2^{5/3}\omega^2}{c^2}
\end{equation}
and
\begin{equation}\label{AKM}
E\left[ye^{-\sqrt{2} y}(N_{\oldx{xi}}(y)\wedge M)\right]\geq \frac{4}{\oldC{C_18}}.
\end{equation}
Finally, we choose a positive constant $\oldA{A_4}$ large enough that
\begin{equation}\label{Axiy}
\oldA{A_4}> \max\left\{\frac{y^3}{c^3}, \oldx{xi}+A_{1/3,2/3}, \oldx{xi}^{3/2}+\oldx{xi}, \oldx{xi}+\left(\frac{2y}{c}\right)^3\right\}.
\end{equation}

Now we construct the Galton-Watson process. We use a sequence of sets $(T_n)_{n\in \mathbb{N}}$ to record the times at which particles hit the right boundary $K_{\eps}(\cdot)$ in each generation of the branching process. We define $(T_n)_{n\in \mathbb{N}}$  inductively. Let $T_0=\{0\}$.  In the $n$-th generation of the process, we have $T_n=\{t_{n,1}, t_{n,2},..., t_{n,m_n}\}$, which means there are particles hitting $K_{\eps}(t_{n,i})$ at time $t_{n,i}$ for $i=1,...,m_n$. For $i=1,...,m_n$, if $t_{n,i}\geq t-\oldA{A_4}$, then $t_{n,i}\in T_{n+1}$. Otherwise, we keep track of the descendants of the particle that hits $K_{\eps}(t_{n,i})$ at time $t_{n,i}$ until either time $t_{n,i}+\oldx{xi}$, or until particles hit $K_{\eps}(t_{n,i})-y-\eps (r-t_{n,i})$ for some $t_{n,i}<r<t_{n,i}+\oldx{xi}$.  Note that for all $t_{n,i}<r<t_{n,i}+\oldx{xi}$, by (\ref{F-1asymp3/5}) and (\ref{Axiy}), we have for $\eps$ sufficiently small,
\[
K_{\eps}(t_{n,i})-y-\eps (r-t_{n,i})\geq K_{\eps}\left(t-\oldA{A_4}\right)-y-\eps \oldx{xi}\geq c\oldA{A_4}^{1/3}-y-\eps \oldx{xi}> 0.
\]
We denote by $l_{n,i}$ the number of times at which descendants of the $i$-th particle hit the curve $K_{\eps}(t_{n,i})-y-\eps(\cdot-t_{n,i})$ before time $t_{n,i}+\oldx{xi}$, and we denote by $(r_{n,i,j})_{j=1,..., l_{n,i}}$ the corresponding sequence of times. For $j=1,...,l_{n,i}\wedge M$, if one of the descendants of the particles that hits the curve at time $r_{n,i,j}$ proceeds to hit the curve $K_{\eps}(\cdot)$ at some time $v\in [r_{n,i,j}+(t-r_{n,i,j})/3,r_{n,i,j}+2(t-r_{n,i,j})/3]$ afterwards, we put the smallest $v$ at which this happens into the set $T_{n+1}$. For every $n$, define $Z_n=|T_n|$ to be the number of elements in the set $T_n$.

Next, we are going to control the first and second moments of $Z_1$. When $n=0,$ we see that $m_n=1$ and $t_{0,1}=0$. For simplicity, we omit the first and second indices in the subscript.  That is, we write $r_j=r_{0,1,j}$ and $l=l_{0,1}$. For $j=1,..., l\wedge M$, let $I_j$ be the indicator of the event that the $j$-th particle that reaches $K_{\eps}(0)-y-\eps r_j$ at time $r_j<\oldx{xi}$ has at least one descendant that hits the curve $K_{\eps}(\cdot)$ between times $r_j+(t-r_j)/3$ and $r_j+2(t-r_j)/3$. Then we have
\begin{equation}\label{Zxi}
Z_1=\sum_{j=1}^{l\wedge M} I_j.
\end{equation}
We can compute the expectation of $I_j$ using estimates in Section \ref{numberhitting}. Define $\mathcal{G}$ to be the $\sigma$-field generated by  $(r_{j})_{j=1,..., l\wedge M}$.  Recalling that $x = K_{\eps}(0)$, we have
\[
E_{x}(I_j|\mathcal{G}) = P_{x-y-\eps r_j}^{K_{\eps}}\left(R_{t-r_j}\left(\frac{t-r_j}{3},\frac{2(t-r_j)}{3}\right)>0\right)
\]
and $t-r_j\geq \oldA{A_4}-\oldx{xi}>A_{1/3,2/3}$ by (\ref{Axiy}).
It follows from Corollary \ref{cor17} that for $x=K_\eps(0)$, $t\geq \oldA{A_4}$ and $\eps$ sufficiently small,
\begin{align}\label{Ih}
\oldC{C_18} K_{\eps}(r_j)&\sin\left(\frac{\pi (x-y-\eps r_j)}{K_{\eps}(r_j)}\right)e^{\rho(x-\eps r_j-K_{\eps}(r_j))-\eps y-\sqrt{2}y}\nonumber\\
&\leq E_{x}(I_j|\mathcal{G})\nonumber\\
&\hspace{0.5in}\leq \oldC{C_19}K_{\eps}(r_j)\sin\left(\frac{\pi (x-y-\eps r_j)}{K_{\eps}(r_j)}\right)e^{\rho(x-\eps r_j-K_{\eps}(r_j))-\eps y-\sqrt{2}y}.
\end{align}
To evaluate the upper and lower bounds in (\ref{Ih}), we first observe that the function $g(r)=x-\eps r-K_{\eps}(r)$ is an increasing function of $r$. Therefore, if $\eps$ is sufficiently small, then for all $r_j\leq \oldx{xi}$,
\begin{equation}\label{hexplb}
e^{\rho (x-\eps r_j-K_{\eps}(r_j))-\eps y}\geq e^{\rho g(0) - \eps y}\geq \frac{1}{2}.
\end{equation}
For the upper bound, by (\ref{L'}), (\ref{mvt}),  (\ref{F-1asymp3/5}) and (\ref{Axiy}), if $\eps$ is sufficiently small, then for all $r_j\leq \oldx{xi}$,
\begin{equation}\label{ubgr}
g(r_j) \leq g(\oldx{xi})\leq (L_\eps^*)'(t-\oldx{xi})\oldx{xi}-\eps \oldx{xi}
\leq \eps\oldx{xi} \frac{\omega^2}{F^{-1}(\eps^{3/2}(\oldA{A_4}-\oldx{xi}))^2}
\leq \frac{\omega^2\oldx{xi}}{c^2(\oldA{A_4}-\oldx{xi})^{2/3}}\leq \frac{\omega^2}{c^2}.
\end{equation}
Thus if $\eps$ is sufficiently small, then for all $r_j\leq \oldx{xi}$,
\begin{equation}\label{hexpub}
e^{\rho (x-\eps r_j-K_{\eps}(r_j))-\eps y}\leq e^{\omega^2/c^2}.
\end{equation}
Next, we obtain upper and lower bounds on the sine function in (\ref{Ih}). Since the function $g(r)$ is increasing, we get
\begin{equation}\label{hsinub}
\sin\left(\frac{\pi (x-y-\eps r_j)}{K_{\eps}(r_j)}\right)=\sin\left(\frac{\pi (y-g(r_j))}{K_{\eps}(r_j)}\right)\leq \frac{\pi (y-g(r_j))}{K_{\eps}(r_j)}\leq \frac{\pi (y-g(0))}{K_{\eps}(r_j)}=\frac{\pi y}{K_{\eps}(r_j)}.
\end{equation}
For the other direction, we first claim that $0\leq \pi(y-g(r_j))/K_\eps(r_j)\leq \pi/2$. Indeed, by (\ref{consty}) and (\ref{ubgr}), we have for all $r_j\leq \oldx{xi}$ and $\eps$ sufficiently small,
\begin{equation}\label{sin>0}
\frac{\pi (y-g(r_j))}{K_{\eps}(r_j)}\geq  \frac{\pi y}{2K_{\eps}(r_j)}\geq 0.
\end{equation}
Moreover, by (\ref{F-1asymp3/5}) and (\ref{Axiy}), we have for all $r_j\leq \oldx{xi}$ and $\eps$ sufficiently small
\[
\frac{\pi(y-g(r_j))}{K_{\eps}(r_j)}\leq \frac{\pi y}{K_{\eps}(\oldx{xi})}\leq \frac{\pi y}{c(\oldA{A_4}-\oldx{xi})^{1/3}}\leq \frac{\pi}{2}.
\]
From the fact that $\sin(x)\geq 2x/\pi$ for $0\leq x\leq \pi/2$ and equation (\ref{sin>0}), we have for all $r_j\leq \oldx{xi}$ and $\eps$ sufficiently small,
\begin{equation}\label{hsinub2}
\sin\left(\frac{\pi (x-y-\eps r_j)}{K_{\eps}(r_j)}\right)=\sin\left(\frac{\pi (y-g(r_j))}{K_{\eps}(r_j)}\right)\geq \frac{y}{K_{\eps}(r_j)}.
\end{equation}
It follows from equations (\ref{Ih}), (\ref{hexplb}), (\ref{hexpub}), (\ref{hsinub}), and (\ref{hsinub2}) that for all $j=1,..., l$ and $\eps$ sufficiently small
\begin{align}\label{Ihy}
\frac{\oldC{C_18}}{2}ye^{-\sqrt{2}y}\leq &\;E_{x}[I_j|\mathcal{G}]\leq \oldC{C_19}\pi e^{\omega^2/c^2}ye^{-\sqrt{2}y}.
\end{align}
Observe that $l$ has the same distribution as $N_{\oldx{xi}}(y)$. As a result, by (\ref{AKM}), (\ref{Zxi}) and (\ref{Ihy}), we get for $t\geq \oldA{A_4}$ and $\eps$ sufficiently small,
\begin{equation}\label{firstmomZ}
E_{x}[Z_1]\geq \frac{\oldC{C_18}}{2}E\left[ye^{-\sqrt{2}y}(N_{\oldx{xi}}(y)\wedge M)\right]\geq 2.
\end{equation}
For the second moment, by the definition of $Z_1$, we get for all $\eps$,
\begin{equation}\label{secondmomZ}
E_{x}[Z_1^2]\leq M^2.
\end{equation}

For $\oldA{A_4}\leq t\lesssim \eps^{-3/2}$, consider a Galton-Watson process with the offspring distribution
\[
p_t(k)=P(Z_1=k).
\]
Let $q_{t,*}$ be the extinction probability of this process and $q_*=\sup \{q_{t,*}: A_1 \leq  t\leq 2\oldC{C_shorttime} \eps^{-3/2}\}$. According to Lemma 19 in \cite{bbs14}, by (\ref{firstmomZ}) and (\ref{secondmomZ}), we have for all $t\geq \oldA{A_4}$ and $\eps$ sufficiently small,
\[
1-q_{t,*}\geq \frac{2(E_{x}[Z_1]-1)}{E_{x}[Z_1(Z_1-1)]}\geq \frac{2}{M^2},
\]
which gives that
\begin{equation}\label{1-qstar}
1-q_{*}\geq \frac{2}{M^2}.
\end{equation}
Finally, we connect the supremum extinction probability $q_{*}$ with the probability that there are particles hitting $K_{\eps}(s)$ for some $s\in [t-\oldA{A_4}, t-\oldA{A_4}/3]$. Define
\[
q_{t}=\lim_{n\rightarrow\infty}P(T_n=\emptyset).
\]
Note that $1-q_t$ is indeed the probability that there are particles reaching $K_{\eps}(s)$ for some $s\in [t-\oldA{A_4}, t-\oldA{A_4}/3]$. It was shown in the proof of Proposition 20 in \cite{bbs14} that
\begin{equation}\label{qtstar}
q_t\leq q_{*}.
\end{equation}
Equations (\ref{1-qstar}) and (\ref{qtstar}) imply that for $t\geq \oldA{A_4}$ and $\eps$ sufficiently small, the probability that there are particles hitting $K_{\eps}(s)$ for some $s\in [t-\oldA{A_4}, t-\oldA{A_4}/3]$ is bounded below by $2/M^2$ and the lemma follows.
\end{proof}

Now we have all the ingredients to prove Proposition \ref{propLzt}.

\begin{proof}[Proof of Proposition \ref{propLzt}]
If $t< \oldA{A_4}$, then equation (\ref{Lztub}) is obvious. We only consider the case when $t\geq \oldA{A_4}$. Let $\delta>0$. Choose a positive integer $\newC\label{C_8}$ large enough that
\begin{equation}\label{deltaub1}
(1-\oldC{C_25})^{\oldC{C_8}}<\frac{\delta}{2}.
\end{equation}
Consider BBM started with a single particle at the origin where particles move as Brownian motion with drift $-\sqrt{2}$ and are absorbed at the level $-y$. Recall that $N_{\xi}(y)$ denotes the number of particles that are absorbed at $-y$ before time $\xi$. According to (\ref{neveu}), we can choose positive constants $\newx\label{xi_4}$ and $\oldC{C_th2}$ large enough that for $\eps$ sufficiently small
\begin{equation}\label{slightneveu}
P\left(N_{\oldx{xi_4}}(\oldC{C_th2}-\eps \oldx{xi_4})<\oldC{C_8}\right)<\frac{\delta}{2}.
\end{equation}

We denote by $\mathcal{N}_t^{\rho}$ the set of surviving particles at time $t$ and $\{X_u^{\rho}(t), u\in\mathcal{N}_t^{\rho}\}$ the set of positions of particles at time $t$ for BBM with absorption and drift $-\rho$. One can couple the subcritical process with drift $\rho>\sqrt{2}$ with the critical process such that for every $u\in\mathcal{N}_t^{\rho}$, there exists $v\in\mathcal{N}_t^{\sqrt{2}}$ satisfying $X_u^{\rho}(s)=X_v^{\sqrt{2}}(s)-\eps s$ for all $0\leq s\leq t$,  and $$\mathcal{N}_t^{\rho}=\{u\in\mathcal{N}_t^{\sqrt{2}}, X_u^{\sqrt{2}}(s)-\eps s> 0 \mbox{ for all } s\leq t\}.$$ Equation (\ref{slightneveu}) implies that for every $\eps$, if the subcritical BBM with absorption starts with a single particle at $L_\eps^*(t)+\oldC{C_th2}$, then with probability at least $1-\delta/2$, the number of particles hitting $L_{\eps}^*(t) + \eps(\xi_2 - s)$ for some $s \in [0, \xi_2]$ is at least $\oldC{C_8}$. By Lemma \ref{LsurvivSub}, each of them has probability at least $\oldC{C_25}$ of surviving until time $t$. Combining Lemma \ref{LsurvivSub} with equations (\ref{deltaub1}) and (\ref{slightneveu}), if $t\geq \oldA{A_4}$, we have for $\eps$ sufficiently small,
\[
P_{L_\eps^*(t)+\oldC{C_th2}}\left(\zeta<t\right)\leq (1-\oldC{C_25})^{\oldC{C_8}}+P\left(N_{\oldx{xi_4}}(\oldC{C_th2}-\eps \oldx{xi_4})<\oldC{C_8}\right)<\delta.
\]
and equation (\ref{Lztub}) follows.
\end{proof}

\subsection{Proof of Theorem \ref{thmLzt}}

\begin{proof}[Proof of equation (\ref{Lztlb})]
By Lemma \ref{LepsLbar}, it is enough to prove the result with $L_{\eps}(t)$ in place of $\bar{L}_{\eps}(t)$.
Choose a constant $\oldC{C_a}>0$. If $t\lesssim 1$, then $\bar{L}_{\eps}(t) \lesssim 1$, and so equation (\ref{Lztlb}) is obvious. It is therefore enough to consider the case when $t> \oldC{C_a}$. We first consider the case when $\oldC{C_shorttime}\eps^{-3/2}\leq  t \leq \oldC{C_mainub} \eps^{-2}$.  Recall that $\oldC{C_2ndH} < 1/(8\oldC{C_mainub})$. Let
\[
u_4=\oldC{C_2ndH}H_\eps(0)^2, \qquad u_5= t-\frac{\oldC{C_shorttime}}{2}\eps^{-3/2}, \qquad u_6=t-\oldC{C_a},
\]
noting that the definition of $u_4$ was previously given in (\ref{u4}).
Using (\ref{u4}), we have for $\eps$ sufficiently small,
\begin{equation}\label{thmCb}
u_4<\frac{t}{2}\leq u_5<u_6.
\end{equation}
We divide the particles alive at time $t$ into five subsets according to the time at which particles hit the curve $H_\eps(\cdot)$ or $K_\eps(\cdot)$. Recall that we denote by $\mathcal{N}_t^{\rho}$ the set of surviving particles at time $t$ and $\{X_u^{\rho}(t), u\in\mathcal{N}_t^{\rho}\}$ the set of positions of particles at time $t$.
By \eqref{LL*1}, there exists a constant $\newC\label{C_claim}$ such that
\begin{equation}\label{claim1}
H_\eps(u_5)-\oldC{C_claim}\leq K_\eps(u_5)
\end{equation}
Define
\begin{align*}
D_1&=\{\exists u\in\mathcal{N}_t^{\rho}: X_u^{\rho}(s)\geq H_\eps(s) \mbox{ for some } s\in [0,u_4]\},\\
D_2&=\{\exists u\in\mathcal{N}_t^{\rho}\setminus D_1: X_u^{\rho}(s)\geq H_\eps(s) \mbox{ for some } s\in (u_4, u_5]\},\\
D_3&=\{\exists u\in\mathcal{N}_t^{\rho}\setminus (D_1\cup D_2): X_u^{\rho}(u_5)\geq H_\eps(u_5)-\oldC{C_claim}\},\\
D_4&=\{\exists u\in\mathcal{N}_t^{\rho}\setminus(D_1\cup D_2\cup D_3): X_u^{\rho}(s)\geq K_\eps(s)\mbox{ for some } s\in (u_5,u_6]\}, \\
D_5&=\{\mathcal{N}_t^{\rho} \neq \emptyset\}\setminus(D_1\cup D_2\cup D_3\cup D_4).
\end{align*}
We observe that
\begin{align}\label{Lztlbsum}
&P_{L_\eps(t)-\oldC{C_main1}}(\zeta>t)\nonumber\\
&\hspace{0.1in}\leq \sum_{i=1}^5 P_{L_\eps(t)-\oldC{C_main1}}(D_i)\nonumber\\
&\hspace{0.1in}\leq E^{H_\eps}_{L_\eps(t)-\oldC{C_main1}}\left[R_t^{H_\eps}(0,u_4)\right]+E^{H_\eps}_{L_\eps(t)-\oldC{C_main1}}\left[R_t^{H_\eps}(u_4,u_5)\right]+ \int_{H_\eps(u_5)-\oldC{C_claim}}^{H_\eps(u_5)}q^{H_\eps}_{0,u_5}(L_\eps(t)-\oldC{C_main1},y)dy\nonumber\\
&\hspace{0.3in}+\int_0^{H_\eps(u_5)-\oldC{C_claim}}q_{0,u_5}^{H_\eps}(L_\eps(t)-\oldC{C_main1},y)E_{u_5,y}^{K_\eps}\left[R_{t}^{K_\eps}(u_5, u_6)\right]dy
+ P_{L_\eps(t)-\oldC{C_main1}}(D_5) \nonumber\\
&\hspace{0.1in}=: I_1+I_2+I_3+I_4+I_5.
\end{align}

We first estimate $I_1$. By Lemma \ref{lemRH1st} and the fact that $\sin(\pi x)=\sin(\pi(1-x))\leq \pi (1-x)$ for $0\leq x\leq 1$, there exists a positive constant $\newC\label{C_e}$ such that
\[
I_1\leq \oldC{C_e}e^{-\rho \oldC{C_main1}}\frac{H_\eps(0)}{H_\eps(u_4)}\left(\tau^{H_\eps}(0,u_4)\frac{\pi \oldC{C_main1}}{H_\eps(0)}+1\right).
\]
By Lemma \ref{newtau(r,s)}, we see that $\tau^{H_\eps}(0,u_4)\leq H_\eps(0)/\omega^2$ and thus
\[
I_1\leq \oldC{C_e}e^{-\rho \oldC{C_main1}}\left(\frac{\pi\oldC{C_main1}}{\omega^2}+1\right)\frac{H_\eps(0)}{H_\eps(u_4)}.
\]
If $t \geq 2u^*\eps^{-3/2}$, then by (\ref{L_eps<2epst}) and (\ref{thmCb}), we have for $\eps$ sufficiently small,
\[
\frac{H_\eps(0)}{H_\eps(u_4)}\leq \frac{2\eps t}{\eps t/2}=4.
\]
If $t \asymp \eps^{-3/2}$, then by (\ref{L>L*}) and (\ref{LL*1}), we have $H_{\eps}(0) \asymp \eps^{-1/2}$ and thus $u_4 \asymp \eps^{-1}$, which implies that $H_{\eps}(0)/H_{\eps}(u_4) = L_{\eps}(t)/L_{\eps}(t - u_4) \rightarrow 1$ as $\eps \rightarrow 0$.  Therefore, for $\eps$ sufficiently small,
\begin{equation}\label{thmI1}
I_1\leq 4\oldC{C_e}e^{-\rho \oldC{C_main1}}\left(\frac{\pi\oldC{C_main1}}{\omega^2}+1\right).
\end{equation}

For $I_2$, note that by reasoning as in (\ref{taulb}), we get $\tau^{H_{\eps}}(0, u_4) \geq \oldC{C_2ndH}$.  Therefore,
it follows from Lemma \ref{lemRH1st} along with (\ref{L>L*}) and (\ref{maxtau}) that for $\eps$ sufficiently small,
\begin{align}\label{thmI2}
I_2
&\leq \oldC{C_e}e^{-\rho \oldC{C_main1}}\frac{H_\eps(0)}{H_\eps(u_5)}\left(\tau^{H_\eps}(u_4,u_5)+J_{\tau^{H_\eps}(0,u_4)}\right)\sin\left(\frac{\pi \oldC{C_main1}}{H_\eps(0)}\right)\nonumber\\
&\leq \pi\oldC{C_e}\oldC{C_main1}e^{-\rho \oldC{C_main1}}\frac{1}{K_\eps(u_5)}\left(\tau^{H_\eps}(u_4,u_5)+J_{\oldC{C_2ndH}}\right)\nonumber\\
&\leq  \pi\oldC{C_e}\oldC{C_main1}e^{-\rho \oldC{C_main1}}\frac{1}{\eps^{-1/2}F^{-1}(\oldC{C_shorttime}/2)}\left(\frac{\pi}{2\omega}\eps^{-1/2}+J_{\oldC{C_2ndH}}\right)\nonumber\\
&\leq \frac{\pi^2\oldC{C_e}\oldC{C_main1}}{\omega F^{-1}(\oldC{C_shorttime}/2)}e^{-\rho \oldC{C_main1}}.
\end{align}

For $I_3$, we apply Lemma \ref{densityvH}. Since $\eps H_\eps(0)\lesssim 1$ by (\ref{L_eps<2epst}) and $H_\eps(u_5)\asymp \eps^{-1/2}$, we have for $\eps$ sufficiently small,
\begin{align}\label{thmI3}
I_3
&\lesssim\frac{H_\eps(0)}{H_\eps(u_5)^2}e^{\rho(L_\eps(t)-\oldC{C_main1})-\sqrt{2}(H_\eps(0)-H_\eps(u_5))} \sin\left(\frac{\pi \oldC{C_main1}}{H_\eps(0)}\right) \int_{H_\eps(u_5)-\oldC{C_claim}}^{H_\eps(u_5)} e^{-\rho y}\sin\left(\frac{\pi y}{H_\eps(u_5)}\right)dy\nonumber\\
&\leq\frac{H_\eps(0)}{H_\eps(u_5)^2}e^{-\rho\oldC{C_main1}+\eps H_\eps(0)+\sqrt{2}H_\eps(u_5)} \frac{\pi \oldC{C_main1}}{H_\eps(0)}\oldC{C_claim}e^{-\rho (H_\eps(u_5)-\oldC{C_claim})}\sin\left(\frac{\pi \oldC{C_claim}}{H_\eps(u_5)}\right)\nonumber\\
&\ll 1.
\end{align}

For $I_4$, by (\ref{claim1}) and Lemmas \ref{densityvH} and \ref{lem15}, there exists a constant $\newC\label{C_g}$ such that for $\eps$ sufficiently small,
\begin{align*}
I_4
&\leq \oldC{C_g}\int_0^{K_\eps(u_5)}\frac{H_\eps(0)}{H_\eps(u_5)^2}e^{\rho(L_\eps(t)-\oldC{C_main1}-y)-\sqrt{2}(H_\eps(0)-H_\eps(u_5))}\sin\left(\frac{\pi \oldC{C_main1}}{H_\eps(0)}\right)\sin\left(\frac{\pi y}{H_\eps(u_5)}\right)\\
&\hspace{0.2in}\times e^{\rho(y-K_\eps(u_5))}\left(\tau^{K_\eps}(u_5,u_6)\sin\left(\frac{\pi y}{K_\eps(u_5)}\right)+\frac{y}{K_\eps(u_5)}\right)dy\\
&\leq\frac{\pi\oldC{C_main1}\oldC{C_g}}{H_\eps(u_5)^2}e^{-\rho \oldC{C_main1}+\eps H_\eps(0)+\rho \oldC{C_claim}}\Bigg(\tau^{K_\eps}(u_5, u_6)\int_0^{K_\eps(u_5)}\sin\left(\frac{\pi y}{H_{\eps}(u_5)}\right)\sin\left(\frac{\pi y}{K_\eps(u_5)}\right)dy\\
&\hspace{0.2in}+\int_0^{K_\eps(u_5)}\frac{y}{K_\eps(u_5)}\sin\left(\frac{\pi y}{H_\eps(u_5)}\right)dy\Bigg) \\
&\leq \frac{\pi\oldC{C_main1}\oldC{C_g}}{H_\eps(u_5)^2}e^{-\rho \oldC{C_main1}+\eps H_\eps(0)+\rho \oldC{C_claim}}
\big( \tau^{K_{\eps}}(u_5, u_6) K_{\eps}(u_5) + K_{\eps}(u_5) \big).
\end{align*}
Note that $\eps H_{\eps}(0) = \eps L_{\eps}(t) \leq \eps L_{\eps}( C_1 \eps^{-2}) \leq 2C_1$ by (\ref{L_eps<2epst}).  Also, we have $\tau^{K_{\eps}}(u_5, u_6) \leq \frac{\pi}{2 \omega} \eps^{-1/2}$ by (\ref{maxtau}), and $H_{\eps}(u_5) \geq K_{\eps}(u_5) = \eps^{-1/2} F^{-1}(C_4/2)$ by (\ref{L>L*}).  Therefore, for $\eps$ sufficiently small,
\begin{align}\label{thmI4}
I_4 &\leq \frac{\pi\oldC{C_main1}\oldC{C_g}}{K_\eps(u_5)}e^{-\rho \oldC{C_main1}+2\oldC{C_mainub}+\rho \oldC{C_claim}}\left(\frac{\pi }{2\omega}\eps^{-1/2}+1\right)\nonumber\\
&\leq \frac{\pi^2\oldC{C_main1}\oldC{C_g}}{\omega F^{-1}(\oldC{C_shorttime}/2)}e^{-\rho \oldC{C_main1}+2\oldC{C_mainub}+\rho \oldC{C_claim}}.
\end{align}

For $I_5$, by Lemma \ref{densityvH}, there exists a constant $\newC\label{C_h}$ such that for $\eps$ sufficiently small,
\begin{align}\label{thmI5}
I_5
&\leq \int_0^{H_\eps(u_6)}q_{0,u_6}^{H_\eps}(L_\eps(t)-\oldC{C_main1},y)dy \nonumber \\
&\leq \oldC{C_h}\frac{H_\eps(0)}{H_\eps(u_6)^2}e^{\rho(L_\eps(t)-\oldC{C_main1})-\sqrt{2}(H_\eps(0)-H_\eps(u_6))}\sin\left(\frac{\pi \oldC{C_main1}}{H_\eps(0)}\right)\int_0^{H_\eps(u_6)}e^{-\rho y}\sin\left(\frac{\pi y}{H_\eps(u_6)}\right)dy \nonumber \\
&\leq \frac{\pi C_2 C_{21}}{H_{\eps}(u_6)} e^{\eps H_{\eps}(0) + \sqrt{2} H_{\eps}(u_6)} e^{-\rho C_2}.
\end{align}
Because $\eps H_\eps(0)\lesssim 1$ by (\ref{L_eps<2epst}) and $H_{\eps}(u_6) \asymp 1$ by (\ref{L>L*}) and (\ref{LL*1}),
it follows from (\ref{Lztlbsum})-(\ref{thmI5}) that for every $\delta>0$, we can choose $\oldC{C_main1}$ large enough so that (\ref{Lztlb}) holds for $\eps$ sufficiently small.

We next consider the case when $\oldC{C_a}< t\leq\oldC{C_shorttime}\eps^{-3/2}$.  By Markov's inequality, the probability that the process survives until time $t$ is bounded above by the sum of the expected number of particles hitting $K_\eps(\cdot)$ before time $u_6$ and the expected number of particles alive at time $u_6$.  We thus have
\begin{equation}\label{Lztlbsumshort}
P_{L_\eps(t)-\oldC{C_main1}}(\zeta>t)\leq E^{K_\eps}_{L_\eps(t)-\oldC{C_main1}}\left[R_t^{K_\eps}(0,u_6)\right]+\int_0^{H_\eps(u_6)}q^{H_\eps}_{0,u_6}\left(L_\eps(t)-\oldC{C_main1},y\right)dy.
\end{equation}
It follows from (\ref{LL*1}) that there is a positive constant $\newC\label{C_newv8}$ such that when $\oldC{C_a}< t\leq\oldC{C_shorttime}\eps^{-3/2}$, we have $L_{\eps}(t) - L_{\eps}^*(t) \leq \oldC{C_newv8}$.  We may choose $\oldC{C_main1}>2\oldC{C_newv8}$. Then  $L_\eps(t)-\oldC{C_main1}<K_\eps(0)-\oldC{C_main1}/2$.  By Lemmas \ref{tau(r,s)} and \ref{lem15}, for sufficiently small $\eps$,
\begin{align}\label{Lztlbsum1}
E^{K_\eps}_{L_\eps(t)-\oldC{C_main1}}\left[R_t^{K_\eps}(0,u_6)\right] &\leq
e^{\rho(L_{\eps}(t) - C_2 - K_{\eps}(0))} \left( \tau^{K_{\eps}}(0, u_6) \sin \left(\frac{\pi(L_{\eps}(t) - C_2)}{K_{\eps}(0)} \right) + \frac{L_{\eps}(t) - C_2}{K_{\eps}(0)} \right) \nonumber \\
&\leq e^{-\rho C_2/2} \left( \frac{\pi C_2}{\omega^2} + 1 \right).
\end{align}
Because $u_6 \gtrsim H_{\eps}(0)^2$, the second term in (\ref{Lztlbsumshort}) can be bounded as in (\ref{thmI5}).
Therefore, for any $\delta>0$, we can choose $\oldC{C_main1}$ large enough that (\ref{Lztlb}) holds for all $\eps$ sufficiently small.

Finally, we prove equation (\ref{Lztlb}) for all $t$ satisfying (\ref{tles}). Suppose (\ref{Lztlb}) does not hold true. Then there exists $\delta>0$ such that for all $n\in \mathbb{N}$, we can find $0<\eps_n<1$ with
\begin{equation}\label{contradict}
P_{L_{\eps_n}(t_{\eps_n})-n}(\zeta>t_{\eps_n})\geq \delta.
\end{equation}
Since $0\leq \eps^{2}_{n}t_{\eps_n}\leq\oldC{C_mainub}$, we can find a subsequence $\{\eps_{n_k}\}_{k=1}^\infty$ such that $t_{\eps_{n_k}}\leq \oldC{C_a}$, or $\oldC{C_shorttime}\eps_{n_k}^{-3/2}\leq t_{\eps_{n_k}} \leq \oldC{C_mainub}\eps_{n_k}^{-2}$, or $\oldC{C_a}<t_{\eps_{n_k}}\leq \oldC{C_shorttime}\eps_{n_k}^{-3/2}$ for all $k$, and also (\ref{contradict}) holds with $n$ replaced by $n_k$. This contradicts the arguments above and  equation (\ref{Lztlb}) follows.
\end{proof}

\begin{proof}[Proof of (\ref{newLztub})]
By Lemma \ref{LepsLbar}, it is enough to prove the result with $L_{\eps}(t)$ in place of $\bar{L}_{\eps}(t)$.
When $t$ satisfies (\ref{limtz}), equation (\ref{newLztub}) follows from Proposition~\ref{propLzt} and equation (\ref{LL*1}). When $t$ satisfies (\ref{limtzinf}), we choose a positive integer $\newC\label{C_last}$ large enough that
\[
(1-\oldC{C_longhit}\oldC{C_25})^{\oldC{C_last}}<\frac{\delta}{2}.
\]
By using the same argument as the proof of Proposition \ref{propLzt}, we could choose constants $\oldC{C_main2}$ and $\newx\label{xi_5}$ large enough that for $\eps$ sufficiently small,
\[
P\left(N_{\oldx{xi_5}}(\oldC{C_main2}+1-\eps \oldx{xi_5})<\oldC{C_last}\right)<\frac{\delta}{2}.
\]
This implies that if the process starts with a single particle at $L_\eps(t)+\oldC{C_main2}$, then with probability at least $1-\delta/2$, the number of particles hitting $L_\eps(t) - 1 + \eps(\oldx{xi_5} - s)$ for some $s \in [0, \oldx{xi_5}]$ is greater than $\oldC{C_last}$.  By an easy coupling argument, the probability of survival until time $t$ can only decrease if we move these particles to $L_{\eps}(t) - 1$.  By Proposition \ref{prop-2} and Lemma \ref{LsurvivSub}, each of them hits the curve $H_\eps(\cdot)$ during the time $(t-\lambda\eps^{-3/2}, t-\mu\eps^{-3/2})$ with probability at least $\oldC{C_longhit}$, and once it hits the curve $H_\eps(\cdot)$ in the last $O(\eps^{-3/2})$ time, it will survive until time $t$ with probability at least $\oldC{C_25}$. Therefore, we have for $\eps$ sufficiently small,
\[
P_{L_\eps(t)+\oldC{C_main2}}(\zeta > t) \leq (1-\oldC{C_longhit}\oldC{C_25})^{\oldC{C_last}}+P\left(N_{\oldx{xi_5}}(\oldC{C_main2}+1-\eps \oldx{xi_5})<\oldC{C_last}\right)<\delta
\]
and equation (\ref{newLztub}) follows.

Finally, for all $t$ satisfying (\ref{tles}), equation (\ref{newLztub}) can be argued in the same way as equation (\ref{Lztlb}). Suppose (\ref{newLztub}) does not hold true. Then there exists $\delta>0$, such that for all $n\in \mathbb{N}$, we can find $0<\eps_n<1$ with
\begin{equation}\label{contradictub}
P_{L_{\eps_n}(t_{\eps_n})+n}(\zeta<t_{\eps_n})\geq \delta.
\end{equation}
Since $0\leq \eps^{2}_{n}t_{\eps_n}<\oldC{C_mainub}$, we can find a subsequence $\{\eps_{n_k}\}_{k=1}^\infty$ such that $t_{\eps_{n_k}}$ satisfies either (\ref{limtz}) or~(\ref{limtzinf}), and also (\ref{contradictub}) holds with $n$ replaced by $n_k$. This contradicts the proof above and  equation~(\ref{newLztub}) follows.
\end{proof}

\paragraph{Acknowledgements.}
This research was carried out in part during the Workshop \emph{Branching systems, reaction-diffusion equations and population models} organised in May 2022, we wish to thank the organizers for this conference. In particular, BM acknowledge partial support by funding from the Simons Foundation and the Centre de Recherches Mathématiques, through the Simons-CRM scholar-in-residence program.

\end{document}